 \newtheorem{thm}{Theorem}[section]
 \newtheorem{conj}[thm]{Conjecture}
 \newtheorem{prop}[thm]{Proposition}
 \theoremstyle{definition}
 \newtheorem{defn}[thm]{Definition}
 \theoremstyle{remark}
 \numberwithin{equation}{section}
\def\TSC{{\footnotesize TSS-CMC} }
\def\SC{{\footnotesize SS-CMC} }
\def\sC{{\footnotesize S-CMC} }
\def\C{{\footnotesize CMC} }
\begin{document}

%
%
%
%
%
%

\title[CMC foliation in the Schwarzschild spacetime]{Spacelike spherically symmetric CMC foliation in the extended Schwarzschild spacetime}

\author[K.-W. Lee]{Kuo-Wei Lee}
\address{Department of Mathematics, National Taiwan University, Taipei, Taiwan}
\email{d93221007@gmail.com}


\author[Y.-I. Lee]{Yng-Ing Lee}
\address{Department of Mathematics, National Taiwan University, Taipei, Taiwan\\
National Center for Theoretical Sciences, Taipei Office, National Taiwan University, Taipei, Taiwan}
\email{yilee@math.ntu.edu.tw}

\subjclass{Primary 83C15; Secondary 83C05}

\keywords{Schwarzschild spacetime, Kruskal extension, spherically symmetric, constant mean curvature, \C foliation.}

\date{\Today}

\begin{abstract}
We first summarize the characterization of smooth spacelike spherically symmetric constant mean curvature (\SC$\!\!$)
hypersurfaces in the Schwarzschild spacetime and Kruskal extension.
Then use the characterization to prove special \SC foliation property, and verify part of the conjecture by Malec and \'{O}~Murchadha in \cite{MO}.
\end{abstract}

\maketitle

\section{Introduction}
Spacelike constant mean curvature (\sC$\!\!$) hypersurfaces in spacetimes are important and interesting objects in general relativity.
From the geometric point of view,
\sC hypersurfaces in spacetimes are critical points of the surface area functional with fixed enclosed volume \cite{BCI}.
Local maximal properties of \sC hypersurfaces in some conditions are proved by Brill and Flaherty \cite{BF}.
These characterizations are similar to those of compact \C hypersurfaces in Euclidean spaces.

Brill, Cavallo, and Isenberg considered spacelike spherically symmetric constant mean curvature (\SC$\!\!$) hypersurfaces in static spacetimes,
especially in Schwarzschild spacetimes \cite{BCI}.
From the variational principle, the \C equation is derived.
They gave descriptions of the behavior of \SC hypersurfaces in the Schwarzschild spacetime through numerical integration and effective potential.

Among issues of \C hypersurfaces, \C foliations are important in understanding spacetimes and relativistic cosmology because York
\cite{Y} suggested the concept of the \C time functions on spacetimes.
Marsden and Tipler considered the existence and uniqueness of \C Cauchy hypersurface foliations
with mean curvature as a parameter in spatially closed universes or asymptotically flat spacetimes \cite{MT}.
In the paper \cite{BCI},
Brill-Cavallo-Isenberg conjectured that a complete \C foliation in the extended Schwarzschild spacetime with mean curvature varied for all values can be obtained.
This conjecture is answered by Eardly and Smarr on the existence in \cite{ES}, and Pervez, Qadir,
and Siddiqui gave a procedure which possibly produce an explicit construction with numerical evidence \cite{PQS}.

Malec and \'{O} Murchadha also considered \SC hypersurfaces and \C foliations in the Schwarzschild spacetime \cite{MO, MO2}.
Their idea is viewing the Einstein equation as a dynamical system,
then the Hamiltonian and momentum constraints give the formula of the second fundamental form of \SC hypersurfaces.
Through the analysis of the lapse function and the mean curvature of spherical two-surfaces,
they can characterize the behavior of \SC hypersurfaces.
In addition, they suggested two types of \SC foliation.
In \cite{MO}, they conjectured that the extended Schwarzschild spacetime can be foliated
by a family of \SC hypersurfaces with fixed mean curvature but varied another parameter.
In \cite{MO2}, they described another \SC foliation with varied mean curvature and the parameter.
Both \C foliations have phenomenon of exponentially collapsing lapse.

In this paper, we investigate  the \SC foliation property with fixed mean curvature and partially answer the conjecture posted by Malec and \'{O} Murchadha in
\cite{MO}.
To achieve the goal, detail study on properties of \SC hypersurfaces in the Schwarzschild spacetime and Kruskal extension is necessary.
Before aware of the work of Malec and \'{O} Murchadha in \cite{MO},
we characterize all smooth \SC hypersurfaces in the Kruskal extention from different points of view in \cite{LL1}.
Our proof of the \SC foliation property highly depends on the explicit formulation obtained in \cite{LL1}.
For the reader's reference, we first summarize related results on the smooth
\SC hypersurfaces in the Schwarzschild spacetime and Kruskal extension in section~\ref{Kru}.

In section~\ref{CMCfoliation}, we concentrate on the foliation of \SC family with
$T$-axisymmetry in the Kruskal extension, which is abbreviated by \TSC for convenience.
We explain and reformulate the \TSC foliation conjecture in section~\ref{conjecture},
and then derive criteria for \TSC family being disjoint in section~\ref{criteria}.
These criteria are used to show that the \TSC foliation holds in the Kruskal extension region
{\tt I$\!$I} and {\tt I$\!$I'} as in Theorem~\ref{CMCfoliation1} and Theorem~\ref{localCMCfoliation34}.
For the foliations in region {\tt I} and {\tt I'} with nonzero mean curvature, the estimates of the criteria are more subtle.
We test some crucial cases by the numerical integrations and it looks that \TSC foliation might hold in general.
As the analysis for this part is technically more involving, we leave the investigation to the future.

When mean curvature is zero, the surface is called maximal hypersurface.
In this case, it is much easier to prove the foliation property in region {\tt I} and {\tt I'}
and we show that {\small TSS}-maximal hypersurface form a foliation in the whole Kruskal extension.
This result was first proved in \cite{BO} by different arguments.

\section{Preliminary}\label{Kru}
The Schwarzschild spacetime is a $4$-dimensional time-oriented Lorentzian manifold with metric
\begin{align*}
\mathrm{d}s^2=-\left(1-\frac{2M}r\right)\mathrm{d}t^2+\frac1{\left(1-\frac{2M}r\right)}\,\mathrm{d}r^2
+r^2\,\mathrm{d}\theta^2+r^2\sin^2\theta\,\mathrm{d}\phi^2.
\end{align*}
After coordinates change, the Schwarzschild metric can be written as
\begin{align}
\mathrm{d}s^2&=\frac{16M^2\mathrm{e}^{-\frac{r}{2M}}}{r}(-\mathrm{d}T^2+\mathrm{d}X^2)+r^2\,\mathrm{d}\theta^2+r^2\sin^2\theta\,\mathrm{d}\phi^2 \notag \\
&=\frac{16M^2\mathrm{e}^{-\frac{r}{2M}}}{r}\,\mathrm{d}U\mathrm{d}V+r^2\,\mathrm{d}\theta^2+r^2\sin^2\theta\,\mathrm{d}\phi^2, \label{SchMetric2}
\end{align}
where
\begin{align}
\left\{
\begin{array}{l}
\displaystyle(r-2M)\,\mathrm{e}^{\frac{r}{2M}}=X^2-T^2=VU\\
\displaystyle\frac{t}{2M}=\ln\left|\frac{X+T}{X-T}\right|=\ln\left|\frac{V}{U}\right|. \label{trans}
\end{array}
\right.
\end{align}
It shows that $r=2M$ is only a coordinate singularity.
The Schwarzschild spacetime has a maximal analytic extension, called the Kruskal extension.
It is the union of regions {\tt I}, {\tt I$\!$I}, {\tt I'}, and {\tt I$\!$I'},
where regions {\tt I} and {\tt I$\!$I} correspond to the exterior and interior of one Schwarzschild spacetime, respectively,
and regions {\tt I'} and {\tt I$\!$I'} correspond to the exterior and interior of the other Schwarzschild spacetime.
Figure~\ref{KruskalSimple} points out their correspondences and coordinate systems $(X,T)$ or $(U,V)$.

\begin{figure}[h]
\psfrag{A}{\tt I}
\psfrag{B}{\tt I$\!$I}
\psfrag{C}{\tt I$\!$I'}
\psfrag{D}{\tt I'}
\psfrag{E}{$U$}
\psfrag{F}{$V$}
\psfrag{X}{$X$}
\psfrag{T}{$T$}
\psfrag{r}{$r$}
\psfrag{t}{$t$}
\psfrag{M}{\tiny$2M$}
\psfrag{P}{\tiny $\partial_T$}
\centering
\includegraphics[height=50mm,width=71mm]{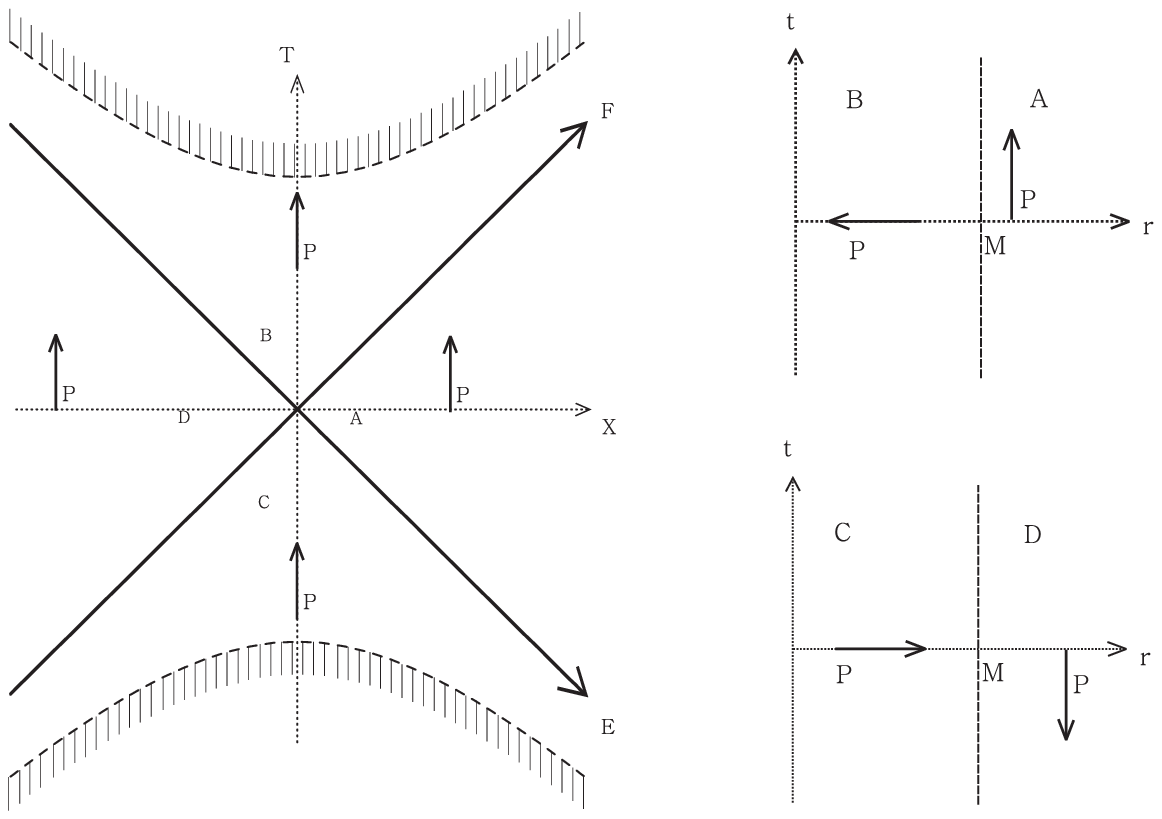}
\caption{The Kruskal extension of Schwarzschild spacetimes.}\label{KruskalSimple}
\end{figure}
When $r=2M$, relation (\ref{trans}) implies $U=0$ or $V=0$.
Furthermore, the solution of (\ref{trans}) with $r=2M$ and any finite value $t$ is $(U,V)=(0,0)$,
so the origin of the Kruskal extension correspond to all points of $r=2M$ and $t$ finite value
(although these points are not defined in the Schwarzschild spacetime).
Similarly, if $t\to\infty$ (or $-\infty$), then (\ref{trans}) gives $U=0$ (or $V=0$).
Hence $V$-axis correspond to all points $r=2M$ and $t=\infty$,
and $U$-axis correspond to all points $r=2M$ and $t=-\infty$

Sometimes we will use null coordinates $(u,v)$ by
\begin{align}
u=t-(r+2M\ln|r-2M|)\quad\mbox{and}\quad v=t+(r+2M\ln|r-2M|), \label{NullCoordinatesuv}
\end{align}
and relations between $(U,V)$ and $(u,v)$ are given by
\begin{align*}
\begin{array}{ccccc}
& \quad \mbox{region {\tt I}} & \quad\mbox{region {\tt I$\!$I}} & \quad \mbox{region {\tt I'}}
& \quad \mbox{region {\tt I$\!$I'}}\\
\hline
U & \quad \mathrm{e}^{-\frac{u}{4M}} & \quad -\mathrm{e}^{-\frac{u}{4M}}
& \quad -\mathrm{e}^{-\frac{u}{4M}} & \quad \mathrm{e}^{-\frac{u}{4M}} \\
V &\quad \mathrm{e}^{\frac{v}{4M}}  & \quad \mathrm{e}^{\frac{v}{4M}}
&\quad -\mathrm{e}^{\frac{v}{4M}}  & \quad-\mathrm{e}^{\frac{v}{4M}}. \\
\end{array}
\end{align*}

In this article, we will take $\partial_T$ as a future directed timelike vector field.
Note that $\partial_T$ in the two Schwarzschild spacetimes has different directions and it is indicated in Figure~\ref{KruskalSimple}.

In the following subsections,
we will summarize the results of spacelike spherically symmetric constant mean curvature (\SC for short) hypersurfaces in
Schwarzschild spacetimes and Kruskal extension.
These formulae and arguments are useful when dealing with \C foliation problem.
We refer to our article in ArXiv \cite{LL1} for more details.

\subsection{SS-CMC solutions in region {\tt I} and {\tt I'}}
To understand \SC hypersurfaces in the Kruskal extension,
one can study and analyze \SC solutions in the Schwarzschild spacetime first,
and then discuss their images in the Kruskal extension.

First, we consider \SC hypersurfaces in the Schwarzschild exterior, which map to the region {\tt I} or {\tt I'} in the Kruskal extension.
Since a spacelike hypersurface in the Schwarzschild exterior can always be written as a graph of $r,\theta$, and $\phi$,
particularly, an \SC hypersurface in the Schwarzschild exterior is a graph of $f(r)$.
\footnote{The function $f(r)$ corresponds to the height function $h(r)$ in paper~\cite{MO}.}
We use subscripts $f_1$ and $f_3$ to represent \SC hypersurfaces that map to region {\tt I} and {\tt I'}, and leave subscripts $f_2$ and $f_4$
for \SC hypersurfaces that map to region {\tt I$\!$I} and {\tt I$\!$I'}.

For $f_1(r)$, the constant mean curvature equation is
\begin{align*}
f_{1}''+\left(\left(\frac1{h}-(f_{1}')^2h\right)\left(\frac{2h}{r}+\frac{h'}2\right)+\frac{h'}{h}\right)f_{1}'-3H\left(\frac1{h}-(f_{1}')^2h\right)^{\frac32}=0,
\end{align*}
where $h(r)=1-\frac{2M}{r}$ and $H$ is the constant mean curvature.~\footnote{In paper~\cite{MO},
they use the terminology extrinsic curvature $K$, and the relation is $H=\frac{K}{3}$.}
This is a second order ordinary differential equation, and we can solve the equation as follows:
\begin{prop}{\rm\cite{LL1}} \label{scmc3prop1}
Suppose $\Sigma^{1}=(f_{1}(r),r,\theta,\phi)$ is an \SC hypersurface in the Schwarzschild exterior
{\rm(}corresponding to region {\tt I}{\rm)} with constant mean curvature $H$.
Then
\begin{align*}
f_{1}(r;H,c_{1},\bar{c}_1)=\int_{r_1}^r\frac{1}{h(x)}\frac{l_1(x;H,c_{1})}{\sqrt{1+l_1^2(x;H,c_{1})}}\,\mathrm{d}x+\bar{c}_1,
\end{align*}
where
\begin{align*}
l_1(r;H,c_{1})=\frac{1}{\sqrt{h(r)}}\left(Hr+\frac{c_1}{r^2}\right).
\end{align*}
Here $c_1$ and $\bar{c}_1$ are constants, and $r_1\in(2M,\infty)$ is fixed.
\end{prop}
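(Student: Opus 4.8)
The plan is to integrate the second-order \C equation once, using the fact that $f_1$ itself does not appear, and to recognize the resulting first integral as the stated expression $l_1$. First I would reduce the order: since only $f_1'$ and $f_1''$ enter, set $p=f_1'$ and regard $p$ as an unknown function of $r$, so that the \C equation becomes the first-order ODE
\[
p' + \left(\left(\tfrac1h - p^2 h\right)\left(\tfrac{2h}r + \tfrac{h'}2\right) + \tfrac{h'}h\right)p - 3H\left(\tfrac1h - p^2 h\right)^{3/2} = 0.
\]
The spacelike condition forces $\tfrac1h - p^2 h > 0$, i.e. $|p|<1/h$, so every quantity below is real and the substitution I make next is invertible on the whole region $r\in(2M,\infty)$.

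Next I would introduce the geometrically natural variable $l_1$ that trivializes the equation. Writing the future unit normal of $\Sigma^1$ as a boost $n=\cosh\beta\,e_t+\sinh\beta\,e_r$ of the static observer in the orthonormal frame $e_t=h^{-1/2}\partial_t$, $e_r=h^{1/2}\partial_r$, one is led to set $l_1=\sinh\beta$, which is exactly the radial component of $n$; equivalently $p=f_1'=\frac1h\frac{l_1}{\sqrt{1+l_1^2}}$, so that $l_1=\frac{ph}{\sqrt{1-p^2h^2}}$. Substituting this into the first-order equation, the claim I must verify is that the whole expression collapses to the exact form
\[
\frac{d}{dr}\left(r^2\sqrt{h}\, l_1\right) = 3Hr^2.
\]
This identity is the heart of the argument and is where essentially all of the computation lives: one differentiates $r^2\sqrt{h}\,l_1$, uses the ODE to eliminate $p'$, and checks that the $h'$- and $1/r$-terms in the coefficient $\frac{2h}r+\frac{h'}2$ and $\frac{h'}h$ are precisely what is needed for the product-rule pieces to cancel, leaving only $3Hr^2$.

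Granting the identity, the rest is routine. Integrating once gives $r^2\sqrt{h}\,l_1=Hr^3+c_1$ for a constant $c_1$, hence $l_1=\frac1{\sqrt h}\left(Hr+\frac{c_1}{r^2}\right)$, which is the stated formula for $l_1$. Integrating the relation $f_1'=\frac1h\frac{l_1}{\sqrt{1+l_1^2}}$ from a fixed $r_1\in(2M,\infty)$ then produces $f_1$ together with the additive constant $\bar c_1$, giving the displayed integral. Because the reduced equation for $p$ is first order with smooth right-hand side on the spacelike region, the constant value $c_1$ of the first integral parametrizes the solutions completely, so the two constants $c_1,\bar c_1$ exhaust all solutions and the formula is the general one, not merely a particular solution.

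The main obstacle I anticipate is purely the verification of the total-derivative identity: the substitution $p\mapsto l_1$ introduces several nested radicals, and one must organize the differentiation of $r^2\sqrt{h}\,l_1$ carefully so that the curvature terms reassemble into $\frac{d}{dr}(r^2\sqrt{h})\cdot l_1$ plus the surviving $3Hr^2$. Choosing the normalization $r^2\sqrt{h}\,l_1$ rather than $l_1$ itself is what makes the cancellation clean, and identifying this normalization in advance --- motivated by reading $r^2\sqrt{h}\,l_1$ as $-r^3$ times the angular principal curvature of $\Sigma^1$ --- is the one nonmechanical step.
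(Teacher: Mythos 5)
Your proposal is correct: substituting $p=\frac{1}{h}\frac{l_1}{\sqrt{1+l_1^2}}$ into the reduced first-order equation does make the first and last $h'$-terms cancel, leaving $h\,l_1'+\frac{2h}{r}l_1+\frac{h'}{2}l_1=3H\sqrt{h}$, which is exactly $\frac{\mathrm{d}}{\mathrm{d}r}\bigl(r^2\sqrt{h}\,l_1\bigr)=3Hr^2$, and integrating twice yields the stated formulas with the two constants $c_1,\bar{c}_1$ exhausting all solutions. Note that this paper states the proposition by citation to \cite{LL1} without reproducing a proof, and your first-integral argument is the natural derivation of that result, so there is nothing here that diverges from the paper's treatment.
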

Similarly, the constant mean curvature equation of an \SC hypersurface $\Sigma^{3}=(f_{3}(r),r,\theta,\phi)$ (corresponding to region {\tt I'}) is
\begin{align}
f_{3}''+\left(\left(\frac1{h}-(f_{3}')^2h\right)\left(\frac{2h}{r}+\frac{h'}2\right)+\frac{h'}{h}\right)f_{3}'+3H\left(\frac1{h}-(f_{3}')^2h\right)^{\frac32}=0,
\label{cmceq4}
\end{align}
and the solution of equation (\ref{cmceq4}) is the following:
\begin{prop}{\rm\cite{LL1}}
Suppose $\Sigma^{3}=(f_{3}(r),r,\theta,\phi)$ is an \SC hypersurface in the Schwarzschild exterior
{\rm(}corresponding to region {\tt I'}{\rm)} with constant mean curvature $H$.
Then
\begin{align*}
f_{3}(r;H,c_{3},\bar{c}_3)
=\int_{r_3}^r\frac{1}{h(x)}\frac{l_{3}(x;H,c_{3})}{\sqrt{1+l_{3}^2(x;H,c_{3})}}\,\mathrm{d}x+\bar{c}_3,
\end{align*}
where
\begin{align*}
l_{3}(r;H,c_3)=\frac{1}{\sqrt{h(r)}}\left(-Hr-\frac{c_{3}}{r^2}\right).
\end{align*}
Here $c_3$ and $\bar{c}_3$ are constants, and $r_3\in(2M,\infty)$ is fixed.
We remark that for given constant mean curvature $H$, if $c_1=c_3$, then $f_1'(r)=-f_3'(r)$.
\end{prop}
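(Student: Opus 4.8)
The plan is to integrate the second-order equation (\ref{cmceq4}) by reducing it to a first-order conservation law. Writing $p = f_3'$ and abbreviating $W = \sqrt{\frac1h - h p^2}$, which is real and positive precisely because $\Sigma^3$ is spacelike (so that $\frac1h - h(f_3')^2 > 0$), I would introduce the quantity
$$\mathcal{E}(r) = \frac{r^2 h\, p}{\sqrt{\frac1h - h p^2}} = \frac{r^2 h\, f_3'}{W},$$
and claim that (\ref{cmceq4}) is equivalent to $\frac{d}{dr}\mathcal{E} = -3Hr^2$. Such a first integral is to be expected: up to sign, $\mathcal{E}$ is the canonical momentum $\frac{\partial}{\partial f_3'}\!\left(r^2 W\right)$ of the area integrand $r^2\sqrt{\frac1h - h(f_3')^2}$, which depends on $f_3$ only through $f_3'$, and the mean-curvature term in (\ref{cmceq4}) supplies the source $-3Hr^2$.

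The heart of the argument, and the step where essentially all the computation lives, is to verify this equivalence directly. Differentiating $\mathcal{E}$ with the product and chain rules and using $(W^2)' = -\frac{h'}{h^2} - h' p^2 - 2h p p'$, one finds after collecting terms that $\frac{W^3}{r^2}\frac{d\mathcal{E}}{dr}$ equals exactly $p' + \left[\left(\frac1h - h p^2\right)\left(\frac{2h}{r} + \frac{h'}2\right) + \frac{h'}h\right]p$. The cancellations are clean: the $p'$ contributions combine as $(1 - h^2 p^2)p' + h^2 p^2 p' = p'$, and the remaining terms reorganize into the bracketed coefficient of $p$. Comparing with (\ref{cmceq4}) gives $\frac{W^3}{r^2}\frac{d\mathcal{E}}{dr} = -3HW^3$, that is, $\frac{d\mathcal{E}}{dr} = -3Hr^2$.

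Integrating this first-order relation yields $\mathcal{E}(r) = -Hr^3 - c_3$ for a constant $c_3$, and dividing by $r^2\sqrt h$ gives $\frac{\sqrt h\, f_3'}{W} = \frac1{\sqrt h}\left(-Hr - \frac{c_3}{r^2}\right) =: l_3(r; H, c_3)$. This is an algebraic relation between $f_3'$ and $l_3$ that I would invert: squaring produces $(f_3')^2 = \frac{l_3^2}{h^2(1 + l_3^2)}$, and since $\sqrt h\, f_3' = l_3 W$ with $W > 0$ forces $f_3'$ and $l_3$ to share the same sign, I select the branch $f_3' = \frac1h \frac{l_3}{\sqrt{1 + l_3^2}}$. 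A final integration from $r_3$ to $r$, with $\bar c_3 = f_3(r_3)$ absorbing the constant of integration, produces the claimed formula.

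For the closing remark I would compare with Proposition~\ref{scmc3prop1}, where $f_1' = \frac1h \frac{l_1}{\sqrt{1 + l_1^2}}$ and $l_1 = \frac1{\sqrt h}\left(Hr + \frac{c_1}{r^2}\right)$. When $c_1 = c_3$ one has $l_3 = -l_1$, and since $x \mapsto \frac{x}{\sqrt{1+x^2}}$ is odd it follows immediately that $f_1'(r) = -f_3'(r)$. The main obstacle is really bookkeeping rather than ideas: the verification that $\mathcal{E}$ is a first integral is a lengthy but routine algebraic identity, and one must track the sign of $W$, whose positivity encodes the spacelike condition, so that the correct branch is taken when inverting for $f_3'$.
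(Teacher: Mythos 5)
Your proposal is correct: the first integral $\mathcal{E}(r)=\frac{r^2h\,f_3'}{\sqrt{\frac1h-h(f_3')^2}}$ does satisfy $\mathcal{E}'=-3Hr^2$ exactly when (\ref{cmceq4}) holds (the cancellation you describe checks out), and setting $\mathcal{E}=-Hr^3-c_3$, tracking the sign via $\sqrt{h}\,f_3'=l_3W$ with $W>0$, and integrating gives precisely the stated formula and the remark. This is essentially the same route as the cited source \cite{LL1} (the present paper quotes the proposition without reproducing a proof): there, too, the solution comes from integrating the conservation law associated with the $f$-independence of the area integrand, so your argument matches the intended one.
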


We have complete discussion about the asymptotic behavior of \SC hypersurfaces $\Sigma_1$ and $\Sigma_3$ in \cite{LL1}.
Here we just remark that for $H\neq 0$, \SC hypersurfaces are asymptotic null and for $H=0$,
maximal hypersurfaces are asymptotic to the spatial infinity.

\subsection{SS-CMC solutions in region {\tt I$\!$I} and {\tt I$\!$I'}} \label{32}
For \SC hypersurfaces in the Schwarzschild interior,
since the future timelike direction is $-\partial_r$ direction,
spacelike condition gives that an \SC hypersurface can be written as $(t,g(t),\theta,\phi)$ for some function $r=g(t)$.

\begin{prop}{\rm\cite{MO}}\label{cylindrical}
Each constant slice $r=r_0 \in(0,2M)$ in the Schwarzschild interior {\rm(}corresponding to region {\tt I$\!$I}{\rm)} is an \SC hypersurface with mean curvature
\begin{align*}
H(r_0)=\frac{2r_0-3M}{3\sqrt{r_0^3(2M-r_0)}}.
\end{align*}
These hypersurfaces are called cylindrical hypersurfaces.
\end{prop}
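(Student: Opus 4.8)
The plan is to verify directly that a constant slice $\Sigma_{r_0}=\{r=r_0\}$ in region {\tt I$\!$I} is spacelike and then compute its mean curvature by tracing the second fundamental form; since $r$ is constant along the slice, the answer will depend only on $r_0$, which is exactly the constancy we want.

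First I would record the sign situation in the interior: there $h(r)=1-\frac{2M}{r}<0$, so in the metric $g_{tt}=-h>0$ and $g_{rr}=\frac1h<0$, meaning $r$ is the timelike coordinate and $t$ is spacelike. Parametrizing $\Sigma_{r_0}$ by $(t,\theta,\phi)$, the induced metric is $\gamma=\mathrm{diag}(-h,\,r_0^2,\,r_0^2\sin^2\theta)$, which is positive definite precisely because $-h>0$; this confirms that $\Sigma_{r_0}$ is a spacelike hypersurface and justifies treating it as an \SC candidate.

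Next I would build the unit normal. Since $\Sigma_{r_0}$ is a level set of $r$, the normal lies along $\partial_r$, and normalizing so that $g(n,n)=-1$ gives $n=\pm\sqrt{-h}\,\partial_r$ with covector component $n_r=\mp\frac{1}{\sqrt{-h}}$; the sign is fixed by the orientation convention singling out the future-directed normal as in Figure~\ref{KruskalSimple}. The tangential components of the second fundamental form are then $K_{ab}=-\nabla_a n_b=\Gamma^r_{ab}\,n_r$ for $a,b\in\{t,\theta,\phi\}$, and the only relevant nonzero Christoffel symbols of the Schwarzschild metric are
\begin{align*}
\Gamma^r_{tt}=\tfrac12 h h',\qquad \Gamma^r_{\theta\theta}=-hr,\qquad \Gamma^r_{\phi\phi}=-hr\sin^2\theta,
\end{align*}
with $h'=\frac{2M}{r^2}$.

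Finally I would trace against $\gamma^{ab}=\mathrm{diag}(-\tfrac1h,\,\tfrac1{r_0^2},\,\tfrac1{r_0^2\sin^2\theta})$ and simplify. Up to the overall sign, which is pinned down by the choice of future-directed normal together with the mean-curvature normalization, this yields
\begin{align*}
\mathrm{tr}\,K=\frac{1}{\sqrt{-h}}\left(\frac{h'}{2}+\frac{2h}{r_0}\right)=\frac{2r_0-3M}{\sqrt{r_0^3(2M-r_0)}},
\end{align*}
where I used $h'=\frac{2M}{r_0^2}$ and $-h=\frac{2M-r_0}{r_0}$; with the paper's normalization $H=\frac13\,\mathrm{tr}\,K$ this is exactly $H(r_0)=\frac{2r_0-3M}{3\sqrt{r_0^3(2M-r_0)}}$. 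Because every quantity above is constant on $\Sigma_{r_0}$, the mean curvature is constant, so $\Sigma_{r_0}$ is an \SC hypersurface. The computation itself is routine; the only point that genuinely requires care is the sign, i.e. choosing the future-directed normal consistent with the orientation in Figure~\ref{KruskalSimple} and the $H=\frac13\,\mathrm{tr}\,K$ convention, so that the trace comes out as the stated expression rather than its negative.
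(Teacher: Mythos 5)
The paper itself contains no proof of Proposition~\ref{cylindrical}: it is quoted from \cite{MO} as part of the preliminary summary, so there is no argument in the paper to compare yours against, and a self-contained verification such as yours is a reasonable thing to supply. Your route---checking that the induced metric $\mathrm{diag}(-h,\,r_0^2,\,r_0^2\sin^2\theta)$ is Riemannian, taking the unit normal along $\partial_r$, computing $K_{ab}=\Gamma^r_{ab}\,n_r$, and tracing---is the standard one, and its ingredients are all correct: the spacelike check, the Christoffel symbols, and the magnitude $|2r_0-3M|/\sqrt{r_0^3(2M-r_0)}$ of the trace.

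The one genuine defect is the sign, which you defer to ``convention'' but never fix, and the conventions you actually state produce the opposite sign. In region {\tt I$\!$I} the future timelike direction is $-\partial_r$, so the future-directed unit normal is $n=-\sqrt{-h}\,\partial_r$, i.e.\ $n_r=+1/\sqrt{-h}$; with your definition $K_{ab}=-\nabla_a n_b=\Gamma^r_{ab}\,n_r$ this yields
\[
\mathrm{tr}\,K=\gamma^{ab}\Gamma^r_{ab}\,n_r=\frac{1}{\sqrt{-h}}\left(-\frac{h'}{2}-\frac{2h}{r_0}\right)=\frac{3M-2r_0}{\sqrt{r_0^3(2M-r_0)}},
\]
the negative of the stated value; your displayed formula for $\mathrm{tr}\,K$ actually corresponds to the past-directed normal under your own convention. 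The convention that reproduces the proposition is $3H=\nabla_\mu n^\mu$ with $n$ future-directed (equivalently $K_{ab}=+\nabla_a n_b$): then $3H=-r^{-2}\,\mathrm{d}\bigl(r^{3/2}\sqrt{2M-r}\bigr)/\mathrm{d}r$ evaluated at $r=r_0$, which is $(2r_0-3M)/\sqrt{r_0^3(2M-r_0)}$. Moreover, this choice can be pinned down internally rather than by fiat: the same convention must simultaneously give $H(r_0)=(3M-2r_0)/\bigl(3\sqrt{r_0^3(2M-r_0)}\bigr)$ for cylinders in region {\tt I$\!$I'} (stated just before Proposition~\ref{propinteriorII}), where the future direction is $+\partial_r$, and it does; it also makes $r=3M/2$ the unique maximal cylinder, consistent with $r_{H=0}=3M/2$. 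Since the signs of $H$ and $c$ drive the entire foliation analysis in the paper (the asymmetry between $c_H$ and $C_H$, the ordering of the four families), this is the one step of the computation that cannot be left ``up to the overall sign.''
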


For $r=g(t)\neq\mbox{constant}$, we consider its inverse function, and denote $t=f_{2}(r)$ whenever it is defined.
Since $f_2(r)$ is obtained from the inverse function, we have $f_2'(r)\neq 0$ and will allow $f_2'(r)=\infty$ or $-\infty$.
\begin{prop}{\rm\cite{LL1}}\label{prop6}
Suppose $\Sigma^{2}=(f_{2}(r),r,\theta,\phi)$ is an \SC hypersurface in the Schwarzschild interior {\rm(}corresponding to region {\tt I$\!$I}{\rm)}.
Then
\begin{align}
f_{2}^*(r;H,c_{2},\bar{c}_2)
&=\int_{r_2}^r\frac{1}{-h(x)}\sqrt{\frac{l^2_{2}(x;H,c_{2})}{l_{2}^2(x;H,c_{2})-1}}\,\mathrm{d}x+\bar{c}_2,
\quad\mbox{or}
\label{f2positive} \\
f_{2}^{**}(r;H,c_{2},\bar{c}_2')
&=\int_{r_2'}^r\frac{1}{h(x)}\sqrt{\frac{l^2_{2}(x;H,c_{2})}{l_{2}^2(x;H,c_{2})-1}}\,\mathrm{d}x+\bar{c}_2'
\label{f2negative}
\end{align}
depending on the sign of $f_2'(r)$, where
\begin{align*}
l_{2}(r;H,c_2)=\frac1{\sqrt{-h(r)}}\left(-Hr-\frac{c_{2}}{r^2}\right).
\end{align*}
Here $c_2, \bar{c}_2, \bar{c}_2'$ are constants, and $r_2,r_2'$ are points in the domain of $f_2^*(r)$ and $f_2^{**}(r)$, respectively.

The function $l_2(r)$ should satisfy $l_2(r)>1$, which implies $c_{2}<0$ when $H>0$ and $c_{2}<-8M^3H$ when $H\leq 0$.
We will write $f_2(r)$ to denote both $f_2^{*}(r)$ and $f_2^{**}(r)$.
\end{prop}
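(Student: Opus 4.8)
The plan is to mirror the exterior computation of Proposition~\ref{scmc3prop1}, but carrying along the fact that in the Schwarzschild interior $h(r)=1-\frac{2M}{r}<0$, so that $\partial_r$ is timelike, $\partial_t$ is spacelike, and the future direction is $-\partial_r$. First I would write the hypersurface as the graph $\Sigma^2=(f_2(r),r,\theta,\phi)$ and compute the induced metric; the only nontrivial tangential inner product is $\langle f_2'\partial_t+\partial_r,\,f_2'\partial_t+\partial_r\rangle=-h(f_2')^2+\frac1h$, which, since $h<0$, is positive exactly when $h^2(f_2')^2>1$, i.e.\ $|hf_2'|>1$. This is the spacelike condition, and it is the interior analogue of the exterior bound $|hf_1'|<1$, the reversal $<1\leftrightarrow>1$ reflecting the swap of the timelike and spacelike roles of $r$ and $t$. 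I would then fix the future-directed unit timelike normal $\nu=a\partial_t+b\partial_r$ by imposing $\langle\nu,f_2'\partial_t+\partial_r\rangle=0$ and $\langle\nu,\nu\rangle=-1$, which gives $b=h^2f_2'\,a$ and $a^2=((-h)(h^2(f_2')^2-1))^{-1}$, the sign of $a$ being pinned by $\langle\nu,-\partial_r\rangle>0$.

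Because $\Sigma^2$ is spherically symmetric, its second fundamental form has only two distinct eigenvalues: a radial one $\lambda_r$, which carries the $f_2''$ term, and a double spherical one $\lambda_\perp$, which is purely algebraic in $f_2'$. Computing $\langle\nabla_{\partial_\theta}\partial_\theta,\nu\rangle=-rh\,\langle\partial_r,\nu\rangle$ and dividing by $|\partial_\theta|^2=r^2$ yields $\lambda_\perp=\pm\frac{h^2 f_2'}{r\sqrt{(-h)(h^2(f_2')^2-1)}}$. Rather than expand the full second-order mean curvature equation (the interior analogue of (\ref{cmceq4})) and integrate it by hand, I would extract the first integral from the Codazzi equation, which for a spherically symmetric hypersurface reads $\lambda_\perp'=\frac{\lambda_r-\lambda_\perp}{r}$. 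Combining this with the constant mean curvature relation $\lambda_r+2\lambda_\perp=3H$ gives $\lambda_\perp'=\frac{3(H-\lambda_\perp)}{r}$, so that $w=\lambda_\perp-H$ solves $w'=-\frac{3w}{r}$ and hence $w=\frac{\text{const}}{r^3}$, i.e.\ $\lambda_\perp=H+\frac{c}{r^3}$. This is the step that introduces the parameter $c_2$ (up to the sign convention inherited from the orientation of $\nu$ and from the future direction being $-\partial_r$).

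Matching the geometric expression for $\lambda_\perp$ against $H+\frac{c}{r^3}$ and solving the resulting algebraic relation for $hf_2'$ gives $(hf_2')^2=\frac{l_2^2}{l_2^2-1}$ with $l_2(r;H,c_2)=\frac1{\sqrt{-h(r)}}(-Hr-\frac{c_2}{r^2})$, exactly as claimed; the two choices of sign of the square root correspond to the two possible signs of $f_2'$, hence to the two formulas $f_2^{*}$ and $f_2^{**}$, and a final integration in $r$ produces the integral expressions with the additive constants $\bar c_2,\bar c_2'$. The reality of $\sqrt{l_2^2-1}$ together with the spacelike bound $|hf_2'|>1$ forces $l_2>1$; from this I would read off the stated conditions on $c_2$ by analyzing the numerator $-Hr-\frac{c_2}{r^2}$, which must be positive (giving $c_2<-Hr^3<0$, hence $c_2<0$, when $H>0$), and the limit $r\to2M^-$, where $\sqrt{-h}\to0$ forces $-2MH-\frac{c_2}{4M^2}\ge0$, i.e.\ $c_2<-8M^3H$, the relevant threshold when $H\le0$.

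I expect the main obstacle to be the sign bookkeeping rather than any hard analysis: because $r$ is timelike and the future direction is $-\partial_r$, the orientation of $\nu$, the sign inside $l_2$, and the branch ($f_2^{*}$ versus $f_2^{**}$) must all be tracked consistently to land on precisely the stated $l_2$ rather than a sign-flipped variant. A clean consistency check is the degenerate case $l_2\to1^+$, where $|hf_2'|\to\infty$ and the graph becomes vertical: this should reproduce exactly the cylindrical slices $r=r_0$ of Proposition~\ref{cylindrical}, and matching the two mean curvatures fixes the remaining conventions.
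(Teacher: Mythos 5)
Your derivation of the main formula is correct, but it follows a genuinely different route from the one behind the paper. The paper (quoting \cite{LL1}) proceeds as in Proposition~\ref{scmc3prop1}: write the interior analogue of the second-order graph ODE (\ref{cmceq4}) for $t=f_2(r)$ and integrate it once, the integration constant being $c_2$. You instead bypass the second-order equation entirely: you compute the spherical eigenvalue $\lambda_\perp$ of the second fundamental form, invoke the contracted Codazzi (vacuum momentum constraint) relation $\lambda_\perp'=\frac{\lambda_r-\lambda_\perp}{r}$, and combine it with $\lambda_r+2\lambda_\perp=3H$ to get the first integral $\lambda_\perp=H+\frac{c}{r^3}$; solving $\lambda_\perp=H+\frac{c}{r^3}$ algebraically for $hf_2'$ then yields $(hf_2')^2=\frac{l_2^2}{l_2^2-1}$ and the two branches (\ref{f2positive}), (\ref{f2negative}). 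This is essentially the Malec--\'{O} Murchadha constraint method described in the introduction, and it buys a geometric interpretation of $c_2$ (the momentum constant) while avoiding the search for an integrating factor; the ODE route is more elementary and self-contained. Your checks are sound: the spacelike condition $|hf_2'|>1$ in the interior is right, the Codazzi relation is valid here because Schwarzschild is Ricci flat, and the degenerate limit $l_2\to1^+$ does reproduce the cylindrical slices of Proposition~\ref{cylindrical} (imposing $k_H'(r_0)=0$ recovers exactly $H(r_0)=\frac{2r_0-3M}{3\sqrt{r_0^3(2M-r_0)}}$). Two sign remarks: with signature $(-,+,+,+)$ a future-directed normal satisfies $\langle\nu,-\partial_r\rangle<0$, not $>0$; and, as you anticipated, $|hf_2'|>1$ only gives $l_2^2>1$, the choice $l_2>1$ rather than $l_2<-1$ being exactly what distinguishes $\Sigma^2$ from the region {\tt I$\!$I'} family of Proposition~\ref{propinteriorII}.

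There is one genuine gap, in the parameter restriction for $H\leq 0$. Your argument takes the limit $r\to 2M^-$ and concludes $-2MH-\frac{c_2}{4M^2}\geq 0$. This presupposes that the domain of $f_2$ reaches $r=2M$, which is not given (the proposition allows domains of the form $(0,r']$ with $r'<2M$, and whether the domain must reach $2M$ when $H\le 0$ is essentially the statement being proved), and in any case it only yields the non-strict inequality $c_2\leq-8M^3H$. The correct argument is pointwise and needs no limit: $l_2(r)>1$ at a single point $r$ is equivalent to $c_2<k_H(r)$ with $k_H$ as in (\ref{k2function}), and for $H\leq 0$ one has
\begin{align*}
k_H(r)<-8M^3H
\quad\Longleftrightarrow\quad
H(8M^3-r^3)<r^{\frac32}(2M-r)^{\frac12},
\end{align*}
which holds trivially on $(0,2M)$ since the left side is $\leq 0$ and the right side is positive. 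Hence existence of any point with $l_2(r)>1$ forces $c_2<-8M^3H$ strictly. (Your argument for $H>0$, via positivity of the numerator $-Hr-\frac{c_2}{r^2}$, is the same kind of pointwise reasoning and is fine.)
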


Similarly, for \SC hypersurfaces in another Schwarzschild interior (corresponding to region {\tt I$\!$I'}),
all cylindrical hypersurfaces $r=r_0\in(0,2M)$ are \SC solutions with mean curvature $H(r_0)=\frac{3M-2r_0}{3\sqrt{r_0^3(2M-r_0)}}$.
When $r\neq\mbox{constant}$,
we have the following results.
\begin{prop}{\rm\cite{LL1}}\label{propinteriorII}
Suppose $\Sigma^{4}=(f_{4}(r),r,\theta,\phi)$ is an \SC hypersurface in the Schwarzschild interior {\rm(}corresponding to region {\tt I$\!$I'}{\rm)}.
Then
\begin{align}
f_{4}^{*}(r;H,c_{4},\bar{c}_4)
&=\int_{r_4}^r\frac{1}{-h(x)}\sqrt{\frac{l^2_{4}(x;H,c_{4})}{l_{4}^2(x;H,c_{4})-1}}\,\mathrm{d}x+\bar{c}_4,
\quad\mbox{or} \label{f4positive} \\
f_{4}^{**}(r;H,c_{4},\bar{c}_4')
&=\int_{r_4'}^r\frac{1}{h(x)}\sqrt{\frac{l^2_{4}(x;H,c_{4})}{l_{4}^2(x;H,c_{4})-1}}\,\mathrm{d}x+\bar{c}_4'
\label{f4negative}
\end{align}
depending on the sign of $f_4'(r)$, where
\begin{align*}
l_4(r)=\frac1{\sqrt{-h(r)}}\left(Hr+\frac{c}{r^2}\right).
\end{align*}
Here $c_{4}, \bar{c}_4, \bar{c}_4'$ are constants,
and $r_4, r_4'$ are fixed numbers in the domain of $f_4^{*}(r)$ and $f_4^{**}(r)$, respectively.

The function $l_{4}(r)$ should satisfy $l_{4}(r)>1$,
which implies $c_4>-8M^3H$ when $H\ge 0$ and $c_4>0$ when $H<0$.
In addition, we have $f_{4}'(r) \neq 0$ and will allow $f_{4}'(r)=\pm\infty$ at some point.
We will write $f_4(r)$ to denote both $f_4^{*}(r)$ and $f_4^{**}(r)$.
\end{prop}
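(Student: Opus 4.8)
The plan is to re-derive the pair of formulas by paralleling the treatment of region {\tt I$\!$I} in Proposition~\ref{prop6}, modifying only the signs dictated by the reversed time orientation. In the Schwarzschild interior $h(r)=1-\tfrac{2M}{r}<0$, so $t$ is spacelike and $r$ is timelike with future direction $-\partial_r$. First I would write $\Sigma^{4}=(f_{4}(r),r,\theta,\phi)$ and compute the induced metric; its radial component is $\tfrac1h-h(f_4')^2$, and since $h<0$ the spacelike condition forces $h^2(f_4')^2>1$, in contrast with $h^2(f')^2<1$ in the exterior. Choosing the future-directed unit normal and tracing the second fundamental form over the radial and the two angular directions yields the constant mean curvature equation for $\Sigma^4$, the interior analogue of (\ref{cmceq4}). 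Because $\partial_T$, and hence the sign attached to $H$, is reversed in the second Schwarzschild copy, the mean-curvature term enters with the opposite sign to region {\tt I$\!$I}; this is the same mechanism that produces $-Hr-\tfrac{c_2}{r^2}$ in $l_2$ but $Hr+\tfrac{c}{r^2}$ in $l_4$.

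The crux is to recognize this second order equation as an exact derivative. Writing the signed radial tilt
\[
l_4(r)=\frac{-h(r)\,f_4'(r)}{\sqrt{h^2(r)(f_4'(r))^2-1}},
\]
I would verify by direct differentiation that the mean curvature equation is equivalent to the first order conservation law
\[
\frac{\mathrm{d}}{\mathrm{d}r}\Bigl(r^2\sqrt{-h(r)}\;l_4(r)\Bigr)=3Hr^2 .
\]
Integrating once gives $r^2\sqrt{-h}\,l_4=Hr^3+c$, that is $l_4(r)=\tfrac1{\sqrt{-h(r)}}\bigl(Hr+\tfrac{c}{r^2}\bigr)$, which is exactly the asserted form of $l_4$. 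Equivalently, this conserved flux is the radial momentum constraint for a spherically symmetric slice, which is the route taken by Malec and \'{O} Murchadha; either derivation gives the same first integral and isolates the free constant $c$.

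It then remains to invert the tilt relation and integrate a second time. Solving the displayed definition of $l_4$ for $f_4'$ yields $f_4'=\tfrac{1}{-h}\sqrt{\tfrac{l_4^2}{l_4^2-1}}$ when $f_4'>0$ and $f_4'=\tfrac{1}{h}\sqrt{\tfrac{l_4^2}{l_4^2-1}}$ when $f_4'<0$; integrating these two branches produces precisely (\ref{f4positive}) and (\ref{f4negative}). The spacelike condition $h^2(f_4')^2>1$ becomes $l_4>1$ once the normal is oriented so that $l_4>0$, and substituting the explicit $l_4$ shows that $l_4>1$ can hold only if $c>-8M^3H$ for $H\ge0$ and $c>0$ for $H<0$; these bounds are read off by examining the numerator $Hr+\tfrac{c}{r^2}$ near the horizon $r=2M$ (for $H\ge0$) and near $r=0$ (for $H<0$).

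I expect the main obstacle to be the sign bookkeeping rather than any deep estimate. Keeping consistent track of (i) the orientation of the unit normal and therefore the sign of $H$, (ii) the sign of $f_4'$, which selects between (\ref{f4positive}) and (\ref{f4negative}), and (iii) the turning points where $f_4'=\pm\infty$ and $l_4\to1$, at which the two branches must be glued, is delicate; the value $l_4=1$ is both the spacelike boundary and the locus where $\tfrac{l_4}{\sqrt{l_4^2-1}}$ diverges, so some care is needed to confirm that the piecewise solution is a genuine smooth \SC hypersurface. Verifying the conservation law itself is a direct but somewhat lengthy computation, which I would streamline by differentiating $r^2\sqrt{-h}\,l_4$ and substituting the mean curvature equation, rather than expanding $f_4''$ by brute force.
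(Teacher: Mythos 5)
Your overall route --- reduce the second-order CMC equation to a first-order conservation law (the momentum constraint), integrate once to obtain the tilt function, invert it, and read off the bounds on $c_4$ from positivity of the tilt near $r=2M$ and near $r=0$ --- is indeed the derivation that underlies this proposition (the paper itself imports the statement from \cite{LL1} without proof), and your endpoint analysis giving $c_4>-8M^3H$ for $H\ge0$ and $c_4>0$ for $H<0$ is correct. However, the conservation law you propose contains a sign error that invalidates the $f_4'<0$ branch. In region {\tt I$\!$I'} the future timelike direction is $+\partial_r$, so the future-directed unit normal of the graph $t=f_4(r)$ has radial component $n^r=\sqrt{-h}\,\frac{|h f_4'|}{\sqrt{h^2(f_4')^2-1}}$, which is positive on \emph{both} branches; the CMC condition $3H=\frac{1}{r^2}\frac{\mathrm{d}}{\mathrm{d}r}(r^2 n^r)$ therefore yields $\frac{\mathrm{d}}{\mathrm{d}r}\bigl(r^2\sqrt{-h}\,l_4\bigr)=3Hr^2$ with the \emph{unsigned} tilt $l_4=\frac{|h f_4'|}{\sqrt{h^2(f_4')^2-1}}$. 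Your signed tilt $\lambda=\frac{-h f_4'}{\sqrt{h^2(f_4')^2-1}}$ equals $l_4$ only when $f_4'>0$; when $f_4'<0$ one has $\lambda=-l_4$, and there your law $\frac{\mathrm{d}}{\mathrm{d}r}(r^2\sqrt{-h}\,\lambda)=3Hr^2$ is the CMC equation with respect to the \emph{past}-directed normal, i.e.\ mean curvature $-H$. Carrying your plan out on that branch produces a tilt of the form $\frac{1}{\sqrt{-h}}\left(-Hr-\frac{c}{r^2}\right)$ --- the region {\tt I$\!$I} form of Proposition~\ref{prop6}, with domain governed by $k_H$ rather than $\tilde{k}_H$ --- instead of the asserted $\frac{1}{\sqrt{-h}}\left(Hr+\frac{c}{r^2}\right)$; for $H\neq 0$ these families are genuinely different, so on that branch you would be proving the wrong formula (\ref{f4negative}).

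The slip is also visible internally: your signed definition forces $\mathrm{sgn}(f_4')=\mathrm{sgn}(\lambda)$, so once you impose $\lambda=l_4>1$ the case $f_4'<0$ cannot occur at all, contradicting the inversion step in which you extract both (\ref{f4positive}) and (\ref{f4negative}) from the same positive $l_4$. The repair is local: define the tilt from the start as the $\sqrt{-h}$-normalized radial component of the \emph{future-directed} normal, i.e.\ the unsigned $l_4$ above. Then the conservation law holds with the same right-hand side $3Hr^2$ on both branches, integrating it gives $l_4=\frac{1}{\sqrt{-h}}\left(Hr+\frac{c_4}{r^2}\right)>1$, and solving $h^2(f_4')^2=\frac{l_4^2}{l_4^2-1}$ for $f_4'$ yields exactly the two branches (\ref{f4positive}) and (\ref{f4negative}); the rest of your outline then goes through unchanged. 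As a consistency check on the orientation, this convention is the one that reproduces the paper's cylindrical values: $H(r_0)=\frac{3M-2r_0}{3\sqrt{r_0^3(2M-r_0)}}$ in region {\tt I$\!$I'} versus $H(r_0)=\frac{2r_0-3M}{3\sqrt{r_0^3(2M-r_0)}}$ in region {\tt I$\!$I}.
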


From Proposition~\ref{prop6} and \ref{propinteriorII}, we know conditions $l_2(r)>1$ and $l_4(r)>1$
put restrictions on the domain of $f_{2}(r;H,c)$ and $f_{4}(r;H,c)$, respectively.
Remark that $c$ could be $c_2$ or $c_4$. Particularly, we have the equivalent conditions:
\begin{align*}
l_2(r)&=\frac1{\sqrt{-h(r)}}\left(-Hr-\frac{c}{r^2}\right)>1 \Leftrightarrow -Hr^3-r^{\frac32}(2M-r)^{\frac12}>c, \\
l_4(r)&=\frac1{\sqrt{-h(r)}}\left(Hr+\frac{c}{r^2}\right)>1 \Leftrightarrow -Hr^3+r^{\frac32}(2M-r)^{\frac12}<c.
\end{align*}
Define two functions $k_H(r)$ and $\tilde{k}_H(r)$ on $(0,2M)$ by
\begin{align}
k_{H}(r)&=-Hr^3-r^{\frac32}(2M-r)^{\frac12} \label{k2function} \\
\tilde{k}_{H}(r)&=-Hr^3+r^{\frac32}(2M-r)^{\frac12}, \label{ktilde}
\end{align}
then domains of $f_2(r;H,c)$ and $f_4(r;H,c)$ will be
\begin{align*}
&D_2=\{r\in(0,2M)|k_{H}(r)>c\}\cup\{r\in(0,2M)|k_{H}(r)=c\mbox{ and } f_{2}(r)\mbox{ is finite}\} \\
&D_4=\{r\in(0,2M)|\tilde{k}_{H}(r)<c\}\cup\{r\in(0,2M)|\tilde{k}_{H}(r)=c \mbox{ and } f_{4}(r) \mbox{ is finite}\}.
\end{align*}

\begin{figure}[h]
\centering
\psfrag{r}{$r$}
\psfrag{t}{$t$}
\psfrag{k}{$k_H(r)$}
\psfrag{K}{$\tilde{k}_H(r)$}
\psfrag{R}{\tiny$R_H$}
\psfrag{S}{\tiny$r_H$}
\psfrag{M}{\tiny$2M$}
\psfrag{H}{$-8M^3H$}
\psfrag{C}{$C_H$}
\psfrag{c}{$c_H$}
\psfrag{l}{\hspace*{-1mm}$L(r)=c$}
\includegraphics[height=50mm,width=65mm]{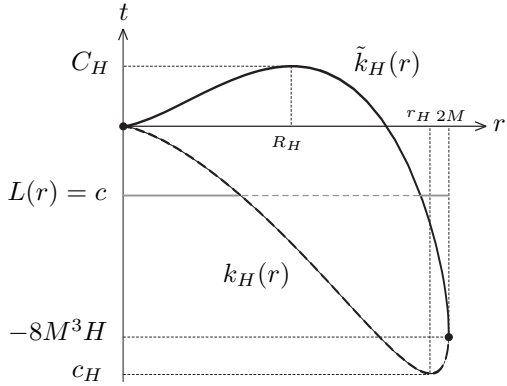}
\caption{Domain of $f_2(r)$ and $f_4(r)$.} \label{domainHplus02}
\end{figure}

It is easy to see domains $D_2$ and $D_4$ visually.
Take $H>0$ for example and cases $H=0$ and $H<0$ are similar. Figure~\ref{domainHplus02} illustrates the graphs of
$k_{H}(r)$ and $\tilde{k}_{H}(r)$, which form a loop in the $r$-$t$ plane.
Given any constant $c$, preimage of the line $L(r)=c$ below $k_{H}(r)$ belongs to $D_2$,
and preimage of the line above $\tilde{k}_{H}(r)$ belongs to $D_4$.

We still have to take care of the intersection of $L(r)=c$ and the loop, which may also belong to $D_2$ or $D_4$.
Let $c_H$ be the minimum value of $k_H(r)$ achieved at $r=r_H$ and let $C_H$ be the maximum value of $\tilde{k}_H(r)$ achieved at $r=R_H$.
After some analysis which can be found in \cite{LL1}, it turns out that when $c\in (c_H,C_H)$, intersections of $L(r)=c$ and $k_H(r)$ belong to $D_2$,
and intersections of $L(r)=c$ and $\tilde{k}_H(r)$ belong to $D_4$.
But when $c=c_H$ or $c=C_H$, their intersections do not belong to domains $D_2$ or $D_4$.

In conclusion, we have the following proposition on \SC hypersurfaces:
\begin{prop}{\rm\cite{LL1}}\label{propositioninterior}
\mbox{}
\begin{itemize}
\item[\rm(a)] If $c_H<c_2<\max(0,-8M^3H)$, then $f_{2}(r)$ is defined on $(0,r']$ or $[r'',2M)$ for some $r'$ and $r''$, which depend on $H$ and $c_2$.
When we take $r_2=r_2'=r' (\mbox{or } r'')$ and $\bar{c}_2=\bar{c}_2'$ in {\rm(\ref{f2positive})} and {\rm(\ref{f2negative})},
the union $\Sigma^2=(f_{2}^{*}(r;H,c_2,\bar{c}_2)\cup f_2^{**}(r;H,c_2,\bar{c}_2'),r,\theta,\phi)$ is a smooth \SC hypersurface in the Schwarzschild interior
{\rm(}region {\tt I$\!$I}{\rm)}.
\item[\rm(b)] If $\min(0,-8M^3H)<c_4<C_H$, then $f_{4}(r)$ is defined on $(0,r']$ or $[r'',2M)$ for some $r'$ and $r''$, which depend on $H$ and $c_4$.
When we take $r_4=r_4'=r' (\mbox{or } r'')$ and $\bar{c}_4=\bar{c}_4'$ in {\rm(\ref{f4positive})} and {\rm(\ref{f4negative})},
the union $\Sigma^4=(f_{4}^{*}(r;H,c_4,\bar{c}_4)\cup f_4^{**}(r;H,c_4,\bar{c}_4'),r,\theta,\phi)$ is a smooth \SC hypersurface in the Schwarzschild interior
{\rm(}region {\tt I$\!$I'}{\rm)}.
\end{itemize}
\end{prop}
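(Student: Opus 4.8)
The plan is to show that, under the stated bound on $c_2$, the two integral branches $f_2^{*}$ and $f_2^{**}$ are merely the two halves of a single smooth solution curve, joined at a nondegenerate turning point where $f_2'=\pm\infty$. This turning point is exactly the finite endpoint $r'$ (or $r''$) of the relevant component of $D_2$: there $l_2(r)=1$, which by the computation preceding \eqref{k2function} is equivalent to $k_H(r')=c_2$. Accordingly the argument splits into three tasks: locating this endpoint and checking it is a \emph{simple} zero of $l_2^2-1$; checking that the branches meet at a single point once the constants are matched; and proving the glued curve is $C^\infty$ at the junction. I will treat $H>0$ in detail, the cases $H=0$ and $H<0$ being analogous as already noted for Figure~\ref{domainHplus02}.

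First I would read the domain off the loop in Figure~\ref{domainHplus02}. For $H>0$ one has $\max(0,-8M^3H)=0$, and since $k_H$ attains an interior minimum $c_H$ at $r_H$ with $k_H(0^+)=0$ and $k_H(2M^-)=-8M^3H$, the condition $k_H(r)>c_2$ carves out a component of the form $(0,r']$ or $[r'',2M)$ whose finite endpoint solves $k_H=c_2$. Because $c_2>c_H$, this endpoint differs from the minimizer $r_H$, so $k_H'(r')\neq0$; equivalently $l_2^2-1$ has a simple zero at $r'$. Hence the integrand of \eqref{f2positive}--\eqref{f2negative} behaves like a constant times $(r'-r)^{-1/2}$ near $r'$, which is integrable, so $f_2(r')$ is finite; this is exactly the finiteness that places $r'$ in $D_2$, consistent with the case $c\in(c_H,C_H)$ already recorded in the text. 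Setting $r_2=r_2'=r'$ and $\bar c_2=\bar c_2'$ forces $f_2^{*}(r')=f_2^{**}(r')=\bar c_2$, and since $f_2^{*}<\bar c_2$ while $f_2^{**}>\bar c_2$ on $(0,r')$, the two branches together describe a graph $r=g(t)$ with a local maximum $r'$ at $t=\bar c_2$.

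The crux, and the step I expect to be the main obstacle, is the regularity of this glued graph at the turning point, where the chosen variables degenerate and one must rule out a cusp or a merely $C^1$ corner. I would resolve it by switching the roles of the variables: in region {\tt I$\!$I} the coordinate $t$ is spacelike, and at the turning point the spacelike condition $|g'(t)|<-h(g(t))$ holds strictly (indeed $g'=0$ there, while $-h(r')>0$), so the \C equation of Proposition~\ref{prop6}, rewritten as a graph over $t$, becomes a second-order ODE $g''=\Phi(g,g')$ with $\Phi$ smooth throughout the region $|g'|<-h(g)$, hence regular on a full neighborhood of $(\bar c_2,r')$. Standard existence and uniqueness then gives a unique $C^\infty$ solution $g$ with $g(\bar c_2)=r'$, $g'(\bar c_2)=0$ and $g''(\bar c_2)\neq0$, the nondegeneracy being the same simple-zero condition $k_H'(r')\neq0$. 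I would close the argument by checking that inverting this smooth $g$ on either side of its maximum reproduces precisely $f_2^{*}$ and $f_2^{**}$: each half satisfies the first-order relation encoded by $l_2$, and uniqueness of that first-order equation identifies it with the corresponding integral branch. Thus $f_2^{*}\cup f_2^{**}$ is the graph of one smooth function, i.e.\ a smooth \SC hypersurface, which proves~(a).

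Part~(b) is entirely parallel: one repeats the argument with $\tilde k_H$ of \eqref{ktilde} in place of $k_H$, the inequality $\tilde k_H(r)<c_4$ defining $D_4$, and the bound $\min(0,-8M^3H)<c_4<C_H$ ensuring that the finite endpoint is a simple zero of $l_4^2-1$ distinct from the maximizer $R_H$. The same reparametrization over the spacelike coordinate $t$ and the same uniqueness step then glue $f_4^{*}$ and $f_4^{**}$ into a smooth \SC hypersurface in region~{\tt I$\!$I'}. The only place demanding genuine care in either part is the turning-point regularity of the previous paragraph; the remainder is bookkeeping with the loop of Figure~\ref{domainHplus02}.
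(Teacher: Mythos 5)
The paper itself contains no proof of Proposition~\ref{propositioninterior}: it is quoted from \cite{LL1}, and the only portion of the argument reproduced in the text is the domain analysis via $k_H$, $\tilde k_H$ and Figure~\ref{domainHplus02} (including the remark that for $c\in(c_H,C_H)$ the intersection points of $L(r)=c$ with the loop belong to $D_2$ or $D_4$, precisely because the integrals stay finite there). Your first paragraph reproduces that part correctly, so the substance of your proposal is the turning-point regularity argument, which must be judged on its own --- and it is sound. It also fits the paper's own framework: Section~\ref{32} already presents interior SS-CMC hypersurfaces as graphs $r=g(t)$ over the spacelike coordinate $t$, of which $f_2^{*}$ and $f_2^{**}$ in (\ref{f2positive})--(\ref{f2negative}) are the two inverse branches. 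Your key points check out: the mean curvature of a spacelike graph $r=g(t)$ depends on $g''$ linearly with nonvanishing coefficient, so the CMC condition is $g''=\Phi(g,g')$ with $\Phi$ smooth on $\{0<g<2M,\ |g'|<-h(g)\}$; and your appeal to ``uniqueness of the first-order relation'' is unproblematic because that relation has the form $f'=F(r)$ with no $f$ on the right (the square-root degeneracy sits only in the autonomous $g$-form, which you never use), so each inverted branch of $g$ is recovered by integrating an explicit function from the common value $\bar c_2$ at $r'$, once its first-integral constant is pinned down by $f'\to\pm\infty$ at $r'$, i.e.\ $l_2(r';c)=1$, i.e.\ $c=k_H(r')=c_2$ by the computation preceding (\ref{k2function}).

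The one step you assert but do not prove, and which the argument genuinely needs, is the nondegeneracy $g''(\bar c_2)=\Phi(r',0)\neq 0$: existence-uniqueness theory does not deliver it, and without it the inversion fails --- indeed, if $\Phi(r',0)=0$ then uniqueness forces $g\equiv r'$, and the matching with the nonconstant branches is impossible. Your claim that it is equivalent to $k_H'(r')\neq 0$ is true, but it requires a proof, which is short: $\Phi(r_0,0)=0$ exactly when the constant function $g\equiv r_0$ is a CMC solution with mean curvature $H$, which by Proposition~\ref{cylindrical} happens exactly when $H=\frac{2r_0-3M}{3\sqrt{r_0^3(2M-r_0)}}$, and a direct differentiation of (\ref{k2function}) shows this equality is precisely $k_H'(r_0)=0$; since $r_H$ is the unique critical point of $k_H$ and $c_2>c_H$ forces $r'\neq r_H$, the nondegeneracy holds (the same computation substantiates your earlier statement that $r'$ is a simple zero of $l_2^2-1$). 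With that line added, part (a) is complete, and part (b) is indeed the mirror argument with $\tilde k_H$ from (\ref{ktilde}), Proposition~\ref{propinteriorII}, and the region {\tt I$\!$I'} cylinder formula. Whether \cite{LL1} proceeds this way cannot be determined from this paper; a common alternative is to expand $f_2(r)-\bar c_2=\mp\sqrt{r'-r}\,\psi(r)$ with $\psi$ smooth and $\psi(r')\neq0$ and invert $(t-\bar c_2)^2=(r'-r)\psi^2(r)$ by the inverse function theorem --- your ODE route reaches the same conclusion without computing any expansion, at the price of the cylinder lemma above.
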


\subsection{Smooth SS-CMC hypersurfaces} \label{CompleteSmoothSSCMC}
Given an \SC hypersurface in the Schwarzschild exterior or interior, as long as its domain is defined near $r=2M$,
we can discuss the behavior of \SC hypersurfaces near $r=2M$ and join two \SC hypersurfaces in different region smoothly in the Kruskal extension.

Here we only list results of \SC hypersurfaces needed in next section and refer to \cite{LL1} for all the other cases.
\begin{itemize}
\item[(a)] If $c_H<c_2<-8M^3H$, from Proposition~\ref{propositioninterior} (a), for every \SC hypersurface $\Sigma^2$ defined near $r=2M$,
then the spacelike condition is preserved as $r\to 2M^-$. Since $f_2^*\to\infty$ as $r\to 2M^-$,
the image of $\Sigma^2$ touches the interface of region {\tt I$\!$I} and {\tt I}.
We can take $\Sigma^1$ in region {\tt I} with $c_1=c_2$ and suitable $\bar{c}_1$ which is determined by $\bar{c}_2$ such that
$\Sigma^1\cup\Sigma^2$ is a smooth \SC hypersurface in the Kruskal extension.
Similarly, since $f_2^{**}\to-\infty$ as $r\to 2M^-$,
the image of $\Sigma^2$ touches the interface of region {\tt I$\!$I} and {\tt I'}.
We can take $\Sigma^3$ in region {\tt I'} with $c_3=c_2$ and suitable $\bar{c}_3$ which is determined by $\bar{c}_2$ such that
$\Sigma^1\cup\Sigma^2\cup\Sigma^3$ is a smooth \SC hypersurface in the Kruskal extension.
\item[(b)] If $-8M^3H<c_4<C_H$, from Proposition~\ref{propositioninterior} (b), for every \SC hypersurface $\Sigma^4$ defined near $r=2M$,
then the spacelike condition is preserved as $r\to 2M^-$. Since $f_4^{**}\to-\infty$ as $r\to 2M^-$,
the image of $\Sigma^4$ touches the interface of region {\tt I$\!$I'} and {\tt I}.
We can take $\Sigma^1$ in region {\tt I} with $c_1=c_4$ and suitable $\bar{c}_1$ which is determined by $\bar{c}_4$ such that
$\Sigma^1\cup\Sigma^4$ is a smooth \SC hypersurface in the Kruskal extension.
Similarly, since $f_4^{*}\to\infty$ as $r\to 2M^-$,
the image of $\Sigma^4$ touches the interface of region {\tt I$\!$I'} and {\tt I'}.
We can take $\Sigma^3$ in region {\tt I'} with $c_3=c_4$ and suitable $\bar{c}_3$ which is determined by $\bar{c}_4$ such that
$\Sigma^1\cup\Sigma^4\cup\Sigma^3$ is a smooth \SC hypersurface in the Kruskal extension.
\item[(c)] If $c_1=-8M^3H$, from Proposition~\ref{scmc3prop1}, every \SC hypersurface $\Sigma^1$ is defined near $r=2M$,
and the spacelike condition is preserved as $r\to 2M^+$. Since $f_1(r)$ tends to a finite value as $r\to 2M^+$,
we can extend $f_1(r)$ at $r=2M$, and the image of $\Sigma^1$ touches the origin of the Kruskal extension.
We can take $\Sigma^3$ in region {\tt I'} with $c_3=c_1$ and suitable $\bar{c}_3$ which is determined by $\bar{c}_1$ such that
$\Sigma^1\cup\Sigma^3$ is a smooth \SC hypersurface in the Kruskal extension.
\end{itemize}

The upshot of the characterization of \SC hypersurfaces is:
\begin{thm}{\rm\cite{LL1}}\label{thmall}
For $H\in\mathbb{R}$, all smooth \SC hypersurfaces and their behaviors in the Schwarzschild spacetimes or in the Kruskal extension
are completely characterized, by two constants $c$ and $\bar{c}$.
\end{thm}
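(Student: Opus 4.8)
The plan is to assemble the theorem from the region-by-region descriptions already established, viewing a smooth \SC hypersurface in the Kruskal extension as a union of graphical pieces, one in each of the regions {\tt I}, {\tt I$\!$I}, {\tt I'}, {\tt I$\!$I'} that it visits, and then showing that the smooth matching across the horizons $r=2M$ forces all pieces to share a single shape constant $c$ while pinning down the additive constants in terms of one chosen value $\bar{c}$.

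First I would reduce to a profile curve. By spherical symmetry, a smooth \SC hypersurface is determined by a spacelike curve in the quotient $(T,X)$-plane. On the portion lying in region {\tt I} or {\tt I'} ($r>2M$) the hypersurface is a graph $f_1(r)$ or $f_3(r)$; on the portion in region {\tt I$\!$I} or {\tt I$\!$I'} ($r<2M$) the future timelike direction is $-\partial_r$, so the spacelike condition forces a graph $r=g(t)$, whose inverse supplies $f_2(r)$ or $f_4(r)$, unless the profile is a constant-$r$ slice. Propositions~\ref{scmc3prop1} through \ref{propinteriorII} then give the explicit form of each piece: once $H$ is fixed, each piece is labelled by a single shape constant ($c_1,c_2,c_3,c_4$) and one additive constant ($\bar{c}_1$, and so on).

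Next I would link the pieces. The analysis summarized in subsection~\ref{CompleteSmoothSSCMC} shows that crossing a horizon smoothly forces the shape constant to be preserved, e.g.\ $c_1=c_2$ and $c_2=c_3$ in item~(a), and likewise through {\tt I$\!$I'} in item~(b); hence a common value $c$ propagates through every region the hypersurface meets. The additive constant in each adjacent region is then uniquely determined by requiring the two graphs to agree at the interface, so a single free additive constant $\bar{c}$ survives. Thus, for given $H$, the whole hypersurface is labelled by the pair $(c,\bar{c})$. Completeness is checked by running through the ranges of $c$ in Figure~\ref{domainHplus02} and Proposition~\ref{propositioninterior}: generic values give a hypersurface crossing one horizon; the boundary value $c=-8M^3H$ gives the pieces through the origin as in item~(c); and the degenerate values $c=c_H$ and $c=C_H$, where the profile pinches to a vertical line $r=r_0$, correspond to the cylindrical slices of Proposition~\ref{cylindrical}, whose mean curvature $H(r_0)$ records the matching of $c$ to $r_H$ or $R_H$.

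The main obstacle is establishing genuine smoothness, not merely $C^0$ continuity, of the glued surface across $r=2M$, where the $(r,t)$ coordinates degenerate. This requires passing to the Kruskal coordinates $(U,V)$ and verifying, via the change of variables (\ref{trans}), that the one-sided limits of the $f_i$ and of their derivatives match after the substitution, while confirming that the interior spacelike condition $l_i>1$ and the horizon asymptotics of $l_i$ persist up to $r=2M$. Once this matching in $(U,V)$ is verified and the case enumeration is seen to exhaust all sign and range possibilities, the characterization by $(c,\bar{c})$ follows.
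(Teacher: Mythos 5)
Your proposal is correct and takes essentially the same route as the paper: Theorem~\ref{thmall} is stated there as the ``upshot'' of the Section~\ref{Kru} summary, namely the explicit regional solutions of Propositions~\ref{scmc3prop1}--\ref{propositioninterior} together with the horizon-gluing cases (a)--(c) of subsection~\ref{CompleteSmoothSSCMC} (all quoted from \cite{LL1}), and your plan assembles exactly these ingredients in the same order --- regional graphs labelled by $(c_i,\bar{c}_i)$, propagation of the shape constant $c$ and determination of the additive constants across $r=2M$, and smoothness checked in Kruskal coordinates $(U,V)$. The technical crux you flag (genuine smoothness across the horizon rather than mere $C^0$ matching) is precisely the step the paper itself defers to \cite{LL1}, so your outline sits at the same level of detail as the paper's own treatment.
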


\section{CMC foliation in the Kruskal extension} \label{CMCfoliation}
\subsection{A conjecture of CMC foliation}\label{conjecture}
A spherically symmetric hypersurface can be represented by a curve in the Kruskal plane,
and it is convenient to study the curve in null coordinates as $\Sigma=(U(s),V(s))$.
In such coordinates, the spacelike condition is equivalent to the tangent vector of $\Sigma$ being spacelike,
that is $V'(s)U'(s)>0$ from (\ref{SchMetric2}). Hence the curve can be written as a graph of $V(U)$ with
\begin{align*}
\frac{\mathrm{d}V}{\mathrm{d}U}=\frac{V'(s)}{U'(s)}>0.
\end{align*}
That is, $V(U)$ is a monotone increasing function.

\begin{defn}\label{defn1}
An \SC hypersurface $\Sigma=(U(s),V(s))$ in the Kruskal plane is called {\it $T$-axisymmetric} if
$\Sigma$ is symmetric with respect to the $T$-axis: $U+V=0$, or equivalently,
$\Sigma$ satisfies the following condition:
\begin{align}
\mbox{If } (U,V)\in\Sigma, \mbox{ then } (-V,-U)\in\Sigma. \tag{$*$} \label{Tsymm}
\end{align}
In the following, we use \TSC to represent $T$-axisymmetric \SC$\!\!$.
\end{defn}

In \cite{LL1}, we have constructed all smooth \SC hypersurfaces.
Every \SC hypersurface is characterized by two parameters $c$ and $\bar{c}$.
Now we consider \SC hypersurfaces with $c\in(c_H,C_H)$,
and cylindrical hypersurfaces $r=r_H$ in region {\tt I$\!$I} and $r=R_H$ in region {\tt I$\!$I'},
which correspond to $c=c_H$ and $c=C_H$, respectively.
Recall that $c_H<-8M^3H\leq 0<C_H$ when $H\geq 0$ and $c_H<0<-8M^3H<C_H$ when $H<0$.
If we put superscripts $+$ and $-$ on functions $k_H(r)$ in (\ref{k2function})
and $\tilde{k}_{H}(r)$ in (\ref{ktilde}) to represent their increasing and decreasing part, respectively,
we know that each $c\in(c_H,C_H), c\neq 0$ determines two families \SC hypersurfaces,
which can be distinguished by $k^+_H(r), k^-_H(r), \tilde{k}^+_H(r)$ or $\tilde{k}^-_H(r)$.
Denote their associated smooth \SC hypersurfaces by $\Sigma^+_{H,c,\bar{c}}$,
$\Sigma^-_{H,c,\bar{c}}, \tilde{\Sigma}^+_{H,c,\bar{c}}$ and $\tilde{\Sigma}^-_{H,c,\bar{c}}$, respectively.
For $c=0$, it determines one family of \SC hypersurfaces  belonging to $\tilde{\Sigma}^-_{H,c,\bar{c}}$ if $H\geq 0$,
or belonging to $\Sigma^+_{H,c,\bar{c}}$ if $H<0$.
The following Proposition shows the existence of \TSC hypersurfaces in each family.

\begin{prop}\label{propTaxisymm}
Among the family of \SC hypersurfaces $\Sigma^-_{H,c,\bar{c}}$, there exists a unique \TSC hypersurface $\Sigma^-_{H,c}$.
Same conclusion also holds for $\Sigma^+_{H,c,\bar{c}},\, \tilde{\Sigma}^+_{H,c,\bar{c}}$ and  $\tilde{\Sigma}^-_{H,c,\bar{c}}$.
Here $c\in(c_H,C_H)$ and $H\in\mathbb{R}$.
\end{prop}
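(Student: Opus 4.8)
The plan is to exploit the fact that, for fixed $H$ and $c$, the parameter $\bar c$ is merely a time-translation parameter, which in the Kruskal plane acts as a hyperbolic rotation (boost), whereas $T$-axisymmetry is invariance under the reflection $R\colon(U,V)\mapsto(-V,-U)$ across the line $U+V=0$. So the statement becomes a fixed-point problem for $R$ acting on a one-parameter boost orbit. First I would record that $R$ is an isometry of the Kruskal metric~(\ref{SchMetric2}): since $r$ is a function of the product $UV$, and $R$ fixes $UV$ while sending $\mathrm dU\,\mathrm dV\mapsto\mathrm dU\,\mathrm dV$, the metric is preserved; moreover $R$ fixes the time-orientation $\partial_T$ and maps region {\tt I$\!$I} to itself (interchanging its two ends while swapping regions {\tt I} and {\tt I'}), so it carries an \SC hypersurface to an \SC hypersurface with the \emph{same} mean curvature $H$. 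Because $R$ fixes each value of $r$, it preserves the defining data $l_2$ (equivalently $k_H$), hence the constant $c$ and the branch label $\pm$; thus $R$ maps the family $\Sigma^-_{H,c,\bar c}$ into itself, and likewise $\Sigma^+_{H,c,\bar c},\tilde\Sigma^+_{H,c,\bar c},\tilde\Sigma^-_{H,c,\bar c}$.

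Next I would make the $\bar c$-action explicit. From~(\ref{trans}) a shift $t\mapsto t+a$ corresponds to $(U,V)\mapsto(\mathrm e^{-a/4M}U,\mathrm e^{a/4M}V)=:B_a(U,V)$, so varying the additive constant $\bar c$ realizes the one-parameter boost group $\{B_a\}$ and the family is a single orbit, $\Sigma_{\bar c}:=\Sigma^-_{H,c,\bar c}=B_{\bar c}\Sigma_0$ after an affine normalization of the parameter. A direct computation gives the conjugation relation $R\,B_a\,R=B_{-a}$, equivalently $R\,B_a=B_{-a}\,R$. Since $R$ preserves the family, $R\Sigma_0=\Sigma_{d}=B_d\Sigma_0$ for some constant $d=:\bar c_1$, and therefore $R\Sigma_{\bar c}=RB_{\bar c}\Sigma_0=B_{-\bar c}R\Sigma_0=B_{\bar c_1-\bar c}\Sigma_0=\Sigma_{\bar c_1-\bar c}$; reflection simply reverses the orbit. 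The surface is $T$-axisymmetric precisely when $R\Sigma_{\bar c}=\Sigma_{\bar c}$, i.e. $\bar c=\bar c_1-\bar c$, whose unique solution is $\bar c=\bar c_1/2$. This delivers existence and uniqueness simultaneously and uniformly for all four families. For uniqueness of the fixed point I also need the orbit map $\bar c\mapsto\Sigma_{\bar c}$ to be injective, i.e. no surface is invariant under a nontrivial boost; this holds because the $B_a$-invariant curves are the hyperbolae $UV=\mathrm{const}$, whereas each \SC hypersurface meets a genuine range of $r$-values.

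It is worth recording the concrete picture behind $\bar c_1/2$. In region {\tt I$\!$I} the surface is the union of the branches $f_2^{*}$ and $f_2^{**}$ meeting at the turning point, and by~(\ref{f2positive})--(\ref{f2negative}) these differ only by the sign of the common integral, so $f_2^{*}(r)+f_2^{**}(r)=\bar c_2+\bar c_2'$. Invariance under $t\mapsto-t$ forces $f_2^{*}(r)=-f_2^{**}(r)$, hence $\bar c_2+\bar c_2'=0$; together with the smoothness normalization $\bar c_2=\bar c_2'$ from Proposition~\ref{propositioninterior} this pins the turning point to $t=0$ and recovers the unique choice. This explicit version is the route I would use to make the boost computation rigorous.

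The main obstacle I anticipate is the bookkeeping across the interfaces: I must ensure that $T$-axisymmetry is imposed on the \emph{complete} glued hypersurface in the Kruskal extension, and that once the region-{\tt I$\!$I} (resp. {\tt I$\!$I'}) piece is centered at $t=0$, the unique smooth exterior continuations into {\tt I} and {\tt I'} guaranteed by Theorem~\ref{thmall} are genuinely interchanged by $R$ rather than producing a mismatch. Concretely this amounts to checking that the matching constants $\bar c_1,\bar c_3$ determined by $\bar c_2$ in Section~\ref{CompleteSmoothSSCMC} are compatible with $R$, so that the reflected {\tt I}-continuation coincides with the {\tt I'}-continuation; since $R$ preserves the \C equation, $c$, and $H$, and the continuation is uniquely determined by the matching data, this compatibility should follow, but it is the step requiring the most care.
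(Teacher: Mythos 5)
You are correct, and your route is genuinely different from the paper's. The paper proves the proposition by explicit construction: for each family it normalizes $\bar c$ so that the turning point (the solution of $k_H^{\pm}(r)=c$ or $\tilde{k}_H^{\pm}(r)=c$, or the $r=2M$ crossing when $c=-8M^3H$) sits at $t=0$, writes both halves of the hypersurface in Kruskal coordinates (formulas (\ref{case11})--(\ref{case12}) and (\ref{Vcoord}) and their region {\tt I}/{\tt I'} analogues), and verifies condition (\ref{Tsymm}) directly from $f_-'=-f_+'$, resp.\ $f_3'=-f_1'$; uniqueness is asserted but not argued. Your boost--reflection fixed-point argument yields existence and uniqueness simultaneously and uniformly for all four families, and it correctly isolates the two facts that need proof: (i) that $R$ maps each family into itself with the same $(H,c)$ and branch label, which holds as you say because $R$ acts as $t\mapsto-t$, $r\mapsto r$ region by region, hence swaps $f^{*}\leftrightarrow f^{**}$ (and $f_1\leftrightarrow f_3$) while preserving $l$ and the turning radius, with Theorem~\ref{thmall} identifying the image within the classification; and (ii) injectivity of the orbit map $\bar c\mapsto\Sigma_{\bar c}$. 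On (ii), your appeal to ``$B_a$-invariant curves are the hyperbolae'' is stronger than what you actually have — invariance under a single boost is weaker than invariance under the one-parameter group — but the gap closes immediately from the graph structure: $B_a$ either maps a branch onto itself, forcing $a=0$, or swaps the two branches, and $f^{*}+f^{**}=\mathrm{const}$ again forces $a=0$. Your third paragraph (turning point pinned at $t=0$) recovers exactly the paper's normalization, and the gluing compatibility you flag at the end is automatic: $R\Sigma$ is the isometric image of a smooth \SC hypersurface, hence itself one, and the smooth continuation across the horizons is unique. The trade-off: the paper's case-by-case computation is longer, but the explicit $(U,V)$ parameterizations it produces are precisely what section~\ref{criteria} and the later foliation arguments consume; your argument is cleaner and actually settles the uniqueness claim, at the price of leaning on the classification Theorem~\ref{thmall}.
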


\begin{proof}
For $\Sigma^-_{H,c,\bar{c}}$ with $c\in(c_H,0)$, hypersurfaces are in the Schwarzschild interior (region {\tt I$\!$I}).
Let $r_{H,c}^-$ be the solution of $k_H^-(r)=c$.
Choose $\bar{c}$ such that $f(r_{H,c}^-;H,c,\bar{c})=0$,
where the formula of $f$ is given in (\ref{f2positive}) or (\ref{f2negative})
\footnote{We will omit the subscripts $1,2,3,4$ of $f$ and $l$ as long as there is no confusing.}.
Denote this \SC hypersurface by $\Sigma^-_{H,c}$.
The hypersurface $\Sigma^-_{H,c}$ in $U+V\geq 0$ region corresponds to $f'(r)<0$ with domain $(0,r_{H,c}^-]$ in the Schwarzschild interior,
and it has nonnegative $t$-value:
$t=\int_{r_{H,c}^-}^rf_-'(x;c)\mathrm{d}x$, where $f'_-=\frac1{h}\sqrt{\frac{l^2}{l^2-1}}$ (see Proposition~\ref{prop6}, equation~(\ref{f2negative})).
From the table below (\ref{NullCoordinatesuv}), the corresponding $(U,V)$ coordinates are
\begin{align}
\begin{array}{l}
U(r;c)
=-\mathrm{e}^{-\frac1{4M}\left(t-r-2M\ln|r-2M|\right)}
=-\mathrm{e}^{-\frac1{4M}\left(\int^{r}_{r^-_{H,c}}f_-'(x;c)\mathrm{d}x-r-2M\ln|r-2M|\right)},\\
V(r;c)
=\mathrm{e}^{\frac1{4M}\left(t+r+2M\ln|r-2M|\right)}
=\mathrm{e}^{\frac1{4M}\left(\int^{r}_{r^-_{H,c}}f_-'(x;c)\mathrm{d}x+r+2M\ln|r-2M|\right)}.
\end{array} \label{case11}
\end{align}

On the other hand, the hypersurface $\Sigma^-_{H,c}$ in $U+V\leq 0$ region corresponds to $f'(r)>0$ with domain $(0,r_{H,c}^-]$ in the Schwarzschild interior,
and it has nonpositive $t$-value:
$t=\int_{r_{H,c}^-}^rf_+'(x;c)\mathrm{d}x$, where $f'_+=\frac1{-h}\sqrt{\frac{l^2}{l^2-1}}$.
The corresponding $(U,V)$ coordinates are
\begin{align}
\begin{array}{l}
U(r;c)=-\mathrm{e}^{-\frac1{4M}\left(t-r-2M\ln|r-2M|\right)}
=-\mathrm{e}^{-\frac1{4M}\left(\int^{r}_{r^-_{H,c}}f_+'(x;c)\mathrm{d}x-r-2M\ln|r-2M|\right)},\\
V(r;c)
=\mathrm{e}^{\frac1{4M}\left(t+r+2M\ln|r-2M|\right)}
=\mathrm{e}^{\frac1{4M}\left(\int^{r}_{r^-_{H,c}}f_+'(x;c)\mathrm{d}x+r+2M\ln|r-2M|\right)}.
\end{array} \label{case12}
\end{align}
Since $f_-'(x;c)=-f_+'(x;c)$ for all $x\in(0,r_{H,c}^-]$, (\ref{case11}) and (\ref{case12}) satisfy the condition (\ref{Tsymm}).

For $\Sigma^+_{H,c,\bar{c}}$ with $c\in(c_H,-8M^3H)$, these hypersurfaces pass through regions {\tt I, I$\!$I}, and {\tt I'}
(See discussion (a) before Theorem~\ref{thmall}).
Let $r_{H,c}^+$ be the solution of $k_H^+(r)=c$.
Choose $\bar{c}$ such that $f(r_{H,c}^+;H,c,\bar{c})=0$, where the formula of $f$ is given in (\ref{f2positive}) or (\ref{f2negative}).
Denote
\begin{align}
\bar{f}'(r)=\frac{r^4}{(Hr^3+c_1)^2+r^3(r-2M)-(Hr^3+c_1)\sqrt{(Hr^3+c_1)^2+r^3(r-2M)}},\label{barf}
\end{align}
then $\Sigma^+_{H,c}$ in $U+V\geq 0$ region satisfies $f'(r)=-\frac1{h(r)}+\bar{f}'(r)$.
Therefore, we have
\begin{align*}
t=f(r)
=&\int_{r_{H,c}^+}^r\left(-\frac1{h(x)}+\bar{f}'(x)\right)\mathrm{d}x \\
=&-r-2M\ln|r-2M|+r_{H,c}^++2M\ln|r_{H,c}^+-2M|+\int_{r_{H,c}^+}^r\bar{f}'(x)\mathrm{d}x.
\end{align*}
It leads to
{\small
\begin{align}
U(r;c)&=\left\{\begin{array}{ll}
\mathrm{e}^{-\frac{u}{4M}} & \mbox{in region {\tt I}}\\
-\mathrm{e}^{-\frac{u}{4M}} & \mbox{in region {\tt I$\!$I}}
\end{array}\right.
=\left\{\begin{array}{ll}
\mathrm{e}^{-\frac{1}{4M}(t-r-2M\ln|r-2M|)} & \mbox{in region {\tt I}}\\
-\mathrm{e}^{-\frac{1}{4M}(t-r-2M\ln|r-2M|)} & \mbox{in region {\tt I$\!$I}}
\end{array}\right. \notag \\
&=(r-2M)\,\mathrm{e}^{-\frac1{4M}\left(-2r+r_{H,c}^++2M\ln|r_{H,c}^+-2M|+\int_{r_{H,c}^+}^r\bar{f}'(x)\,\mathrm{d}x\right)} \notag \\
&=(r-2M)\,\mathrm{e}^{\frac1{4M}\left(2r-r_{H,c}^+-2M\ln|r_{H,c}^+-2M|-\int_{r_{H,c}^+}^r\bar{f}'(x)\,\mathrm{d}x\right)}. \notag \\
V(r;c)
&=\mathrm{e}^{\frac{v}{4M}}=\mathrm{e}^{\frac1{4M}(t+r+2M\ln|r-2M|)}
=\mathrm{e}^{\frac1{4M}\left(r_{H,c}^++2M\ln|r_{H,c}^+-2M|+\int_{r_{H,c}^+}^r\bar{f}'(x)\,\mathrm{d}x\right)}. \label{Vcoord}
\end{align}}

On the other hand, $\Sigma^+_{H,c}$ in $U+V\leq 0$ region satisfies $f'(r)=\frac1{h(r)}-\bar{f}'(r)$, and we have
\begin{align*}
U(r;c)&=-\mathrm{e}^{-\frac1{4M}\left(-r_{H,c}^+-2M\ln|r_{H,c}^+-2M|-\int_{r_{H,c}^+}^r\bar{f}'(x)\,\mathrm{d}x\right)} \\
      &=-\mathrm{e}^{\frac1{4M}\left(r_{H,c}^++2M\ln|r_{H,c}^+-2M|+\int_{r_{H,c}^+}^r\bar{f}'(x)\,\mathrm{d}x\right)} \\
V(r;c)&=-(r-2M)\,\mathrm{e}^{\frac1{4M}\left(2r-r_{H,c}^+-2M\ln|r_{H,c}^+-2M|-\int_{r_{H,c}^+}^r\bar{f}'(x)\,\mathrm{d}x\right)}.
\end{align*}
Hence $\Sigma^+_{H,c}$ satisfies condition (\ref{Tsymm}), and $\Sigma^+_{H,c}$ is a \TSC hypersurface.

When $c=-8M^3H$, $\tilde{\Sigma}_{H,c,\bar{c}}^-$ pass through region {\tt I} and {\tt I'} (See discussion (c) before Theorem~\ref{thmall}).
Choose $\bar{c}$ such that $f(2M;H,c=-8M^3H,\bar{c})=0$
in the Schwarzschild exterior,
and denote the hypersurface by $\tilde{\Sigma}^-_{H,-8M^3H}$.
The expression of $\tilde{\Sigma}^-_{H,-8M^3H}$ is
\begin{align*}
&\left\{
\begin{array}{l}
U(r)=\sqrt{r-2M}\,\mathrm{e}^{\frac1{4M}(-\int_{2M}^rf_1'(x)\,\mathrm{d}x+r)}\\
V(r)=\sqrt{r-2M}\,\mathrm{e}^{\frac1{4M}(\int_{2M}^rf_1'(x)\,\mathrm{d}x+r)}
\end{array}\right.\quad\mbox{in region {\tt I}},\\
&\left\{
\begin{array}{l}
U(r)=-\sqrt{r-2M}\,\mathrm{e}^{\frac1{4M}(-\int_{2M}^rf_3'(x)\,\mathrm{d}x+r)}\\
V(r)=-\sqrt{r-2M}\,\mathrm{e}^{\frac1{4M}(\int_{2M}^rf_3'(x)\,\mathrm{d}x+r)}
\end{array}\right.\quad\mbox{in region {\tt I'}},
\end{align*}
where
\begin{align*}
f'_3(r)=-f_{1}'(r)=-H\left(\frac{r}{r-2M}\right)^{\frac12}
\left(\frac{r(r^2+2Mr+4M^2)^2}{r^3+H^2(r-2M)(r^2+2Mr+4M^2)^2}\right)^{\frac12}.
\end{align*}
Hence $\tilde{\Sigma}^-_{H,-8M^3H}$ satisfies condition (\ref{Tsymm}),
and $\tilde{\Sigma}^-_{H,-8M^3H}$ is $T$-axisymmetric.

The other cases can be discussed similarly.
\end{proof}

With Definition~\ref{defn1} and Proposition~\ref{propTaxisymm},
we can rephrase the conjecture of Malec and \'{O} Murchadha in \cite{MO} as
\TSC hypersurfaces with $H$ fixed and $c$ varied from $c_H$ to $C_H$ form a foliation.
It will be explained in more details below.
We will discuss the case $H>0$ only. The case $H\leq 0$ is similar.

\begin{figure}[h]
\psfrag{A}{$0$}
\psfrag{B}{$c_H$}
\psfrag{C}{$C_H$}
\psfrag{E}{$-8M^3H$}
\psfrag{G}{$c=c_H$}
\psfrag{H}{$c=C_H$}
\psfrag{I}{$c=0$}
\psfrag{J}{$c=-8M^3H$}
\psfrag{r}{$r$}
\psfrag{M}{$R_H$}
\psfrag{N}{$r_H$}
\psfrag{P}{$2M$}
\psfrag{W}{$k_H^-$}
\psfrag{X}{$k_H^+$}
\psfrag{Y}{$\tilde{k}_H^-$}
\psfrag{Z}{$\tilde{k}_H^+$}
\psfrag{Q}{$T$}
\psfrag{S}{$X$}
\hspace*{-15mm}
\includegraphics[height=52mm, width=102mm]{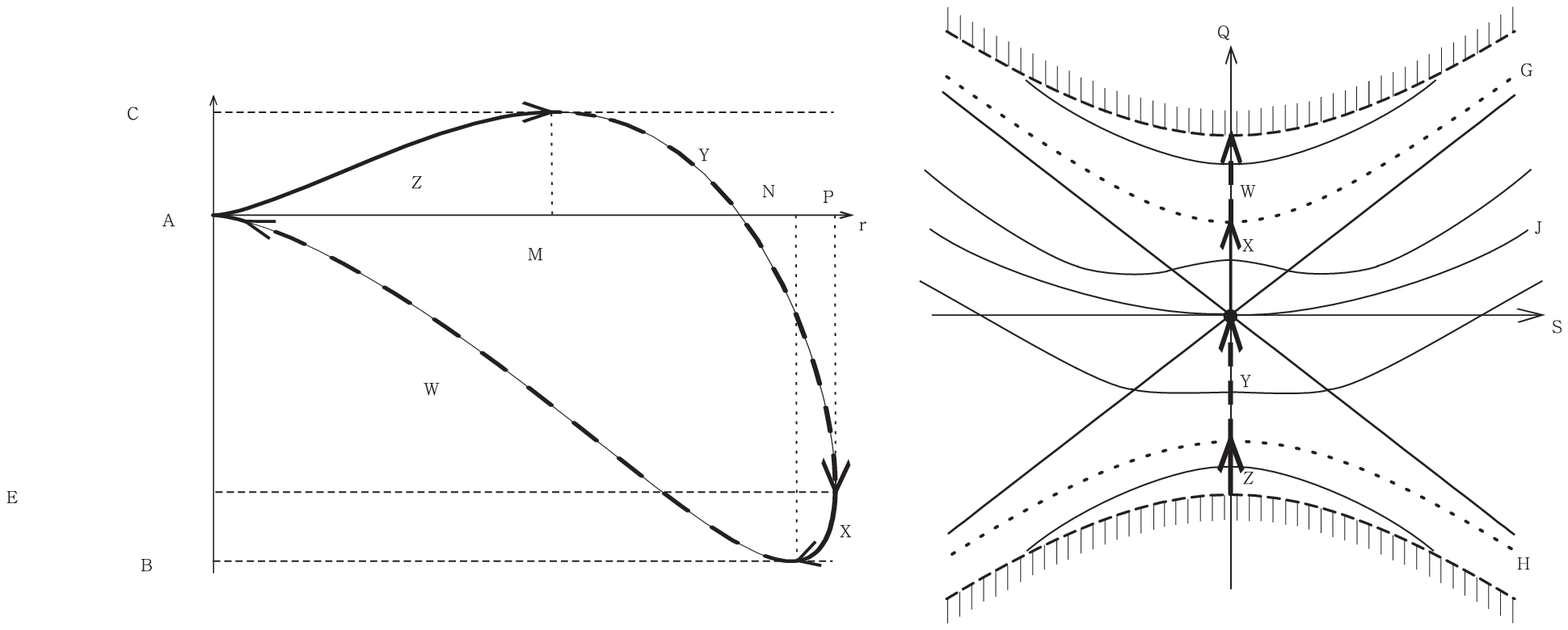}
\caption{Graphs of $k_H(r)=k_H^-\cup k_H^+$ and
$\tilde{k}_H(r)=\tilde{k}_H^+\cup\tilde{k}_H^-$, and a possible \TSC foliation for $H>0$.}
\label{CMCFOLIP4}
\end{figure}

Consider the graphs of $\tilde{k}_H^+\cup\tilde{k}_H^-\cup k_H^+\cup k_H^-$ as a loop,
which is illustrated in the left picture of Figure \ref{CMCFOLIP4}.
When we circle the loop clockwise from the origin,
the first piece is $\tilde{k}_H^+$ with $c\in(0,C_H)$ and their corresponding \TSC hypersurfaces are $\tilde{\Sigma}_{H,c}^+$.
Let $\tilde{r}_{H,c}^+$ be the solution of $\tilde{k}_H^+(r)=c$,
then the hypersurface $\tilde{\Sigma}_{H,c}^+$ intersects the $r$-axis at $r=\tilde{r}^+_{H,c}$ in the Schwarzschild interior (region {\tt I$\!$I'})
and from (\ref{trans}), $\tilde{\Sigma}^+_{H,c}$ intersects $T$-axis at
$T=-\sqrt{2M-\tilde{r}^+_{H,c}}\,\mathrm{e}^{\frac{\tilde{r}^+_{H,c}}{4M}}$ in the Kruskal plane.
The increasing property of $\tilde{k}_H^+$ implies that $\tilde{r}^+_{H,c}$ increases as $c$ increases,
so the $T$-intercept increases as well.

These \TSC hypersurfaces $\tilde{\Sigma}^+_{H,c}$ from $c=0$ to $c=C_H$
are conjectured to form a foliation between two hyperbolas $r=0$ and $r=R_H$,
where $R_H$ is the solution to $\tilde{k}_H^+(r)=C_H$.
When $c=C_H$, we take the cylindrical hypersurface $r=R_H$ to be the \TSC one, and call it $\tilde{\Sigma}_{H,C_H}^+$.
Denote $\tilde{\Sigma}_{H,0<c\leq C_H}^+=\{\tilde{\Sigma}_{H,c}^+|0<c\leq C_H\}$.

The second piece is $\tilde{k}_H^-$ with $c$ decreasing from $C_H$ to $-8M^3H$.
Let $\tilde{r}_{H,c}^-$ be the solution of $\tilde{k}_{H}^-(r)=c$,
then the corresponding \TSC hypersurface $\tilde{\Sigma}_{H,c}^-$ intersects the
$r$-axis at $r=\tilde{r}_{H,c}^-$ in the Schwarzschild interior (region {\tt I$\!$I'}),
and $\tilde{\Sigma}_{H,c}^-$ intersects $T$-axis at $T=-\sqrt{2M-\tilde{r}^-_{H,c}}\,\mathrm{e}^{\frac{\tilde{r}^-_{H,c}}{4M}}$ in the Kruskal plane.
The $T$-intercept increases when $c$ decreases.
When $c=-8M^3H$, the \TSC hypersurface $\tilde{\Sigma}^-_{H,-8M^3H}$ passes through the origin in the Kruskal plane.
Hence $\tilde{\Sigma}_{H,c}^-$ for $c$ ranging from $C_H$ to $-8M^3H$ lie between the hyperbola $r=R_H$ and $\tilde{\Sigma}^-_{H,-8M^3H}$,
and they are conjectured to form a foliation.
Denote $\tilde{\Sigma}^-_{H,C_H>c\geq -8M^3H}=\{\tilde{\Sigma}^-_{H,c}|C_H>c\geq -8M^3H\}$.

The third piece is $k_H^+$ with $c$ decreasing from $-8M^3H$ to $c_H$.
If $r_{H,c}^+$ is the solution of $k_{H}^+(r)=c$,
the \TSC hypersurface $\Sigma^+_{H,c}$ intersects $T$-axis at $T=\sqrt{2M-r^+_{H,c}}\,\mathrm{e}^{\frac{r^+_{H,c}}{4M}}$,
and the $T$-intercept increases when $c$ decreases.
The critical value $c_H$ determines a \TSC hypersurface $r=r_H$,
which is a cylindrical hypersurface, and we call it $\Sigma_{H,c_H}^+$.
Denote $\Sigma^+_{H,-8M^3H>c>c_H}=\{\Sigma_{H,c}^+|-8M^3H>c>c_H\}$.
It is conjectured that \TSC hypersurfaces in $\Sigma^+_{H,-8M^3H>c>c_H}$ form a foliation between $\tilde{\Sigma}^-_{H,-8M^3H}$
and the hyperbola $r=r_H$.

Finally, consider $k_H^-$ with $c$ increasing from $c_H$ to $0$.
Denote $\Sigma^-_{H,c_H\leq c<0}=\{\Sigma_{H,c}^-|c_H\leq c<0\}$.
These \TSC hypersurfaces $\Sigma_{H,c}^-$ lie between two hyperbolas $r=r_{H}$ and $r=0$ in region {\tt I$\!$I} with the
$T$-intercepts $T=\sqrt{2M-r^-_{H,c}}\ \mathrm{e}^{\frac{r^-_{H,c}}{4M}}$,
where $r_{H,c}^-$ is the solution of $k_H^-(r)=c$, and the $T$-intercept increases when $c$ increases.
So $\Sigma^-_{H,c_H\leq c<0}$ are conjectured to form a foliation between two hyperbolas $r=r_{H}$ and $r=0$.

As we move along the loop from $\tilde{k}_H^+$, $\tilde{k}_H^-$, $k_H^+$ to $k_H^-$,
the change of their $T$-intercepts is indicated in the right picture of Figure \ref{CMCFOLIP4}.
In these notions, Malec and \'{O} Murchadha's conjecture can be summarized as:

\begin{conj}[Malec, Edward and \'{O} Murchadha, Niall, \cite{MO}]\label{conj}
Given any constant mean curvature $H$, the Kruskal extension can be foliated by the \TSC hypersurfaces
$\{\Sigma_H\}=\tilde{\Sigma}_{H,0<c\leq C_H}^+\cup\tilde{\Sigma}_{H,C_H>c>-8M^3H}^-\cup\Sigma_{H,-8M^3H\geq c>c_H}^+\cup\Sigma_{H,c_H\leq c<0}^-$.
\end{conj}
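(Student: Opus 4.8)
The plan is to verify the two defining properties of a foliation for the family $\{\Sigma_H\}$ of Proposition~\ref{propTaxisymm}: the leaves are pairwise disjoint and their union is the whole Kruskal extension. Because every leaf satisfies the $T$-axisymmetry~(\ref{Tsymm}) and, being spacelike, is a monotone increasing graph $V(U)$, I would first reduce both properties to the closed half $U+V\geq 0$, the other half following by the reflection $(U,V)\mapsto(-V,-U)$. In that half each leaf can be parametrized by the Schwarzschild radius $r$, so that comparing two leaves reduces to comparing their Kruskal coordinates at equal $r$. Disjointness then becomes the statement that, at each admissible $r$, the crossing point depends strictly monotonically on the parameter $c$, while covering becomes the statement that, as $c$ ranges over its interval for that $r$, the crossing point sweeps out the entire arc of the hyperbola $r=\mathrm{const}$ lying in $U+V\geq 0$. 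The behaviour on the boundary $U+V=0$ is already explicit: along each arc $\tilde k_H^+,\tilde k_H^-,k_H^+,k_H^-$ the $T$-intercept equals $\pm\sqrt{2M-r_{H,c}}\,\mathrm{e}^{r_{H,c}/4M}$, which by the monotonicity of $k_H^\pm$ and $\tilde k_H^\pm$ is a strictly monotone function of $c$ sweeping the whole $T$-axis as $c$ traverses the loop from $c_H$ to $C_H$.

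The crux is the monotonicity-in-$c$ estimate off the axis, for which I would differentiate the closed forms~(\ref{case11})--(\ref{Vcoord}) directly. Since $U(r;c)$ and $V(r;c)$ are exponentials of integrals of $f'(x;c)=\pm h(x)^{-1}\sqrt{l^2/(l^2-1)}$ with $l=l(x;H,c)$, the sign of $\partial_c V(r;c)$ (equivalently $\partial_c U$, or $\partial_c t$ at fixed $r$) is governed by the sign of $\int \partial_c f'(x;c)\,\mathrm{d}x$ over the relevant $r$-interval. In the interior regions {\tt I$\!$I} and {\tt I$\!$I'} the integrand $\partial_c f'$ is of one sign and the estimate closes immediately; together with the ordering of the $T$-intercepts this already gives that the interior leaves are disjoint, vary continuously, and fill the two interior regions, which is the content of Theorem~\ref{CMCfoliation1} and Theorem~\ref{localCMCfoliation34}. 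For the leaves that also enter the exterior I would use the regularized decomposition $f'=-h^{-1}+\bar{f}'$ from~(\ref{barf}) to keep the integrals convergent at the horizon and to isolate the part whose $c$-derivative must be controlled.

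The step I expect to be the genuine obstacle is pushing this sign estimate through in the exterior regions {\tt I} and {\tt I'} when $H\neq 0$: there the leaves are asymptotically null and $r$ runs over the noncompact interval $(2M,\infty)$, so $\partial_c f'(x;c)$ changes character and the tail integral $\int^{\infty}\partial_c f'\,\mathrm{d}x$ must be controlled by a delicate cancellation rather than a pointwise sign, which is precisely where only numerical evidence is presently available. In the maximal case $H=0$ the integrand collapses to a tractable expression and the estimate does go through in {\tt I} and {\tt I'} as well, yielding the complete maximal foliation. Granting the sign estimate in all four arcs, the remainder is routine bookkeeping: strict monotonicity at each $r$ gives disjointness in $U+V\geq 0$, hence globally by symmetry; the limiting behaviour as $c$ approaches the endpoints $c_H,C_H$ (where the leaves degenerate to the cylindrical hypersurfaces $r=r_H$ and $r=R_H$ or pass through the origin) shows the crossing points exhaust each hyperbola, giving covering; and continuity of $c\mapsto\Sigma_{H,c}$ makes $\{\Sigma_H\}$ a genuine foliation of the Kruskal extension for the given $H$.
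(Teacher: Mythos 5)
Your overall strategy is the same as the paper's: reduce to the half-plane $U+V\geq 0$ by $T$-axisymmetry, compare leaves at equal $r$ through a monotonicity-in-$c$ criterion for a Kruskal coordinate at a reference radius, prove the criterion in the interior regions, treat $H=0$ separately in the exterior, and leave the exterior case $H\neq 0$ open (which the paper also does; the statement remains a conjecture there, only partially verified). However, there is a genuine gap in your treatment of the interior regions. You assert that the sign of $\partial_c V(r;c)$ is ``governed by the sign of $\int \partial_c f'(x;c)\,\mathrm{d}x$'' and that in regions {\tt I$\!$I} and {\tt I$\!$I'} this integrand has one sign so ``the estimate closes immediately.'' This is not so: $\ln V(r;c)$ involves $\int_{R(c)}^r f'(x;c)\,\mathrm{d}x$, whose endpoint $R(c)=r_{H,c}$ moves with $c$ and at which the integrand blows up like $(R-x)^{-1/2}$, so differentiating in $c$ requires the regularization carried out in Appendices~\ref{app71} and \ref{app72} and produces an extra boundary term together with the kernel $H\cdot F(x,R)+G(x,R)$ of (\ref{VformulaII0}) and (\ref{criterion2M}). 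The one-sign property of $\partial_c f'$ (or $\partial_c\bar{f}'$) only serves to propagate a comparison away from the reference radius; that is exactly the content of Propositions~\ref{prop15} and \ref{prop17}, and it does not by itself yield monotonicity of the boundary value.

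The sign of $H\cdot F+G$ is where the paper's real work lies, and it is not automatic: for $\Sigma^-_{H,c_H<c<0}$ with $H<0$ it needs the change-of-variables estimate of Section~\ref{CMCFinterior}, and for $\Sigma^+_{H,-8M^3H\geq c>c_H}$ in region {\tt I$\!$I} it is established only for $H\geq 0$ and then extended to $H\geq -C$ by continuity, which is why Theorem~\ref{localCMCfoliation34} carries the restriction $H\geq -C$ (resp.\ $H\leq C$ for $\tilde{\Sigma}^-_{H,C_H>c>-8M^3H}$). Consequently, even granting your exterior sign estimate, the argument as written does not deliver the conjecture for every $H$: interior disjointness of the $\Sigma^+$ family for very negative $H$ (and of $\tilde{\Sigma}^-$ for very positive $H$) is itself an open sign estimate of the same kind, not a triviality that follows from the ordering of $T$-intercepts. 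Your covering and convergence-to-cylinder steps do match the paper's (limits of $f(0;H,c)$ and of $V(2M;c)$ as $c$ tends to the endpoints), and those are fine.
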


\subsection{Criteria for a global CMC foliation} \label{criteria}
From the construction in section~\ref{conjecture},
each \TSC hypersurface in $\{\Sigma_H\}$ has different $T$-intercept.
The continuity property implies that $\{\Sigma_H\}$ forms a local foliation near the $T$-axis.
That is, for any two \TSC hypersurfaces in $\{\Sigma_H\}$,
there exists an open set $O$ in the Kruskal extension such that they are disjoint in $O$ near the $T$-axis.
Conjecture \ref{conj} claims that $\{\Sigma_H\}$ is a global \TSC foliation. In other words,
the open set can be taken as the Kruskal extension and $\{\Sigma_H\}$ covers the whole Kruskal extension.

To answer the conjecture, we will first derive criteria that hypersurfaces in $\{\Sigma_H\}$ are disjoint.
The $T$-axisymmetry implies that it suffices to consider the hypersurfaces in $U+V> 0$ region.
We restrict to this case from now on. First, consider the family of \TSC hypersurfaces $\Sigma_{H,c_H<c<0}^-$.
The $V$-coordinate of $\Sigma_{H,c}^-$ is given in (\ref{case11}):
\begin{align*}
V(r;c)
=\mathrm{e}^{\frac1{4M}\left(t+r+2M\ln|r-2M|\right)}
=\mathrm{e}^{\frac1{4M}\left(\int^{r}_{r^-_{H,c}}f_-'(x;c)\,\mathrm{d}x+r+2M\ln|r-2M|\right)},
\end{align*}
where
\begin{align}
\int^{r}_{r^-_{H,c}}f_-'(x;c)\,\mathrm{d}x
=\int_{r^-_{H,c}}^r\frac{x}{x-2M}\frac{-Hx^3-c}{\sqrt{(Hx^3+c)^2+x^3(x-2M)}}\,\mathrm{d}x. \label{Extendedatzero}
\end{align}
We remark that when $c\in(c_H,0)$, the integral can be extended to a finite value at $r=0$.
So $V(0;c)$ is defined,
and we can use $V(0;c)$ to derive a criterion to detect whether $\Sigma^-_{H,c}, c\in(c_H,0)$ are disjoint.
\begin{prop} \label{prop15}
If $\frac{\mathrm{d}V(0;c)}{\mathrm{d}c}\leq 0$
for all \TSC hypersurfaces in $\Sigma^-_{H,c_H<c<0}$, then the hypersurfaces are disjoint.
If $\frac{\mathrm{d}V(0;c)}{\mathrm{d}c}>0$ at
$c=c_0$, then $\Sigma_{H,c_0}^-$ will intersect some other hypersurface in $\Sigma_{H,c_H<c<0}^-$.
\end{prop}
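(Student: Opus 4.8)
The plan is to reduce the statement to a single monotonicity fact about how two members of the family are ordered at a fixed radius, and then to read off both implications directly from the sign of $\frac{\mathrm{d}V(0;c)}{\mathrm{d}c}$. By the $T$-axisymmetry it suffices to argue in the region $U+V>0$. There, by Proposition~\ref{prop6} and the expression (\ref{case11}), each $\Sigma^-_{H,c}$ is a monotone graph running from its $T$-axis crossing at $r=r^-_{H,c}$ down to the singularity $r=0$. A short computation with the integrand of (\ref{Extendedatzero}) gives $\frac{\mathrm{d}}{\mathrm{d}r}\ln V(r;c)<0$, so along each curve $V$ increases as $r$ decreases from $r^-_{H,c}$ to $0$; in particular the singularity endpoint carries the value $V(0;c)$ and is the topmost point of the curve.

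Next I would fix $c_1<c_2$ and compare the two curves at a common radius. Since $k_H^-$ is decreasing, $r^-_{H,c_2}<r^-_{H,c_1}$, so the common domain is $(0,r^-_{H,c_2}]$. Writing $4M\ln V(r;c)=\int_{r^-_{H,c}}^r f_-'(x;c)\,\mathrm{d}x+r+2M\ln(2M-r)$ and splitting the lower limit, I obtain
\begin{align*}
4M\big(\ln V(r;c_1)-\ln V(r;c_2)\big)
=\int_{r^-_{H,c_1}}^{r^-_{H,c_2}}f_-'(x;c_1)\,\mathrm{d}x
+\int_{r^-_{H,c_2}}^{r}\big(f_-'(x;c_1)-f_-'(x;c_2)\big)\,\mathrm{d}x.
\end{align*}
The first term is a positive constant (it is the $t$-value of $\Sigma^-_{H,c_1}$ at radius $r^-_{H,c_2}$). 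For the second, the explicit form of $f_-'$ in (\ref{Extendedatzero}) shows that $f_-'(x;c)$ is strictly decreasing in $c$, so $f_-'(x;c_1)-f_-'(x;c_2)>0$. Hence the right-hand side is a strictly increasing function of $r$ on $(0,r^-_{H,c_2}]$, so $\ln V(r;c_1)-\ln V(r;c_2)$ is strictly monotone in $r$ and vanishes at most once. This is the crucial structural fact: any two distinct members of $\Sigma^-_{H,c_H<c<0}$ meet in at most one point.

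Both implications then follow. At $r=r^-_{H,c_2}$ the difference equals the positive constant above, so $\Sigma^-_{H,c_1}$ lies strictly above $\Sigma^-_{H,c_2}$ there; by monotonicity the sign of the difference on the rest of the common domain is governed entirely by its value at $r=0$, i.e.\ by the sign of $V(0;c_1)-V(0;c_2)$. Thus $\Sigma^-_{H,c_1}$ and $\Sigma^-_{H,c_2}$ are disjoint exactly when $V(0;c_1)\ge V(0;c_2)$ and intersect exactly when $V(0;c_1)<V(0;c_2)$. If $\frac{\mathrm{d}V(0;c)}{\mathrm{d}c}\le 0$ for every $c\in(c_H,0)$, then $c\mapsto V(0;c)$ is non-increasing, so $V(0;c_1)\ge V(0;c_2)$ for all $c_1<c_2$ and the whole family is disjoint. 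If instead $\frac{\mathrm{d}V(0;c)}{\mathrm{d}c}>0$ at some $c_0$, then $V(0;c_0)<V(0;c_0+\varepsilon)$ for small $\varepsilon>0$, so $\Sigma^-_{H,c_0}$ meets $\Sigma^-_{H,c_0+\varepsilon}$.

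The main obstacle is the monotonicity of $\ln V(r;c_1)-\ln V(r;c_2)$ in $r$ — equivalently the ``meet at most once'' property — after which the rest is bookkeeping on the endpoints. The delicate point within it is the behaviour at the upper limit $r=r^-_{H,c}$, where $l\to 1$ and $f_-'\to\infty$; I would sidestep this by comparing the two integrals through the split above (which removes the singular limit from the computation) rather than differentiating the endpoint value directly, and then verify the sign $\frac{\partial}{\partial c}f_-'(x;c)<0$ by differentiating (\ref{Extendedatzero}) explicitly.
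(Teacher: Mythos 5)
Your proposal is correct and takes essentially the same route as the paper's own proof: both arguments rest on the key monotonicity $\frac{\partial f'_-}{\partial c}<0$ to compare $V$-coordinates of two members at a fixed radius $r$ (intersections forcing equal $r$ via (\ref{trans})), conclude disjointness when $V(0;\cdot)$ is non-increasing, and invoke the intermediate value theorem between the $T$-axis crossing radius and $r=0$ to produce an intersection point when $\frac{\mathrm{d}V(0;c)}{\mathrm{d}c}>0$. Your phrasing of the comparison as strict monotonicity in $r$ of $\ln V(r;c_1)-\ln V(r;c_2)$ is just a repackaging of the paper's ratio identity $\frac{V(r;c_1)}{V(r;c_2)}=\frac{V(0;c_1)}{V(0;c_2)}\,\mathrm{e}^{\frac1{4M}\int_0^r(f'_-(x;c_1)-f'_-(x;c_2))\,\mathrm{d}x}$, anchored at the axis crossing instead of at $r=0$.
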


\begin{proof}
For $0>c_1>c_2>c_H$, and for $r\in(0,2M)$ where $V(r;c_1)$ and $V(r;c_2)$ are defined, we have
\begin{align*}
\frac{V(r;c_1)}{V(r;c_2)}
=\frac{V(0;c_1)}{V(0;c_2)}\ \mathrm{e}^{\frac1{4M}\int_0^r(f'_-(x;c_1)-f'_-(x;c_2))\mathrm{d}x}.
\end{align*}
The condition $\frac{\mathrm{d}V(0;c)}{\mathrm{d}c}\leq 0$ implies $\frac{V(0;c_1)}{V(0;c_2)}\leq 1$.
Furthermore, from Proposition \ref{prop6},
we have
\begin{align*}
\frac{\mathrm{d}f'_{-}}{\mathrm{d}c}
=\frac{1}{h(r)}\frac{-1}{(l^2-1)^{\frac32}}\frac{\mathrm{d}l}{\mathrm{d}c}<0,
\end{align*}
so $f_{-}'(r;c_1)-f_{-}'(r;c_2)<0$, which implies
$\mathrm{e}^{\frac1{4M}\int_0^r(f'_-(x;c_1)-f'_-(x;c_2))\mathrm{d}x}<1$.
Therefore, $V(r;c_1)<V(r;c_2)$ for all $r$ is defined,
and hence $\Sigma_{H,c_H<c<0}^-$ are disjoint because an intersection point must have the same $r$ by (\ref{trans}).

If $\frac{\mathrm{d}V(0;c_0)}{\mathrm{d}c}>0$,
there exists $c_1>c_0$ and $\varepsilon>0$ such that $V(0;c_1)-V(0;c_0)=\varepsilon>0$,
which implies $\frac{V(0;c_1)}{V(0;c_0)}>1$.
Consider the function $F(r)=\frac{V(r;c_1)}{V(r;c_0)}$ defined on $r\in[0,r_{H,c_1}^-]$,
then
\begin{align*}
F(r_{H,c_1}^-)=\mathrm{e}^{-\frac1{4M}\int_{r_{H,c_0}}^{r_{H,c_1}}f_-'(x,c_0)\,\mathrm{d}x}<1
\end{align*}
because of $r_{H,c_1}^-<r_{H,c_0}^-$ and $f_-'(x,c_0)<0$.
By the intermediate value theorem, there exists $r_0\in(0,r_{H,c_1}^-)$ such that $F(r_0)=1$,
so $(U(r_0;c_0),V(r_0;c_0))$ is an intersection point.
\end{proof}

Similar arguments give a criterion for $\tilde{\Sigma}_{H,0<c<C_H}^+$.

\begin{prop}
If $\frac{\mathrm{d}U(0;c)}{\mathrm{d}c}\geq 0$ for all \TSC hypersurfaces in
$\tilde{\Sigma}^+_{H,0<c<C_H}$, then the hypersurfaces are disjoint.
If $\frac{\mathrm{d}U(0;c)}{\mathrm{d}c}<0$ at $c=c_0$,
then $\tilde{\Sigma}_{H,c_0}^+$ will intersect some other hypersurface in $\tilde{\Sigma}_{H,0<c<C_H}^+$.
\end{prop}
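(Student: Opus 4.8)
The plan is to mirror the proof of Proposition~\ref{prop15}, replacing the region~{\tt I$\!$I} family $\Sigma^-_{H,c}$ by the region~{\tt I$\!$I'} family $\tilde{\Sigma}^+_{H,c}$, the coordinate $V$ by $U$, and reversing the sign of the monotonicity hypothesis. The conceptual reason the roles of $U$ and $V$ (and the direction of the inequality) switch is the time-reversal isometry $(U,V)\mapsto(V,U)$ of the Kruskal extension, which carries region~{\tt I$\!$I} to region~{\tt I$\!$I'} and a \TSC family of mean curvature $H$ to one of mean curvature $-H$; in fact $\tilde{\Sigma}^+_{H,c}$ corresponds under this map to $\Sigma^-_{-H,-c}$ with $U(0;c)=V(0;-c)$, so the statement could be deduced formally from Proposition~\ref{prop15}. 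I prefer to argue directly, as the paper indicates.

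First I would record the coordinate expression in $U+V>0$. There the surface lies in region~{\tt I$\!$I'} with $t=f_4\le 0$, so it is governed by the branch $f_4^*$ of {\rm(\ref{f4positive})}, for which $f_+'(x;c):=\frac{1}{-h(x)}\sqrt{\frac{l_4^2(x;c)}{l_4^2(x;c)-1}}>0$, and $f_4(\tilde{r}^+_{H,c})=0$ at the $T$-intercept $r=\tilde{r}^+_{H,c}$, the solution of $\tilde{k}_H^+(r)=c$ from {\rm(\ref{ktilde})}. From the table below {\rm(\ref{NullCoordinatesuv})},
\begin{align*}
U(r;c)=\mathrm{e}^{-\frac1{4M}\left(\int_{\tilde{r}^+_{H,c}}^r f_+'(x;c)\,\mathrm{d}x-r-2M\ln|r-2M|\right)}.
\end{align*}
As in the remark preceding Proposition~\ref{prop15}, the integral extends to a finite value at $r=0$ because $f_+'$ stays bounded there (indeed it behaves like $\frac{x}{2M}$), so $U(0;c):=\lim_{r\to0^+}U(r;c)$ is a well-defined positive number. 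The $r$-dependent factors then cancel in a ratio, giving
\begin{align*}
\frac{U(r;c_1)}{U(r;c_2)}=\frac{U(0;c_1)}{U(0;c_2)}\,\mathrm{e}^{-\frac1{4M}\int_0^r\left(f_+'(x;c_1)-f_+'(x;c_2)\right)\,\mathrm{d}x}.
\end{align*}

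Next I would use monotonicity in $c$. Since $l_4=\frac1{\sqrt{-h}}\left(Hr+\frac{c}{r^2}\right)$ from Proposition~\ref{propinteriorII} gives $\frac{\mathrm{d}l_4}{\mathrm{d}c}=\frac{1}{\sqrt{-h}\,r^2}>0$, one computes
\begin{align*}
\frac{\mathrm{d}f_+'}{\mathrm{d}c}=\frac1{-h}\,\frac{-1}{(l_4^2-1)^{3/2}}\,\frac{\mathrm{d}l_4}{\mathrm{d}c}<0.
\end{align*}
Hence for $0<c_1<c_2<C_H$ the exponential factor above is $<1$, while the hypothesis $\frac{\mathrm{d}U(0;c)}{\mathrm{d}c}\ge 0$ gives $\frac{U(0;c_1)}{U(0;c_2)}\le 1$; together these force $U(r;c_1)<U(r;c_2)$ wherever both are defined. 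Since an intersection must occur at a common $r$ and a common $U$, and by {\rm(\ref{trans})} these determine $V$ through $UV=(r-2M)\mathrm{e}^{r/2M}$, the strict inequality rules out intersections, so the family is disjoint. For the converse, if $\frac{\mathrm{d}U(0;c_0)}{\mathrm{d}c}<0$ I would pick $c_1>c_0$ nearby with $U(0;c_1)<U(0;c_0)$ and study $F(r)=\frac{U(r;c_1)}{U(r;c_0)}$ on $[0,\tilde{r}^+_{H,c_0}]$ (both defined there since $\tilde{k}_H^+$ increasing gives $\tilde{r}^+_{H,c_0}<\tilde{r}^+_{H,c_1}$). Then $F(0)<1$, whereas
\begin{align*}
F(\tilde{r}^+_{H,c_0})=\mathrm{e}^{-\frac1{4M}\int_{\tilde{r}^+_{H,c_1}}^{\tilde{r}^+_{H,c_0}}f_+'(x;c_1)\,\mathrm{d}x}>1
\end{align*}
because $f_+'>0$ and $\tilde{r}^+_{H,c_0}<\tilde{r}^+_{H,c_1}$, and the intermediate value theorem produces $r_0$ with $F(r_0)=1$, i.e.\ an intersection point.

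The main obstacle is bookkeeping rather than analysis: one must pin down, for region~{\tt I$\!$I'} in the half-plane $U+V>0$, the correct branch ($f_4^*$, hence $f'>0$) and the correct coordinate ($U$ rather than $V$), and then keep every sign straight so that the hypothesis $\frac{\mathrm{d}U(0;c)}{\mathrm{d}c}\ge 0$ (opposite to Proposition~\ref{prop15}) comes out as the disjointness condition. The one genuine analytic point is the convergence of $\int_0^{\tilde{r}^+_{H,c}}f_+'$ at $r=0$, which guarantees that $U(0;c)$ exists; this follows from the $O(r)$ decay of $f_+'$ there and is the analogue of the remark before Proposition~\ref{prop15}.
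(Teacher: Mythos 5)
Your proof is correct and takes essentially the same approach as the paper: the paper's own ``proof'' of this proposition is just the remark that ``similar arguments'' to Proposition~\ref{prop15} apply, and your mirrored argument---with $U$ in place of $V$, the branch $f_4^*$ of (\ref{f4positive}) in region {\tt I$\!$I'}, the reversed monotonicity hypothesis, and the intermediate value theorem for the converse---is precisely that intended adaptation, with all the sign and domain bookkeeping ($\tilde{r}^+_{H,c_0}<\tilde{r}^+_{H,c_1}$, $f_+'>0$, $\frac{\mathrm{d}f_+'}{\mathrm{d}c}<0$) handled correctly.
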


Next, we consider $\Sigma_{H,-8M^3H\geq c>c_H}^+$.
The $V$-coordinate of $\Sigma_{H,c}^+$ in $U+V> 0$ region is given in (\ref{Vcoord}):
\begin{align*}
V(r;c)
=\mathrm{e}^{\frac1{4M}(t+r+2M\ln|r-2M|)}
=\mathrm{e}^{\frac1{4M}\left(r_{H,c}^++2M\ln|r_{H,c}^+-2M|+\int_{r_{H,c}^+}^r\bar{f}'(x)\mathrm{d}x\right)}.
\end{align*}

\begin{prop}\label{prop17}
Given $r'>r_H$, if $\frac{\mathrm{d}V(r';c)}{\mathrm{d}c}\leq 0$ for all \TSC hypersurfaces in the family $\Sigma_{H,-8M^3H\geq c>c_H}^+$,
then the hypersurfaces are disjoint for all $r\in[r_H,r']$ defined.
If $\frac{\mathrm{d}V(r';c)}{\mathrm{d}c}>0$ at $c=c_0$,
then $\Sigma_{H,c_0}^+$ intersects some other hypersurface in $\Sigma_{H,-8M^3H\geq c>c_H}^+$.
\end{prop}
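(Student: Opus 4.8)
The plan is to run the argument of Proposition~\ref{prop15} with the reference level $r=0$ replaced by the prescribed level $r=r'$ and the integrand $f_-'$ replaced by $\bar f'$ from (\ref{barf}). First I would record the multiplicative factorization coming from (\ref{Vcoord}): since the boundary term $r_{H,c}^++2M\ln|r_{H,c}^+-2M|$ and the integral there share the lower limit $r_{H,c}^+$, for any $r$ and $r'$ in the common domain of $\Sigma_{H,c}^+$ one has
\begin{align*}
V(r;c)=V(r';c)\,\exp\Big(\tfrac1{4M}\int_{r'}^r\bar f'(x;c)\,\mathrm{d}x\Big),
\end{align*}
so that for two parameters $c_1>c_2$,
\begin{align*}
\frac{V(r;c_1)}{V(r;c_2)}=\frac{V(r';c_1)}{V(r';c_2)}\,\exp\Big(\tfrac1{4M}\int_{r'}^r\big(\bar f'(x;c_1)-\bar f'(x;c_2)\big)\,\mathrm{d}x\Big).
\end{align*}

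The crucial new ingredient is the monotonicity of $\bar f'$ in $c$. Writing $A=Hr^3+c$ and $S=\sqrt{A^2+r^3(r-2M)}$, the denominator in (\ref{barf}) collapses to $S^2-AS=S(S-A)$, hence $\bar f'(r;c)=r^4/\big(S(S-A)\big)$. Differentiating in $c$ with $\partial A/\partial c=1$ and $\partial S/\partial c=A/S$, the numerator becomes a perfect square,
\begin{align*}
\frac{\partial}{\partial c}\big(S(S-A)\big)=2A-\frac{A^2}{S}-S=-\frac{(A-S)^2}{S}<0,
\end{align*}
and on the admissible range one checks $S(S-A)>0$ (in region {\tt I$\!$I}, $A<0$; in region {\tt I}, $S>|A|\ge A$), so $\partial\bar f'/\partial c>0$. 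In particular $\bar f'(x;c_1)>\bar f'(x;c_2)$ whenever $c_1>c_2$.

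For the disjointness direction I would take $-8M^3H\ge c_1>c_2>c_H$ and restrict attention to $r\in[r_H,r']$, so that $r\le r'$. The hypothesis $\mathrm{d}V(r';c)/\mathrm{d}c\le0$ yields $V(r';c_1)/V(r';c_2)\le1$, while the monotonicity above together with $r\le r'$ makes $\int_{r'}^r\big(\bar f'(x;c_1)-\bar f'(x;c_2)\big)\,\mathrm{d}x\le0$, with strict inequality for $r<r'$. Hence $V(r;c_1)<V(r;c_2)$ for $r<r'$ in the common domain; since by (\ref{trans}) any intersection of two spherically symmetric hypersurfaces forces equal $r$ and then equal $V$, the family is disjoint on $[r_H,r']$.

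For the converse, assuming $\mathrm{d}V(r';c_0)/\mathrm{d}c>0$, I would pick $c_1>c_0$ close to $c_0$ with $V(r';c_1)>V(r';c_0)$ and examine $F(r)=V(r;c_1)/V(r;c_0)$ on $[r_{H,c_1}^+,r']$, where $F(r')>1$. At the turning point $r_{H,c_1}^+$ the integral in the $c_1$-surface vanishes, and using $\bar f'=f'+\tfrac1h$ the $c$-independent terms $r+2M\ln|r-2M|$ cancel, leaving
\begin{align*}
F(r_{H,c_1}^+)=\exp\Big(-\tfrac1{4M}\int_{r_{H,c_0}^+}^{r_{H,c_1}^+}f'(x;c_0)\,\mathrm{d}x\Big).
\end{align*}
Since $k_H^+$ is increasing, $r_{H,c_0}^+<r_{H,c_1}^+$, and $f'>0$ on the $U+V\ge0$ branch in region {\tt I$\!$I}, this integral is positive, so $F(r_{H,c_1}^+)<1$ and the intermediate value theorem produces an intersection. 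The step I expect to be most delicate is the domain bookkeeping: the turning points $r_{H,c}^+$ move with $c$, so $c_1$ must be taken near $c_0$ to guarantee $r_{H,c_1}^+<r'$ (so that $F$ is defined on $[r_{H,c_1}^+,r']$), and the sign $\partial\bar f'/\partial c>0$ must be verified uniformly across region {\tt I$\!$I} and, when $r'>2M$, region {\tt I} as well --- which is precisely where the identity $S(S-A)>0$ is used.
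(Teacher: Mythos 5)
Your proposal is correct and takes essentially the same route as the paper's proof: the same multiplicative factorization of $V(r;c)$ through the reference level $r'$, the same key monotonicity $\frac{\partial\bar f'}{\partial c}>0$ (which the paper merely asserts with ``one can check'' and you verify via the perfect-square computation $\partial_c\bigl(S(S-A)\bigr)=-(S-A)^2/S<0$), and the same intermediate value theorem argument anchored at a turning point for the converse. The only cosmetic differences are that you take $c_1>c_0$ where the paper takes $c_1<c_0$, and you evaluate the ratio at the turning point exactly using $\bar f'=f'+\frac1h$ rather than through the paper's chain of inequalities; both give the needed value $<1$, so the arguments are interchangeable.
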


\begin{proof}
For $-8M^3H\geq c_1>c_2>c_H$ and $r<r'$, we have
\begin{align*}
\frac{V(r;c_1)}{V(r;c_2)}
=\frac{V(r';c_1)}{V(r';c_2)}\ \mathrm{e}^{\frac1{4M}\int_{r}^{r'}(\bar{f}'(x;c_2)-\bar{f}'(x;c_1))\mathrm{d}x}.
\end{align*}
The condition $\frac{\mathrm{d}V(r';c)}{\mathrm{d}c}\leq 0$ implies $\frac{V(r';c_1)}{V(r';c_2)}\leq 1$.
One can check $\frac{\mathrm{d}\bar{f}'}{\mathrm{d}c}>0$
and it gives $\bar{f}'(r;c_1)>\bar{f}'(r;c_2)$, so
$\mathrm{e}^{\frac1{4M}\int_{r}^{r'}(\bar{f}'(x;c_2)-\bar{f}'(x;c_1))\mathrm{d}x}<1$.
Therefore, $\frac{V(r;c_1)}{V(r;c_2)}<1$ for all $r$ defined.
Since $UV=(r-2M)\mathrm{e}^{\frac{r}{2M}}$, it shows that  $\Sigma_{H,-8M^3H\geq c>c_H}^+$ are disjoint.

If $\frac{\mathrm{d}V(r';c_0)}{\mathrm{d}c}>0$, there exists $c_1$ with $c_1<c_0$
and $\varepsilon>0$ such that $V(r';c_0)-V(r';c_1)=\varepsilon>0$,
which implies $\frac{V(r';c_0)}{V(r';c_1)}>1$.
Let $F(r)=\frac{V(r;c_0)}{V(r;c_1)}$, which is defined on $r\in[r_{H,c_0}^+,\infty)$.
Since
\begin{align*}
F(r_{H,c_0}^+)=\frac{V(r_{H,c_0}^+;c_0)}{V(r_{H,c_0}^+;c_1)}<\frac{V(r_{H,c_0}^+;c_0)}{V(r_{H,c_1}^+;c_1)}<1,
\end{align*}
by the intermediate value theorem, there is $r_0\in(r_{H,c_0}^+,r')$ such that $F(r_0)=1$,
and $(U(r_0;c_0),V(r_0;c_0))$ is the intersection point.
\end{proof}

We also have a criterion for $\tilde{\Sigma}_{H,C_H>c>-8M^3H}^-$.
\begin{prop}
Given $r'>R_H$, if $\frac{\mathrm{d}U(r';c)}{\mathrm{d}c}\geq 0$ for all \TSC hypersurfaces in the family
$\tilde{\Sigma}_{H,C_H>c>-8M^3H}^-$, then the hypersurfaces are disjoint for $r\in[R_H,r']$ defined.
If $\frac{\mathrm{d}U(r';c)}{\mathrm{d}c}<0$ at $c=c_0$,
then $\tilde{\Sigma}_{H,c_0}^-$ intersects some other hypersurface in $\tilde{\Sigma}_{H,C_H>c>-8M^3H}^-$.
\end{prop}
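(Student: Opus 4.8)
The plan is to run the argument of Proposition~\ref{prop17} with the roles of $U$ and $V$ interchanged. The family $\tilde{\Sigma}^-_{H,c}$ passes through regions {\tt I}, {\tt I$\!$I'}, and {\tt I'} (discussion (b) before Theorem~\ref{thmall}) and, unlike $\Sigma^+_{H,c}$, enters region {\tt I} through the past horizon $V=0$ (where $t\to-\infty$, since $f_4^{**}\to-\infty$ as $r\to 2M^-$) rather than the future horizon $U=0$; consequently it is $U$, not $V$, that grows and detects separation, which is exactly why the criterion is stated in terms of $U$. So the first step is to write the $U$-coordinate of $\tilde{\Sigma}^-_{H,c}$ in the $U+V>0$ region explicitly. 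Starting from the solution $f_4$ of Proposition~\ref{propinteriorII} (formulas (\ref{f4positive}) and (\ref{f4negative})) and the coordinate table below (\ref{NullCoordinatesuv}), and isolating the singular part $\frac1{h(r)}$ of $f'$ as was done in (\ref{barf}) for $\Sigma^+$, I expect an expression of the form
\begin{align*}
U(r;c)=\exp\left(\frac1{4M}\left(A(\tilde{r}_{H,c}^-)+\int_{\tilde{r}_{H,c}^-}^r\tilde{g}'(x;c)\,\mathrm{d}x\right)\right),
\end{align*}
where $\tilde{r}_{H,c}^-$ solves $\tilde{k}_H^-(r)=c$, the term $A$ collects the boundary contribution at the turning radius, and $\tilde{g}'$ is an explicit integrand playing the role of $\bar{f}'$ in (\ref{barf}).

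Given this, for $C_H>c_1>c_2>-8M^3H$ and $r<r'$ the boundary terms cancel relative to $U(r';c)$, and I would factor
\begin{align*}
\frac{U(r;c_1)}{U(r;c_2)}=\frac{U(r';c_1)}{U(r';c_2)}\,\exp\left(\frac1{4M}\int_r^{r'}\bigl(\tilde{g}'(x;c_2)-\tilde{g}'(x;c_1)\bigr)\,\mathrm{d}x\right).
\end{align*}
The hypothesis $\frac{\mathrm{d}U(r';c)}{\mathrm{d}c}\ge 0$ gives $\frac{U(r';c_1)}{U(r';c_2)}\ge 1$. The key computational step is to verify $\frac{\mathrm{d}\tilde{g}'}{\mathrm{d}c}<0$, so that $\tilde{g}'(x;c_2)>\tilde{g}'(x;c_1)$ and the exponential factor strictly exceeds $1$; together these yield $U(r;c_1)>U(r;c_2)$ for every admissible $r\in[R_H,r']$. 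Since $UV=(r-2M)\mathrm{e}^{r/2M}$ depends only on $r$ by (\ref{trans}), any intersection of two \TSC hypersurfaces would force equal $r$ and hence equal $U$, so the strict ordering in $c$ rules out intersections and gives disjointness on $[R_H,r']$. For the converse, if $\frac{\mathrm{d}U(r';c_0)}{\mathrm{d}c}<0$ I would pick $c_1<c_0$ with $U(r';c_0)<U(r';c_1)$, set $F(r)=U(r;c_0)/U(r;c_1)$ on $[\tilde{r}_{H,c_0}^-,r']$, note $F(r')<1$, establish $F(\tilde{r}_{H,c_0}^-)>1$ using the monotonicity of $\tilde{k}_H^-$ (now \emph{decreasing}, so $c_1<c_0$ forces $\tilde{r}_{H,c_1}^->\tilde{r}_{H,c_0}^-$) together with the behavior of $U(r;c)$ in $r$, and then apply the intermediate value theorem to produce an intersection point.

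The main obstacle is the explicit verification of the sign $\frac{\mathrm{d}\tilde{g}'}{\mathrm{d}c}<0$. Unlike in (\ref{barf}), the function $l_4$ in Proposition~\ref{propinteriorII} carries $+Hr+\frac{c}{r^2}$ rather than $-Hr-\frac{c}{r^2}$, so the decomposition of $f_4'$ and the resulting integrand $\tilde{g}'$ differ in sign from $\bar{f}'$, and one must track these signs carefully to land on the monotonicity that reinforces, rather than opposes, the boundary ordering $\frac{U(r';c_1)}{U(r';c_2)}\ge 1$. A secondary subtlety, relevant mainly to the endpoint estimate $F(\tilde{r}_{H,c_0}^-)>1$ in the converse, is the bookkeeping of the boundary term $A(\tilde{r}_{H,c}^-)$ and the fact that the decreasing branch $\tilde{k}_H^-$ reverses several inequalities relative to Proposition~\ref{prop17}. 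Once the sign of $\frac{\mathrm{d}\tilde{g}'}{\mathrm{d}c}$ is settled, the remaining estimates are routine transcriptions of the $\Sigma^+_{H,c}$ argument.
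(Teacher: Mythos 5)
Your strategy is the right one, and it is what the paper intends: the paper gives no separate proof of this proposition (it is stated immediately after Proposition~\ref{prop17} as its mirror image), and your plan is to transcribe that proof with $U$ in place of $V$. In the disjointness direction your outline is complete except for the sign $\frac{\mathrm{d}\tilde{g}'}{\mathrm{d}c}<0$, which you flag as the main obstacle but leave unverified; it does hold, and the cleanest way to see it is that your $\tilde{g}'$ is exactly $\bar{f}'$ of (\ref{barf}) with $(H,c)$ replaced by $(-H,-c)$. Indeed the isometry $t\mapsto -t$ swaps $u$ and $v$, hence $U$ and $V$, and since $\tilde{k}_{H}(r)=-k_{-H}(r)$ it maps $\tilde{\Sigma}^-_{H,c}$ onto $\Sigma^+_{-H,-c}$; thus $U(r;H,c)=V(r;-H,-c)$, and the paper's inequality $\frac{\mathrm{d}\bar{f}'}{\mathrm{d}c}>0$ becomes $\frac{\mathrm{d}\tilde{g}'}{\mathrm{d}c}<0$ after the substitution $c\mapsto -c$. (Directly: with $E=Hx^3+c$ and $D=E^2+x^3(x-2M)$ one has $\tilde{g}'=\frac{x^4}{\sqrt{D}(\sqrt{D}+E)}$ and $\frac{\partial}{\partial c}\bigl[\sqrt{D}(\sqrt{D}+E)\bigr]=\frac{(\sqrt{D}+E)^2}{\sqrt{D}}>0$.) This symmetry in fact reduces the whole proposition to Proposition~\ref{prop17} applied to $-H$, which would be an even shorter route than redoing the estimates.

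The one step that fails as written is the endpoint evaluation in the converse. You choose $c_1<c_0$ and correctly record that then $\tilde{r}^-_{H,c_1}>\tilde{r}^-_{H,c_0}$; but this means $U(\cdot\,;c_1)$ exists only for $r\geq\tilde{r}^-_{H,c_1}$, so $F(r)=U(r;c_0)/U(r;c_1)$ is defined on $[\tilde{r}^-_{H,c_1},r']$, not on $[\tilde{r}^-_{H,c_0},r']$, and the quantity $F(\tilde{r}^-_{H,c_0})$ you propose to bound does not exist. The repair is immediate: evaluate at $\tilde{r}^-_{H,c_1}$ instead. Writing $\rho(r)=r+2M\ln|r-2M|$, which is decreasing on $(0,2M)$, one has $U(\tilde{r}^-_{H,c_1};c_1)=\mathrm{e}^{\frac{1}{4M}\rho(\tilde{r}^-_{H,c_1})}$, while $U(\tilde{r}^-_{H,c_1};c_0)\geq \mathrm{e}^{\frac{1}{4M}\rho(\tilde{r}^-_{H,c_0})}$ because $\tilde{g}'>0$; since $\tilde{r}^-_{H,c_0}<\tilde{r}^-_{H,c_1}$ this gives $F(\tilde{r}^-_{H,c_1})>1$, and together with $F(r')<1$ the intermediate value theorem yields a common value of $U$ at a common $r$, hence an intersection point by $UV=(r-2M)\mathrm{e}^{\frac{r}{2M}}$. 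Alternatively, choose $c_1>c_0$ — the exact mirror of the paper's choice once the monotonicity of the branch is reversed — and then your interval $[\tilde{r}^-_{H,c_0},r']$ and the two-step comparison in the proof of Proposition~\ref{prop17} transcribe verbatim.
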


For \TSC hypersurfaces in families $\Sigma_{H,c_H<c<0}^-$ and $\Sigma_{H,-8M^3H\geq c>c_H}^+$,
$V$-coordinates are positive in $U+V> 0$ region,
so the criteria for disjoint hypersurfaces can be replaced by $\frac{\mathrm{d}\ln V}{\mathrm{d}c}\leq 0$.
Similarly,
for \TSC hypersurfaces in $\tilde{\Sigma}_{H,0<c<C_H}^+$ and $\tilde{\Sigma}_{H,C_H>c>-8M^3H}^-$,
$U$-coordinates are positive in $U+V> 0$ region,
so the criteria for disjoint hypersurfaces can be replaced by $\frac{\mathrm{d}\ln U}{\mathrm{d}c}\geq 0$.

Finally, we remark that two families $\Sigma_{H,-8M^3H\geq c>c_H}^+$ and $\Sigma_{H,c_H<c<0}^-$
must be disjoint because the cylindrical hypersurface $r=r_H$ is a barrier between them.
Similarly, $r=R_H$ is a barrier between $\tilde{\Sigma}_{H,0<c<C_H}^+$ and $\tilde{\Sigma}_{H,C_H>c>-8M^3H}^-$.

\subsection{CMC foliation for $\Sigma^-_{H,c_H\leq c<0}$ in region {\tt I$\!$I} and
$\tilde{\Sigma}^+_{H,0<c\leq C_H}$ in region {\tt I$\!$I'}} \label{CMCFinterior}
In the following,
we use $R=R(H,c)$ to denote $r_{H,c}^-$ ($r_{H,c}^+, \tilde{r}_{H,c}^+$, or $\tilde{r}_{H,c}^-$)
in convenience when it does not cause confusion.
\begin{prop}
For all $H\in\mathbb{R}$, each \TSC hypersurface in $\Sigma_{H,c_H<c<0}^-$ satisfies $\frac{\mathrm{d}\ln V(0;c)}{\mathrm{d}c}\leq 0$
so that they are disjoint.
\end{prop}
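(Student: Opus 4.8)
The plan is to verify the hypothesis of Proposition~\ref{prop15}: since $V(0;c)>0$, the condition $\frac{\mathrm{d}V(0;c)}{\mathrm{d}c}\le 0$ is equivalent to the stated $\frac{\mathrm{d}\ln V(0;c)}{\mathrm{d}c}\le 0$, and by Proposition~\ref{prop15} this already forces the family $\Sigma^-_{H,c_H<c<0}$ to be disjoint. From (\ref{case11}) and (\ref{Extendedatzero}),
\begin{align*}
\ln V(0;c)=\frac1{4M}\left(\int_{R}^{0}f_-'(x;c)\,\mathrm{d}x+2M\ln 2M\right),
\end{align*}
where, following the convention of the next subsection, $R=R(H,c)$ denotes $r^-_{H,c}$, the root of $k_H^-(R)=c$ at which $f_-'$ becomes infinite. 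Thus everything reduces to controlling the sign of $\frac{\mathrm{d}}{\mathrm{d}c}\int_{R}^{0}f_-'(x;c)\,\mathrm{d}x$.

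The key is a differential identity. Writing $W(x;c)=(Hx^3+c)^2+x^3(x-2M)$ for the quantity under the root in (\ref{Extendedatzero}), a direct computation gives
\begin{align*}
\frac{\partial f_-'}{\partial c}=-\frac{x^4}{W^{3/2}}<0,
\qquad
f_-'(x;c)=-\frac{x}{x-2M}\,\frac{\partial}{\partial c}\sqrt{W}.
\end{align*}
The second identity is what makes the turning point tractable: setting $\Phi(c):=\int_0^{R}\frac{x}{x-2M}\sqrt{W}\,\mathrm{d}x$, and using that the boundary term at $x=R$ carries the factor $\sqrt{W(R;c)}=0$, one finds $\int_{R}^{0}f_-'(x;c)\,\mathrm{d}x=\Phi'(c)$. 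Hence $\ln V(0;c)=\frac1{4M}(\Phi'(c)+2M\ln 2M)$, and the proposition becomes the concavity statement $\Phi''(c)\le 0$ for all $H\in\mathbb{R}$ and $c\in(c_H,0)$. The advantage is that $\Phi$ itself is a genuinely regular integral, with integrand continuous on $[0,R]$ and vanishing at $R$, so $\Phi$ is manifestly smooth in $c$.

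The technical heart is the computation of $\Phi''$. Differentiating $\Phi'(c)=\int_0^{R}\frac{x}{x-2M}\frac{Hx^3+c}{\sqrt W}\,\mathrm{d}x$ is delicate because the integrand now has an integrable $(R-x)^{-1/2}$ singularity at the turning point, so one cannot simply differentiate under the integral sign: the naive interior derivative produces a non-integrable $(R-x)^{-3/2}$ term, which must be balanced against the motion of the endpoint, governed by $R'(c)=-W_c(R)/W_x(R)<0$. I would regularize by the substitution $w=W(x;c)$ on a neighborhood of $x=R$ together with an integration by parts, arranging the cancellation so that $\Phi''$ is expressed as a convergent integral plus finite boundary data; some care is also required at the other end $x=0$, where $W$ has a degenerate critical point, $W_x(0;c)=0$ and $W(0;c)=c^2$.

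The main obstacle is the final sign. Pointwise in $x$ the map $c\mapsto\sqrt W$ is \emph{concave}, since $\partial_c^2\sqrt W=x^3(x-2M)\,W^{-3/2}<0$ on $(0,2M)$, while $\frac{x}{x-2M}<0$; so the integrand of $\Phi$ is convex in $c$ at fixed $x$, and the desired concavity $\Phi''\le 0$ cannot come from the integrand but must be produced entirely by the shrinking domain $R=R(c)$ with $R'(c)<0$. Establishing that this endpoint contribution dominates, uniformly for every $H\in\mathbb{R}$ and every $c\in(c_H,0)$, is where the explicit cubic structure of $W$ and the defining relation $k_H^-(R)=c$ (equivalently $(HR^3+c)^2=R^3(2M-R)$) have to be exploited. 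I expect this uniform estimate, rather than the reduction to $\Phi$, to be the crux of the argument.
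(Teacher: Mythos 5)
Your reduction is correct and even attractive: the identities $f_-'(x;c)=-\frac{x}{x-2M}\partial_c\sqrt{W}$ and $\partial_c f_-'=-x^4W^{-3/2}$ check out, the boundary term at $x=R$ does vanish because $W(R;c)=0$, and hence $\ln V(0;c)=\frac1{4M}\bigl(\Phi'(c)+2M\ln 2M\bigr)$, so the proposition is indeed equivalent to the concavity statement $\Phi''(c)\leq 0$. But the proposal stops exactly where the content of the proposition begins. You never prove $\Phi''\leq 0$: the regularization near $x=R$ is only described ("I would regularize\dots"), and the decisive sign estimate is explicitly deferred ("I expect this uniform estimate\dots to be the crux"). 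That estimate \emph{is} the proposition. Your own convexity observation makes the gap worse, not better: since the integrand of $\Phi$ is pointwise convex in $c$, no term-by-term or soft argument can give $\Phi''\leq 0$; everything hinges on quantifying the endpoint contribution coming from $R'(c)<0$, and the proposal contains no inequality that does so, for any range of $H$.

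For comparison, the paper's proof consists precisely of the two steps you postponed. First (Appendix~\ref{app71}) it carries out the differentiation you sketch: an $\varepsilon$-truncation $\phi_\varepsilon(R)$, the fundamental theorem of calculus, and the observation that the $(R-x)^{-3/2}$ terms cancel in the combination of $\frac{\mathrm{d}}{\mathrm{d}x}$ and $\frac{\mathrm{d}}{\mathrm{d}R}$ derivatives, yielding the closed formula (\ref{VformulaII0}): $\frac{\mathrm{d}\ln V(0;c(R))}{\mathrm{d}c}=-\frac1{4MJ(R)\sqrt{-h(R)}}\bigl(\int_0^R\frac{H\cdot F(x,R)+G(x,R)}{(R-x)^{1/2}(P(x,R))^{3/2}}\,\mathrm{d}x-1\bigr)$ with $J(R)<0$. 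Second, the actual work is the pointwise sign $H\cdot F(x,R)+G(x,R)\leq 0$ on $[0,R]$: immediate when $H\geq 0$ since $F$ and $G$ are separately negative there, but for $H<0$ it requires the substitution $a=\frac{2M}{r_H}$, $b=\frac{2M}{R}$, $z=\frac{x}{R}$, the relation (\ref{HRformula}) linking $HR$ to $a,b$, and elementary but nontrivial bounds (e.g.\ $z(z-1)(z+2)\leq 0$ on $[0,1]$, $b>a>\frac43$) to absorb $H\cdot F$ into $G$. Nothing in your proposal plays the role of this case analysis, and in particular the case $H<0$ --- the only delicate one --- is untouched. As it stands, the proposal is a correct and equivalent reformulation together with an unproven claim, so it does not constitute a proof.
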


\begin{proof}
From the calculation in the Appendix \ref{app71}, we have the formula (\ref{VformulaII0}):
\begin{align*}
\frac{\mathrm{d}\ln V(0;c(R))}{\mathrm{d}c}
=-\frac1{4MJ(R)\sqrt{-h(R)}}\left(\int_{0}^{R}\frac{H\cdot F(x,R)+G(x,R)}{(R-x)^{\frac12}(P(x,R))^{\frac32}}\,\mathrm{d}x-1\right),
\end{align*}
where $F(x,R)$ and $G(x,R)$ are as (\ref{Ffunction}) and (\ref{Gfunction}):
\begin{align*}
F(x,R)&=x^2(-3x^2(x+R-2M)+(2x-3M)(x^2+Rx+R^2)) \\
&=x^2((3M-x)(x^2-R^2)+xR(R-3M-x))\quad\mbox{and} \\
G(x,R)&=x^2\sqrt{-h(R)}(x(R-3M)+R(x-3M)).
\end{align*}
Because both $F(x,R)$ and $G(x,R)$ are negative functions on $x\in[0,R]$, and $J(R)=-3HR^{\frac32}(2M-R)+(2R-3M)<0$,
we get $\frac{\mathrm{d}\ln V(0;c)}{\mathrm{d}c}\leq 0$ if $H\geq 0$.

If $H<0$, we can still show that $H\cdot F(x,R)+G(x,R)\leq 0$ for all $x\in[0,R]$,
which also implies $\frac{\mathrm{d}\ln V(0;c)}{\mathrm{d}c}\leq 0$.
The proof goes as below.
Define $a=\frac{2M}{r_H}, b=\frac{2M}{R}$, and $z=\frac{x}{R}$.
We have relations $h(R)=1-\frac{2M}R=1-b$, and from (\ref{HRformula})
\begin{align*}
HR=\frac{a}b\frac1{\sqrt{a-1}}\left(\frac{4-3a}6\right).
\end{align*}
Hence $R<r_H$, $H<0$, and $x\in[0,R]$ imply
$b>a>\frac43$ and $0\leq z\leq 1$.
Furthermore, we have
\begin{align*}
&\ H\cdot F(x,R) \\
=&\ \frac{a}b\frac{R^4z^2}{\sqrt{a-1}}\left(\frac{4-3a}6\right)\left(-3z^2(z+1-b)+\left(2z-\frac32b\right)(z^2+z+1)\right) \\
=&\ \frac{a}b\frac{R^4z^2}{\sqrt{a-1}}\left(\frac{3a-4}6\right)\left(z(z-1)(z+2)+\frac32b\left(-\left(z-\frac12\right)^2+\frac54\right)\right).
\end{align*}
The term $z(z-1)(z+2)$ is negative on $z\in[0,1]$, so
\begin{align*}
H\cdot F(x,R)\leq\frac{a}b\frac{R^4z^2}{\sqrt{a-1}}\left(\frac{3a-4}6\right)(3b)=\frac{R^4z^2a}{\sqrt{a-1}}\left(\frac{3a-4}2\right).
\end{align*}
For $G(x,R)$, because $b>\frac43$, we have
\begin{align*}
G(x,R)&=R^4z^2\sqrt{b-1}\left(z\left(1-\frac32b\right)+\left(z-\frac32b\right)\right) \\
&=R^4z^2\sqrt{b-1}\left(z\left(2-\frac32b\right)-\frac32b\right) \\
&\leq-\frac{3R^4z^2b\sqrt{b-1}}{2}
\leq-\frac{3R^4z^2a\sqrt{a-1}}2.
\end{align*}
Hence
\begin{align*}
H\cdot F(x,R)+G(x,R)
\leq&\ \frac{R^4z^2}{\sqrt{a-1}}\left(\frac{a(3a-4)}{2}-\frac{3a(a-1)}{2}\right) \\
=&-\frac{a}2\frac{R^4z^2}{\sqrt{a-1}}<0.
\end{align*}
\end{proof}

To show $\Sigma_{H,c_H<c<0}^-$ forms a foliation between $r=0$ and $r=r_H$ in the Kruskal extension {\tt I$\!$I},
we still need to prove that every point can be covered by some \TSC in $\Sigma_{H,c_H<c<0}^-$.
As explained after (\ref{Extendedatzero}), $f(0;H,c)=\lim\limits_{r\to 0}f(r;H,c)$ is defined.
We will estimate $f(0;H,c)$ when $c\to 0$ or $c\to c_H$.
\begin{prop}
For \TSC hypersurfaces in $\Sigma_{H,c_H<c<0}^-$, we have
\begin{align*}
\lim_{c\to c_{H}}f(0;H,c)=\infty,
\quad\mbox{and}\quad\lim_{c\to 0}f(0;H,c)=0.
\end{align*}
\end{prop}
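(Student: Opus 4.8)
\emph{Proof proposal.} The plan is to write $f(0;H,c)$ as an explicit, manifestly positive integral and then study its two endpoint limits separately. Using the normalization $f(r_{H,c}^-;H,c)=0$ together with (\ref{Extendedatzero}),
\[
f(0;H,c)=-\int_0^{r_{H,c}^-}f_-'(x;c)\,\mathrm{d}x
=\int_0^{r_{H,c}^-}\frac{x}{2M-x}\,\frac{-Hx^3-c}{\sqrt{(Hx^3+c)^2-x^3(2M-x)}}\,\mathrm{d}x .
\]
The key algebraic step is the factorization
\[
(Hx^3+c)^2-x^3(2M-x)=(k_H(x)-c)(\tilde{k}_H(x)-c),
\]
which follows from (\ref{k2function}) and (\ref{ktilde}) by setting $A=-Hx^3-c$, $B=x^{\frac32}(2M-x)^{\frac12}$ and noting $A^2-B^2=(A-B)(A+B)$ with $A-B=k_H(x)-c$ and $A+B=\tilde{k}_H(x)-c$. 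Since $-Hx^3-c=(k_H(x)-c)+B>0$ and $\tilde{k}_H(x)-c=(k_H(x)-c)+2B>0$ on $(0,r_{H,c}^-)$, the integrand is positive, with an integrable singularity of order $(k_H(x)-c)^{-\frac12}$ as $x\to r_{H,c}^-$ (where $k_H=c$).

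For the limit $c\to 0$, I would rescale. As $c\to 0^-$ one has $r_{H,c}^-\to 0$, and comparing $k_H(r_{H,c}^-)=c$ with $k_H(r)\sim-\sqrt{2M}\,r^{\frac32}$ near $r=0$ gives $r_{H,c}^-\sim(c^2/2M)^{\frac13}$. Substituting $x=r_{H,c}^-\,u$ with $u\in[0,1]$, the factors behave as $\frac{x}{2M-x}\sim\frac{r_{H,c}^-u}{2M}$, $-Hx^3-c\sim|c|$, and $(k_H(x)-c)(\tilde{k}_H(x)-c)\sim c^2(1-u^3)$, so after pulling out a factor $(r_{H,c}^-)^2$ the rescaled integrand converges, with a uniform integrable dominating function, to a fixed multiple of $u(1-u^3)^{-\frac12}$. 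Dominated convergence then gives $f(0;H,c)=O\big((r_{H,c}^-)^2\big)\to 0$.

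For the limit $c\to c_H$ the dominant contribution comes from a neighborhood of the minimizer $r_H$ of $k_H$, where the singularity strengthens because $k_H'(r_H)=0$. Writing $c=c_H+\delta$ with $\delta\to 0^+$, one has $k_H(x)-c\approx\frac12 k_H''(r_H)(x-r_H)^2-\delta$ near $r_H$, so the endpoint satisfies $r_H-r_{H,c}^-\sim\sqrt{2\delta/k_H''(r_H)}$. Using positivity of the integrand, I would bound $f(0;H,c)$ below by the integral over $[r_H-\varepsilon,r_{H,c}^-]$, where the factors $\frac{x}{2M-x}$, $-Hx^3-c$ and $(\tilde{k}_H(x)-c)^{-\frac12}$ are bounded below by positive constants uniformly for $c$ near $c_H$ (note $\tilde{k}_H(r_H)-c_H=2r_H^{\frac32}(2M-r_H)^{\frac12}>0$). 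This reduces the estimate to a constant multiple of $\int\big(\frac12 k_H''(r_H)(x-r_H)^2-\delta\big)^{-\frac12}\mathrm{d}x$, which evaluates to a $\cosh^{-1}$ and diverges like $\log(1/\delta)$ as $\delta\to 0$, yielding $\lim_{c\to c_H}f(0;H,c)=\infty$.

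The main obstacle will be the $c\to c_H$ analysis: one must verify that $r_H$ is a nondegenerate interior minimum, i.e.\ $k_H''(r_H)>0$ (checked by differentiating (\ref{k2function})), and localize the integral so that the logarithmic blow-up coming from the double root of $k_H(x)-c_H$ is not masked by the vanishing of any other factor; should the minimum be degenerate the endpoint singularity is only stronger, so the conclusion $f(0;H,c)\to\infty$ is unaffected. By contrast the $c\to 0$ estimate is comparatively routine once the rescaling $x=r_{H,c}^-u$ is in place, since all factors are then controlled uniformly and the geometric prefactor $(r_{H,c}^-)^2$ forces the limit to zero.
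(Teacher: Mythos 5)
Your proposal is correct; it proves both limits by the same overall strategy as the paper (a lower bound that blows up as $c\to c_H$, an upper bound that collapses as $c\to 0$), but with genuinely different technical devices. Where you factor the radicand as $(Hx^3+c)^2+x^3(x-2M)=(k_H(x)-c)(\tilde{k}_H(x)-c)$ and Taylor-expand $k_H$ at its minimum, the paper divides by powers of $(R-x)$, writing the radicand as $(R-x)^2P_2(x,R)+(R-x)Q_2(R)$ with $Q_2(R)=2R^2\bigl(3HR^{\frac32}(2M-R)^{\frac12}+3M-2R\bigr)$, bounds the remaining factors below on $[\frac{r_H}{2},R]$, and evaluates $\int_{r_H/2}^{R}\bigl((R-x)^2Q_1+(R-x)Q_2\bigr)^{-\frac12}\,\mathrm{d}x$ as an explicit logarithm that diverges because $Q_2(R)\to 0$ as $R\to r_H$. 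This is the same mechanism you exploit, since $Q_2(R)=-2R^{\frac32}(2M-R)^{\frac12}\,k_H'(R)$, so ``$Q_2\to 0$'' is exactly your ``$k_H'(r_H)=0$''; but the paper's version needs no expansion at $r_H$, so the nondegeneracy $k_H''(r_H)>0$ that you must check never enters (you could also bypass it via Fatou's lemma: smoothness alone gives $k_H(x)-c_H\leq C(x-r_H)^2$ near $r_H$, hence $\int(k_H(x)-c_H)^{-\frac12}\,\mathrm{d}x=\infty$). For $c\to 0$ the paper splits the integral at $R/2$ and bounds the two pieces crudely (boundedness of $l^2/(l^2-1)$ on $[0,R/2]$, the $(R-x)^{-\frac12}$ singularity on $[R/2,R]$), while your rescaling $x=Ru$ with dominated convergence is more systematic and yields the sharp rate $f(0;H,c)=O(R^2)$; its only extra cost is verifying the uniform domination near $u=1$, e.g.\ by a mean-value estimate of the form $k_H(Ru)-c\geq\kappa R^{\frac32}(1-u)$, which does hold. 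In short, your route is more conceptual (the $k_H$, $\tilde{k}_H$ factorization makes the degeneration at $r_H$ transparent) and more quantitative, while the paper's is blunter but avoids Taylor remainders and reuses the same $P_i$, $Q_i$ polynomial machinery in its companion estimates such as Proposition~\ref{prop23}.
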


\begin{proof}
First, we know
\begin{align*}
f(0;H,c)
\geq&\int_{\frac{r_H}2}^{R}\frac{r}{r-2M}\frac{Hr^3+c(R)}{\sqrt{(Hr^3+c(R))^2+r^3(r-2M)}}\,\mathrm{d}r\ \ \mbox{ for all }R>\frac{r_H}2,
\end{align*}
where $c(R)=-HR^3-R^{\frac32}(2M-R)^{\frac12}$.
Consider
\begin{align*}
&\ (Hr^3+c(R))^2+r^3(r-2M) \\
=&\ (R-r)P_1(r,R,\mbox{deg}=5) \\
=&\ (R-r)^2P_2(r,R,\mbox{deg}=4)+(R-r)(2R^2)
(3HR^{\frac32}(2M-R)^{\frac12}+3M-2R),
\end{align*}
where $P_1$ and $P_2$ are polynomials with respect to $r$.
Because $(Hr^3+c(R))^2+r^3(r-2M)\geq 0$, we know $P_1(r,R,\mbox{deg}=5)>0$ for all $r\in[0,R]$.
We also have $3HR^{\frac32}(2M-R)^{\frac12}+3M-2R>0$ for $R<r_H$.
Let
\begin{align*}
Q_1(R)&=\max_{r\in[\frac{r_H}2,R]}P_2(r,R,\mbox{deg}=4), \\
Q_2(R)&=2R^2(3HR^{\frac32}(2M-R)^{\frac12}+3M-2R)>0, \\
m(R)&=\min_{r\in[\frac{r_H}2,R]}\frac{r(Hr^3+c(R))}{r-2M},\quad\mbox{and}\quad
m=\min_{R\in[\frac{r_H}2,r_H]}m(R)>0.
\end{align*}
We have
\begin{align*}
f(0;H,c)
\geq&\ m\int_{\frac{r_H}2}^{R}\frac{1}{\sqrt{(R-r)^2Q_1(R)+(R-r)Q_2(R)}}\,\mathrm{d}r.
\end{align*}
The quantity $Q_2\to 0$ and $Q_1$ bounded as $R\to r_H$ ($c\to c_H)$ imply $f(0;H,c)\to\infty$ as $c\to c_H$.
This can also be seen by direct computation that the integration gives
\begin{align*}
\frac{m}{\sqrt{Q_1}}\ln\left|\frac{2Q_1}{Q_2}\left(R-\frac{r_H}{2}+\frac{Q_2}{2Q_1}\right)
+\sqrt{\left(\frac{2Q_1}{Q_2}\left(R-\frac{r_H}{2}+\frac{Q_2}{2Q_1}\right)\right)^2-1}\right|.
\end{align*}

When $c$ tends to $0$, we split $f(0;H,c)$ into two parts
\begin{align*}
f(0;H,c)
=&\int_{0}^{\frac{R}{2}}\frac{1}{-h(r)}\sqrt{\frac{l^2(r;c)}{l^2(r;c)-1}}\,\mathrm{d}r
+\int_{\frac{R}2}^{R}\frac{1}{-h(r)}\sqrt{\frac{l^2(r;c)}{l^2(r;c)-1}}\,\mathrm{d}r \\
=&\ \mbox{(I)}+\mbox{(II)}.
\end{align*}
The square root term of the first part has a maximum value at $r=r_*=r_*(R)$, so
{\small
\begin{align*}
\mbox{(I)}
\leq\sqrt{\frac{l^2(r_*;c)}{l^2(r_*;c)-1}}\int_{0}^{\frac{R}{2}}
\frac{1}{-h(r)}\,\mathrm{d}r
=\sqrt{\frac{l^2(r_*;c)}{l^2(r_*;c)-1}}\left(-\frac{R}{2}+2M\ln\left|\frac{2M}{2M-\frac{R}{2}}\right|\right).
\end{align*}}
For the second part,
\begin{align*}
\mbox{(II)}
=&\ \int_{\frac{R}2}^{R}\frac{1}{-h(r)}\frac{-Hr^3-c}{\sqrt{(Hr^3+c)^2+r^3(r-2M)}}\,\mathrm{d}r \\
\leq&\ Q_3(R)\int_{\frac{R}2}^{R}\frac{1}{\sqrt{R-r}}\,\mathrm{d}r
=Q_3(R)\sqrt{2R},
\end{align*}
where $Q_3(R)= \max_{r\in[\frac{R}2,R]}\frac{1}{-h(r)}\frac{-Hr^3-c(R)}{\sqrt{P_1(r,R,\mbox{\footnotesize deg}=5)}}$.
Hence we have
\begin{align*}
0\leq f(0;H,c)
\leq
\sqrt{\frac{l^2(r_*;c)}{l^2(r_*;c)-1}}
\left(-\frac{R}{2}+2M\ln\left|\frac{2M}{2M-\frac{R}{2}}\right|\right)
+Q_3(R)\sqrt{2R}.
\end{align*}
As $c\to 0$, we have $R\to 0$ and $l^2(r_*;c)$ bounded away from $1$ as well as $Q_3(R)$ being bounded.
So right hand side of the above inequality tends to zero when  $R \to 0$, and it gives $f(0;H,c)\to 0$ as $c\to 0$.
\end{proof}

Since $\lim\limits_{c\to c_H}f(0,H,c)=\infty$ and $\lim\limits_{c\to 0}f(0;H,c)=0$,
for any level set $t=t_0>0$ there is $c_0$ such that $t_0=f(0;H,c_0)$ and $f(0;H,c)>t_0$ for all $c\in(c_H,c_0)$.
For given $H\in\mathbb{R}$ and $c\in(c_H,0)$, because $f(r;H,c)>t_0>0$ and $f(R;H,c)=0$, there exists $r=r(c,t_0)$ such that $f(r;H,c)=t_0$.

\begin{prop}
The \TSC family $\Sigma_{H,c_H<c<0}^-$
pointwise converges to the cylindrical hypersurface $r=r_H$ as $c\to c_H^+$.
\end{prop}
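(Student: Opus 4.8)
The plan is to read the convergence pointwise along the level sets of the Killing time $t$: for each fixed $t_0$ I will show that the $r$-coordinate $r(c,t_0)$ at which $\Sigma^-_{H,c}$ meets the slice $t=t_0$ tends to $r_H$ as $c\to c_H^+$, so that by continuity of the coordinate change (\ref{trans}) the corresponding point in the Kruskal plane converges to the point of the cylinder $r=r_H$ at the same Killing time. By the $T$-axisymmetry of $\Sigma^-_{H,c}$ (Proposition~\ref{propTaxisymm}) it suffices to treat $t_0\geq 0$, i.e.\ the $U+V\geq 0$ portion. For $t_0=0$ the crossing point is $r=r^-_{H,c}$, and since $k_H^-(r^-_{H,c})=c\to c_H=k_H(r_H)$ with $k_H^-$ strictly monotone, $r^-_{H,c}\to r_H$ immediately. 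For $t_0>0$, by the remark preceding this proposition, $r(c,t_0)$ is well defined for $c$ close to $c_H$ and satisfies $r(c,t_0)<R:=r^-_{H,c}$ together with $t_0=\int_{r(c,t_0)}^R(-f_-'(x;c))\,\mathrm{d}x$.

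\textbf{Setting up the integrand.} Using $l^2/(l^2-1)=(Hr^3+c)^2/\Phi_c(r)$ with $\Phi_c(r)=(Hr^3+c)^2+r^3(r-2M)$, the integrand from (\ref{Extendedatzero}) rewrites as
\begin{align*}
-f_-'(x;c)=\frac{x}{2M-x}\,\frac{-Hx^3-c}{\sqrt{\Phi_c(x)}},
\end{align*}
which is positive on $(0,R]$ because there $-Hx^3-c>0$ and $\Phi_c>0$ with a simple zero at $x=R$. The whole argument hinges on the fact that $r=r_H$ is a \emph{double} zero of $\Phi_{c_H}$: since $c_H=k_H(r_H)$ and $r_H$ minimizes $k_H$, one computes $-Hr_H^3-c_H=r_H^{3/2}(2M-r_H)^{1/2}$, hence $\Phi_{c_H}(r_H)=\Phi_{c_H}'(r_H)=0$.

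\textbf{The key estimate.} I would argue by contradiction. Suppose $r(c,t_0)\not\to r_H$; then there exist $\delta>0$ and a sequence $c_n\to c_H^+$ with $r(c_n,t_0)\leq r_H-\delta$. Writing $R_n=r^-_{H,c_n}\to r_H^-$, for large $n$ we have $r_H-\delta<R_n$, so
\begin{align*}
t_0=\int_{r(c_n,t_0)}^{R_n}(-f_-'(x;c_n))\,\mathrm{d}x\geq\int_{r_H-\delta}^{R_n}(-f_-'(x;c_n))\,\mathrm{d}x.
\end{align*}
On $[r_H-\delta,r_H]$ the factor $\tfrac{x}{2M-x}(-Hx^3-c_n)$ converges to $r_H^{5/2}(2M-r_H)^{-1/2}>0$, hence exceeds some $m_0>0$ for $\delta$ small and $n$ large. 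Two elementary comparisons bound $\Phi_{c_n}$ from above near $r_H$: first, $\Phi_{c_n}(x)-\Phi_{c_H}(x)=(c_n-c_H)(2Hx^3+c_n+c_H)\leq 0$ there, since $2Hr_H^3+2c_H=-2r_H^{3/2}(2M-r_H)^{1/2}<0$; second, the vanishing of $\Phi_{c_H}$ and $\Phi_{c_H}'$ at $r_H$ gives, by Taylor's theorem, $\Phi_{c_H}(x)\leq A_2(r_H-x)^2$ on $[r_H-\delta,r_H]$ for some $A_2>0$. Combining, $\Phi_{c_n}(x)\leq A_2(r_H-x)^2$, whence
\begin{align*}
\int_{r_H-\delta}^{R_n}(-f_-'(x;c_n))\,\mathrm{d}x\geq\frac{m_0}{\sqrt{A_2}}\int_{r_H-\delta}^{R_n}\frac{\mathrm{d}x}{r_H-x}=\frac{m_0}{\sqrt{A_2}}\ln\frac{\delta}{r_H-R_n}\longrightarrow\infty,
\end{align*}
because $r_H-R_n\to 0^+$. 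This contradicts the constant value $t_0$, so $r(c,t_0)\to r_H$, which is the asserted pointwise convergence.

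\textbf{Main obstacle.} The delicate point is precisely the transition of the endpoint singularity of $-f_-'$ from the integrable type $(R-x)^{-1/2}$ for fixed $c$ to the logarithmically divergent type $(r_H-x)^{-1}$ in the limit $c\to c_H$. The crude estimate $\Phi_c(x)\leq P_1^{\max}(R-x)$ only produces a finite integral and is therefore too weak; one must exploit the degeneracy of the double zero of $\Phi_{c_H}$ through the two comparison inequalities above to force genuine divergence. Keeping the two bounds uniform in $n$ on the shrinking interval $[r_H-\delta,R_n]$ is where the care is needed, and it is the heart of the proof.
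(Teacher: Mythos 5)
Your proof is correct and follows the same skeleton as the paper's: reduce pointwise convergence to the claim $\lim_{c\to c_H^+}r(c,t_0)=r_H$ along each Killing-time slice $t=t_0$, then argue by contradiction that if $r(c_n,t_0)\le r_H-\delta$ along a sequence, the integral defining $t_0$ must blow up. The difference lies in how the blow-up is established, and there your route is genuinely distinct from (and more detailed than) the paper's. The paper simply asserts that the right-hand side ``will be unbounded,'' implicitly reusing the estimate from its earlier proposition on $\lim_{c\to c_H}f(0;H,c)=\infty$: there $\Phi_{c(R)}(x)=(Hx^3+c(R))^2+x^3(x-2M)$ is expanded about its root $x=R$ as $(R-x)^2P_2(x,R)+(R-x)Q_2(R)$, where $Q_2(R)=2R^2\bigl(3HR^{\frac32}(2M-R)^{\frac12}+3M-2R\bigr)\to 0$ as $R\to r_H$, and the explicit logarithmic antiderivative of the resulting lower bound diverges as $Q_2\to 0$. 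You instead compare with the limiting potential: monotonicity of $\Phi_c$ in $c$ near $r_H$ (valid because $2Hx^3+c_n+c_H$ stays negative there) gives $\Phi_{c_n}\le\Phi_{c_H}$, and the double zero of $\Phi_{c_H}$ at $r_H$ (equivalent to the cylindrical relation $3Hr_H^{\frac32}(2M-r_H)^{\frac12}=2r_H-3M$) gives $\Phi_{c_H}(x)\le A_2(r_H-x)^2$, so the integrand dominates a multiple of $(r_H-x)^{-1}$ and the integral over $[r_H-\delta,R_n]$ diverges like $\ln\bigl(\delta/(r_H-R_n)\bigr)$. Both mechanisms encode the same degeneration of the simple root at $R$ into a double root at $r_H$; yours buys a self-contained argument (no tracking of $P_2$, $Q_1$, $Q_2$) with the uniformity in $n$ made explicit, which is precisely the step the paper leaves implicit, while the paper's version doubles as its proof that $f(0;H,c)\to\infty$ and gives a bound uniform on the fixed interval $[\frac{r_H}2,R]$ rather than only on a shrinking neighborhood of $r_H$.
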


\begin{proof}
For \SC hypersurfaces
\begin{align*}
\left(f(r;H,c)
=\int_r^R\frac{x}{x-2M}\frac{Hx^3+c}{\sqrt{(Hx^3+c)^2+x^3(x-2M)}}\,\mathrm{d}x,r,\theta,\phi\right),
\end{align*}
$f(r;H,c)$ is a continuous function with respect to the parameter $c$.
To prove the proposition, it suffices to show that
\begin{align*}
\lim_{c\to c_H}r(c,t_0)=r_H.
\end{align*}

Fix $t_0>0$, for any $c\in(c_H,c_0)$ there exists $r_0=r_0(R)$ such that
\begin{align*}
t_0=\int_{r_0(R)}^{R}\frac{r}{r-2M}\frac{Hr^3+c}{\sqrt{(Hr^3+c)^2+r^3(r-2M)}}\,\mathrm{d}r.
\end{align*}
Note that $c=-HR^3-R^{\frac32}(2M-R)^{\frac12}$, where $c$ and $R$ can determine each other uniquely in the family $\Sigma_{H,c_H<c<0}^-$
($\tilde{\Sigma}_{H,0<c<C_H}^+, \tilde{\Sigma}_{H,C_H>c>-8M^3H}^-$, and $\Sigma_{H,-8M^3H\geq c>c_H}^+$).
Hence we can use $R$ as parameter instead.
Letting $c$ tend to $c_H$,
if $r_0(R)\not\to r_H$,
then right hand side will be unbounded, and it contradicts to the finite value of left hand side.
Hence we have $r_0(R)\to r_H$.
\end{proof}

Combining all the results above gives the following theorem.
\begin{thm}\label{CMCfoliation1}
For all $H\in\mathbb{R}$,
the \TSC family $\Sigma^-_{H,c_H<c<0}$ forms a foliation between two cylindrical hypersurfaces $r=0$ and $r=r_H$ in the Kruskal extension {\tt I$\!$I}.
\end{thm}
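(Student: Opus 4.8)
The plan is to assemble the pieces established in the three preceding propositions into the two properties that together define a foliation: pairwise disjointness of the leaves, and the covering of every point of the region $\{0<r<r_H\}$ of region {\tt I$\!$I} by exactly one leaf. Disjointness is already in hand, since the proposition establishing $\frac{\mathrm{d}\ln V(0;c)}{\mathrm{d}c}\le 0$ for every $c\in(c_H,0)$ feeds directly into Proposition~\ref{prop15}, which forces the hypersurfaces $\Sigma^-_{H,c}$ to be mutually disjoint. It therefore remains to prove the covering property. By the $T$-axisymmetry, condition~(\ref{Tsymm}), it suffices to cover points with $t\ge 0$, i.e. the part of each leaf lying in $U+V\ge 0$, which is the graph $t=f(r;H,c)$ on $(0,r_{H,c}^-]$ with $f(r_{H,c}^-;H,c)=0$ and $f'<0$.

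First I would fix a target point $(r_*,t_*)$ with $0<r_*<r_H$ and $t_*\ge 0$, and set $\hat c=k_H^-(r_*)\in(c_H,0)$. Since $k_H^-$ is the decreasing branch, its inverse $c\mapsto r_{H,c}^-$ is decreasing, so for every $c\in(c_H,\hat c]$ we have $r_{H,c}^-\ge r_*$ and the value $g(c):=f(r_*;H,c)$ is defined. The function $g$ is continuous in $c$ by the integral formula for $f$, and $g(\hat c)=0$ because $r_*=r_{H,\hat c}^-$ is exactly the base point where $f$ vanishes. At the other end I claim $g(c)\to\infty$ as $c\to c_H^+$: by the pointwise-convergence proposition, for any $t_0$ the radius $r_0(c)$ solving $f(r_0(c);H,c)=t_0$ satisfies $r_0(c)\to r_H$, so for $c$ near $c_H$ we have $r_0(c)>r_*$, and since $f(\,\cdot\,;H,c)$ is decreasing in $r$ this gives $f(r_*;H,c)>t_0$; as $t_0$ is arbitrary, $g(c)\to\infty$. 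The intermediate value theorem then produces $c\in(c_H,\hat c]$ with $g(c)=t_*$, so some leaf passes through $(r_*,t_*)$, and uniqueness of this $c$ follows from the strict monotonicity of $g$ in $c$, equivalently from $V(r;c)$ being strictly decreasing in $c$ as in the disjointness argument.

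Finally I would identify the two boundary hypersurfaces. The two limit propositions, $\lim_{c\to 0}f(0;H,c)=0$ and $\lim_{c\to c_H}f(0;H,c)=\infty$, show that the leaves accumulate on the hyperbola $r=0$ as $c\to 0^+$ and on the cylindrical hypersurface $r=r_H$ as $c\to c_H^+$, so the family genuinely fills the open region bounded by $r=0$ and $r=r_H$ and no further leaf is needed. Combining disjointness, the covering property, and this boundary behaviour yields the theorem. The step I expect to be the real work is the covering argument, and within it the divergence $g(c)\to\infty$ as $c\to c_H^+$ for fixed $r_*<r_H$: although it is extracted from the pointwise-convergence proposition, one must be careful that the estimate is strong enough to conclude blow-up at a fixed radius rather than merely along the moving level set, and that the monotonicity invoked for uniqueness holds on the whole overlap of the domains rather than only near the $T$-axis.
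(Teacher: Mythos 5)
Your proposal is correct and assembles exactly the same ingredients as the paper: disjointness from the criterion $\frac{\mathrm{d}\ln V(0;c)}{\mathrm{d}c}\le 0$ fed into Proposition~\ref{prop15}, and covering from continuity of $f(r;H,c)$ in $c$, the limits of $f(0;H,c)$, and the pointwise convergence of the family to $r=r_H$. The only difference is organizational: the paper fixes a level set $t=t_0$ and tracks the crossing radius $r(c,t_0)$ as $c$ varies, whereas you fix the target point $(r_*,t_*)$ and apply the intermediate value theorem to $c\mapsto f(r_*;H,c)$ anchored at $\hat{c}=k_H^-(r_*)$; both arguments correctly invoke the pointwise-convergence proposition (rather than $\lim_{c\to c_H}f(0;H,c)=\infty$ alone) to obtain blow-up at a fixed radius.
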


The same arguments lead to the \C foliation for $\tilde{\Sigma}_{H,0<c\leq C_H}^+$:
\begin{thm}\label{CMCfoliation2}
For all $H\in\mathbb{R}$,
the \TSC family $\tilde{\Sigma}_{H,0<c\leq C_H}^+$ forms a foliation between two cylindrical hypersurfaces $r=0$ and $r=R_H$ in the Kruskal extension {\tt I$\!$I'}.
\end{thm}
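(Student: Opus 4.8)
The plan is to transcribe, essentially line for line, the four-step argument already carried out for Theorem~\ref{CMCfoliation1}, under the symmetry that interchanges region {\tt I$\!$I} with {\tt I$\!$I'}, the family $\Sigma^-_{H,c_H<c<0}$ with $\tilde{\Sigma}^+_{H,0<c\leq C_H}$, the decreasing branch $k_H^-$ with the increasing branch $\tilde{k}_H^+$, and the $V$-coordinate with the $U$-coordinate. As before, the $T$-axisymmetry~(\ref{Tsymm}) lets me work only on the part of each leaf lying in $U+V>0$; in region {\tt I$\!$I'} the positive coordinate along which one compares neighboring leaves is $U$ rather than $V$, which is exactly why the relevant monotonicity criterion is phrased in terms of $U$.

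First I would prove that the family is disjoint. By the criterion for $\tilde{\Sigma}^+_{H,0<c<C_H}$ stated just after Proposition~\ref{prop15}, it suffices to verify $\frac{\mathrm{d}\ln U(0;c)}{\mathrm{d}c}\geq 0$ for all $c\in(0,C_H)$. Differentiating the explicit expression for $U(0;c)$ under the integral sign---carrying out the computation of Appendix~\ref{app71} with $l_4$ and $\tilde{k}_H^+$ in place of $l_2$ and $k_H^-$---produces a formula of the same shape as~(\ref{VformulaII0}), whose integrand is assembled from the analogues $\tilde{F}(x,R)$ and $\tilde{G}(x,R)$ of the functions in~(\ref{Ffunction})--(\ref{Gfunction}), with $R=\tilde{r}^+_{H,c}$. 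The desired sign then reduces to showing that $H\cdot\tilde{F}(x,R)+\tilde{G}(x,R)$ keeps the correct constant sign on $x\in[0,R]$.

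Next I would prove the covering property. Since the integrand defining $f(0;H,c)$ stays integrable down to $r=0$ for $c\in(0,C_H)$, the $T$-intercept $f(0;H,c)$ is well defined, and I would establish the two limits $\lim_{c\to C_H}f(0;H,c)=\infty$ and $\lim_{c\to 0}f(0;H,c)=0$ by the same device as before: factor $(Hr^3+c(R))^2+r^3(r-2M)=(R-r)P_1=(R-r)^2P_2+(R-r)Q_2$, where the vanishing of the $Q_2$-factor as $R\to R_H$ (that is, $c\to C_H$) forces the blow-up, while the splitting of the integral over $[0,R/2]\cup[R/2,R]$ forces the intercept to $0$ as $R\to 0$ (that is, $c\to 0$). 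An intermediate-value argument then produces, for each level $t=t_0>0$, a leaf through the prescribed point, and the continuity-and-unboundedness argument (the analogue of the last Proposition before Theorem~\ref{CMCfoliation1}) gives the pointwise convergence $\tilde{\Sigma}^+_{H,c}\to\{r=R_H\}$ as $c\to C_H^-$, so that the cylindrical hypersurface $r=R_H$ closes off the family. Combining disjointness with the covering property foliates the slab $\{0<r<R_H\}$ of region {\tt I$\!$I'}.

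The only genuine obstacle I anticipate is the sign of $H\cdot\tilde{F}+\tilde{G}$ when $H<0$, exactly as in the region {\tt I$\!$I} proof. For $H\geq 0$ each term should separately carry the needed sign. For $H<0$ I expect to need the defining relation between $H$ and $R_H$ coming from the critical-point condition $\tilde{k}_H'(R_H)=0$ (the analogue of~(\ref{HRformula})), followed by the nondimensionalization $a=2M/R_H$, $b=2M/R$, $z=x/R$, which should collapse the inequality to an elementary polynomial estimate in $z\in[0,1]$ under constraints analogous to $b>a>\frac43$ inherited from $R<R_H$. Verifying that this reduction again yields a manifestly signed expression is the delicate point; every remaining step is a direct transcription of the region {\tt I$\!$I} argument under the symmetry $V\leftrightarrow U$, $k_H^-\leftrightarrow\tilde{k}_H^+$.
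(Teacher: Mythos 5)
Your overall strategy coincides with the paper's own: the paper proves Theorem~\ref{CMCfoliation1} in detail and then obtains Theorem~\ref{CMCfoliation2} by ``the same arguments,'' i.e.\ precisely the transcription $V\leftrightarrow U$, $k_H^-\leftrightarrow\tilde{k}_H^+$, region {\tt I$\!$I}$\,\leftrightarrow\,${\tt I$\!$I'} that you describe, and your four steps (disjointness criterion, the two limits of the intercept, pointwise convergence to the cylinder, intermediate value argument) are the right ones.

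However, there is a concrete error in the one step you yourself flag as delicate: your case analysis in $H$ is backwards. The correspondence carrying region {\tt I$\!$I'} to region {\tt I$\!$I} is the time reversal $(U,V)\mapsto(V,U)$, which reverses the time orientation and hence the sign of the mean curvature: it identifies $\tilde{\Sigma}^+_{H,c}$ with $\Sigma^-_{-H,-c}$ (indeed $l_4(r;H,c)=l_2(r;-H,-c)$, $\tilde{k}_H=-k_{-H}$, so the branch $\tilde{k}_H^+$ matches $k_{-H}^-$, and $R_H=r_{-H}$, $C_H=-c_{-H}$). Consequently the integrand arising when you differentiate $\ln U(0;c)$ is built from $(-H)\cdot F+G$ with the \emph{same} negative functions $F,G$ of (\ref{Ffunction}), (\ref{Gfunction}); the easy case in which both terms separately carry the needed sign is therefore $H\leq 0$, and the delicate case is $H>0$, not $H<0$ as you claim. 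As written, your plan fails on both branches: for $H>0$ the two terms have opposite signs, so ``each term separately carries the needed sign'' is false; and for $H<0$ the constraint $b>a>\frac43$ you propose to invoke is unavailable, because the cylinder relation $H=\frac{3M-2R_H}{3\sqrt{R_H^3(2M-R_H)}}$ forces $R_H>\frac{3M}{2}$, i.e.\ $a=\frac{2M}{R_H}<\frac43$, when $H<0$. The repair is simply to swap the cases: for $H>0$ one has $R<R_H<\frac{3M}{2}$, hence $b>a>\frac43$, and the paper's $H<0$ estimate for region {\tt I$\!$I} transcribes verbatim. (Alternatively, all recomputation can be avoided: the isometry $(U,V)\mapsto(V,U)$ preserves $T$-axisymmetry and maps the already-proven foliation of Theorem~\ref{CMCfoliation1} with curvature $-H$ exactly onto the family $\tilde{\Sigma}^+_{H,0<c\leq C_H}$, which proves Theorem~\ref{CMCfoliation2} for all $H$ at once.)
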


\subsection{CMC foliation for $\Sigma_{H,-8M^3H>c>c_H}^+$ and $\tilde{\Sigma}_{H,C_H>c>-8M^3H}^-$}\label{CMCfoliationII}
\begin{prop}
There exists a constant $C>0$ such that for any given $H\geq-C$,
$\frac{\mathrm{d}\ln V(2M;c)}{\mathrm{d}c}\leq 0$, which means hypersurfaces in
$\Sigma_{H,-8M^3H>c>c_H}^+$ are disjoint in region {\tt I$\!$I} when $H\geq -C$.
\end{prop}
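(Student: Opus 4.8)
The plan is to invoke the disjointness criterion of Proposition~\ref{prop17} with the outer radius pushed to the horizon. Each $\Sigma^+_{H,c}$ sweeps region~{\tt I$\!$I} over $r\in[r_{H,c}^+,2M)$, so taking the endpoint $r'=2M$ in Proposition~\ref{prop17}---and using that for positive $V$ the criterion $\frac{\mathrm{d}V}{\mathrm{d}c}\le0$ is equivalent to $\frac{\mathrm{d}\ln V}{\mathrm{d}c}\le0$, as remarked after the propositions---reduces everything to producing a constant $C>0$ with
\[
\frac{\mathrm{d}\ln V(2M;c)}{\mathrm{d}c}\le 0\qquad\mbox{whenever } H\ge -C,\ c\in(c_H,-8M^3H).
\]
Here $V(2M;c)=\lim_{r\to 2M^-}V(r;c)$ from (\ref{Vcoord}) is finite and positive for each fixed $c<-8M^3H$: the only singularity of $\bar f'$ in (\ref{barf}) is the integrable $(x-r_{H,c}^+)^{-1/2}$ blow-up at the lower endpoint, while $2M\ln|r_{H,c}^+-2M|$ is finite since then $r_{H,c}^+\in(r_H,2M)$.

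First I would differentiate $\ln V(2M;c)$ directly from (\ref{Vcoord}), writing $R=r_{H,c}^+$ and converting $\frac{\mathrm{d}}{\mathrm{d}c}$ into $\frac{\mathrm{d}}{\mathrm{d}R}$ through the defining relation $k_H^+(R)=c$, so that $\frac{\mathrm{d}c}{\mathrm{d}R}=(k_H^+)'(R)>0$ on the increasing branch. Three contributions appear: the endpoint derivative of $R+2M\ln|R-2M|$, the Leibniz boundary term $-\bar f'(R;c)\,\frac{\mathrm{d}R}{\mathrm{d}c}$, and the interior term $\int_R^{2M}\frac{\partial\bar f'}{\partial c}\,\mathrm{d}x$. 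Since $\bar f'(R;c)=\infty$ at the endpoint (where $l=1$), the Leibniz term cannot be taken literally; instead I would regularize the lower limit at $R+\varepsilon$, cancel the divergent endpoint contribution against the matching divergence of the interior integral, and let $\varepsilon\to0$. This is exactly the bookkeeping carried out for $V(0;c)$ in Appendix~\ref{app71}, and I expect it to yield a formula of the shape
\[
\frac{\mathrm{d}\ln V(2M;c(R))}{\mathrm{d}c}
=\frac{1}{4M\,J(R)\sqrt{-h(R)}}\left(\int_R^{2M}\frac{H\,\widehat F(x,R)+\widehat G(x,R)}{(x-R)^{1/2}(\widehat P(x,R))^{3/2}}\,\mathrm{d}x-1\right),
\]
with $\widehat F,\widehat G,\widehat P$ the analogues of $F,G,P$ adapted to the endpoint $r=2M$ and to the increasing branch $k_H^+$.

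The sign then hinges on the bracketed factor, since $J(R)$ and $\sqrt{-h(R)}$ keep a fixed sign throughout $R\in(r_H,2M)$. For $H\ge 0$ I expect $H\,\widehat F+\widehat G$ to be of one sign on $[R,2M]$---just as $H\,F+G\le0$ was on $[0,R]$ in the preceding proposition---so the required integral comparison holds automatically. The case $H<0$ is where a genuine threshold is forced: the term $H\,\widehat F$ now opposes $\widehat G$, so I would pass to the normalized variables $a=\frac{2M}{r_H}$, $b=\frac{2M}{R}$, $z=\frac{x}{R}$ together with the relation (\ref{HRformula}) for $HR$, reducing both $H\,\widehat F+\widehat G\le 0$ and the comparison against the boundary term to an elementary inequality in $(a,b,z)$ on the admissible range.

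The hard part will be that, unlike the $r=0$ computation, this reduced inequality does not survive for all negative $H$: near $R=2M$ the factor $\sqrt{-h(R)}=\sqrt{(2M-R)/R}$ degenerates and the $H\,\widehat F$ contribution can outweigh $\widehat G$ once $H$ is sufficiently negative, while near $R=r_H$ the quantity $J(R)$ is itself delicate. The entire content of the proposition is thus to extract a single lower bound $-C$, uniform over the whole admissible interval $R\in(r_H,2M)$ (equivalently $c\in(c_H,-8M^3H)$), below which the $(a,b,z)$-inequality is guaranteed; concretely I would track the worst admissible constant in that inequality over $R$ and define $C$ as its reciprocal. Securing this \emph{uniform}---rather than merely pointwise-in-$R$---threshold is the main obstacle.
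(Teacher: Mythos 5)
You follow essentially the same route as the paper's proof: invoking Proposition~\ref{prop17} at $r'=2M$, deriving (via the same endpoint regularization carried out in Appendix~\ref{app72}) exactly the formula (\ref{criterion2M}), and settling the sign through $H\cdot F+G<0$ on $[R,2M]$ for $H\geq 0$, where the paper proves $F<0$ by the same $(z,b)$-normalization you propose. The only divergence is the final extension to $H\geq -C$: the paper dispatches it with a one-line continuity appeal from the strict inequality at $H\geq 0$, whereas you plan a quantitative, uniform-in-$R$ tracking of the threshold --- the same idea at a more demanding level of detail, and arguably more candid about the uniformity issue that the paper's continuity argument glosses over.
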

\begin{proof}
We refer to the Appendix \ref{app72} for the calculation of $\frac{\mathrm{d}\ln V(2M;c(R))}{\mathrm{d}c}$,
which gives
\begin{align*}
\frac{\mathrm{d}\ln V(2M;c(R))}{\mathrm{d}c}
=\frac{1}{4MJ(R)\sqrt{-h(R)}}
\left(\int_{R}^{2M}
\frac{H\cdot F(x,R)+G(x,R)}
{(x-R)^{\frac12}(P(x,R))^{\frac32}}\,\mathrm{d}x-1\right)
\end{align*}
in (\ref{criterion2M}).
Note that $c(R)=-HR^3-R^{\frac32}(2M-R)^{\frac12}$, $J(R)>0$ is given in (\ref{JR}) and $F(x,R), G(x,R)$ are in (\ref{Ffunction2}), (\ref{Gfunction}), respectively.
Function $G(x,R)$ is negative on $[R,2M]$ because $R<2M$ and $x<2M$.

Next we show that $F(x,R)$ is negative on $[R,2M]$ when $H\geq 0$.
By the change of variables $b=\frac{2M}R$ and $z=\frac{x}R$, $F(x,R)$ can be expressed as
\begin{align*}
R^5z^2\left(-z^3+\left(\frac{3b}2-1\right)z^2+\left(2-\frac{3b}2\right)z-\frac{3b}2\right).
\end{align*}
Let $\bar{F}(z,b)=-z^3+\left(\frac{3b}2-1\right)z^2+\left(2-\frac{3b}2\right)z-\frac{3b}2$.
Note that $H\geq 0$ implies $b<\frac43$.
Since the coefficient of the highest order term of $\bar{F}$ is negative and $\bar{F}(\frac{3b-4}{2},b)=\bar{F}(0,b)=\bar{F}(1,b)=-\frac{3b}2<0$,
we have $\bar{F}(z,b)<0$ for $z\in[1,b]$ with $b<\frac43$, as $\frac{3b-4}{2}<0<1$ in this case.
So $F(x,R)$ is negative on $[R,2M]$ when $H\geq 0$ and thus $\frac{\mathrm{d}\ln V(2M;c(R))}{\mathrm{d}c}<0$ when $H\geq 0$.
By the continuity, we get $\frac{\mathrm{d}\ln V(2M;c(R))}{\mathrm{d}c}\leq 0$ for $H\geq -C$.
\end{proof}

\begin{prop}\label{prop23}
For \TSC hypersurfaces in $\Sigma_{H,-8M^3H>c>c_H}^+$ in region {\tt I$\!$I}, we have
\begin{align*}
\lim_{c\to c_H}V(2M;c)=\infty,\quad\mbox{and}\quad\lim_{c\to -8M^3H}V(2M;c)=0.
\end{align*}
\end{prop}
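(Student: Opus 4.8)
The plan is to work directly from the explicit formula (\ref{Vcoord}). Writing $R=r_{H,c}^+$ for the turning point (the solution of $k_H^+(r)=c$) and setting
\[
\Phi(r)=(Hr^3+c)^2+r^3(r-2M),\qquad A(r)=Hr^3+c,
\]
one has $\ln V(2M;c)=\tfrac1{4M}\bigl(R+2M\ln(2M-R)+I(R)\bigr)$ with $I(R)=\int_R^{2M}\bar f'(x)\,\mathrm{d}x$. Since $A(r)<0$ on $[R,2M]$ (at $r=R$, $A(R)=-R^{3/2}(2M-R)^{1/2}$, and at $r=2M$, $A(2M)=8M^3H+c<0$), the denominator in (\ref{barf}) factors as $\Phi-A\sqrt\Phi=\sqrt\Phi(\sqrt\Phi+|A|)$, so that
\[
\bar f'(r)=\frac{r^4}{\sqrt{\Phi(r)}\,\bigl(|A(r)|+\sqrt{\Phi(r)}\bigr)}>0 .
\]
Because $k_H^+$ increases from $c_H$ at $r_H$ to $k_H(2M)=-8M^3H$, the turning point $R$ runs from $r_H$ (as $c\to c_H$) to $2M$ (as $c\to-8M^3H$). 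The two limits then come from the competition between the boundary term $2M\ln(2M-R)$ and $I(R)$.

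For $c\to c_H$ the boundary term $R+2M\ln(2M-R)$ stays finite, so it suffices to prove $I(R)\to\infty$. The mechanism is that $r_H$ is the minimum of $k_H$, i.e. $J(r_H)=0$ for $J$ as in (\ref{JR}); a direct computation shows $\Phi'(R)$ is a positive multiple of $J(R)$, so the simple zero of $\Phi$ at $R$ degenerates into the double zero that $\Phi$ has at $r_H$ when $c=c_H$. Concretely, for $c$ slightly above $c_H$ the two roots $r_{H,c}^-<r_H<r_{H,c}^+=R$ coalesce, and near them $\Phi(r)\approx\kappa\,(r-r_{H,c}^-)(r-r_{H,c}^+)$ with $\kappa>0$ fixed, so $\bar f'(r)\sim C\bigl((r-r_{H,c}^-)(r-R)\bigr)^{-1/2}$ with $C$ bounded away from $0$. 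Since
\[
\int_R^{R+\delta}\frac{\mathrm{d}r}{\sqrt{(r-r_{H,c}^-)(r-R)}}\sim\ln\frac1{\,r_{H,c}^+-r_{H,c}^-\,}\longrightarrow\infty ,
\]
we get $I(R)\to\infty$ and hence $V(2M;c)\to\infty$. I would make this rigorous with the same type of factorization $\Phi=(r-R)P_1(r,R)$, $P_1>0$, used earlier in the paper, bounding the nonsingular factors uniformly for $R$ near $r_H$.

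For $c\to-8M^3H$ the boundary term $2M\ln(2M-R)\to-\infty$, so the plan is to show $I(R)$ stays bounded, whence $\ln V(2M;c)\to-\infty$ and $V(2M;c)\to0$. Here the turning point remains a simple zero, since $\Phi'(2M)=4(2M)^3-6M(2M)^2=8M^3>0$, giving the usual integrable $(r-R)^{-1/2}$ singularity at the lower endpoint. The subtlety is that $\bar f'$ also blows up at the upper endpoint: $A(2M)=8M^3H+c\to0$ forces $\bar f'(2M)=\tfrac{8M^4}{A(2M)^2}\to\infty$ while the interval $[R,2M]$ simultaneously shrinks. Writing $\epsilon=2M-R$ and $u=r-R$, the two behaviors combine into the uniform estimate $\bar f'(r)\approx \tfrac{2M}{\sqrt u\,(\sqrt u+\sqrt\epsilon)}$, and therefore
\[
I(R)\longrightarrow\int_0^\epsilon\frac{2M\,\mathrm{d}u}{\sqrt u\,(\sqrt u+\sqrt\epsilon)}=4M\ln 2 ,
\]
a finite limit: the shrinking interval exactly compensates the blow-up of the integrand, so the divergent boundary term wins.

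The hard part is precisely this last cancellation. As $c\to-8M^3H$ both endpoints of $I(R)$ become singular, and a crude bound would only yield $I(R)\to+\infty$, which could in principle overwhelm the boundary term; one must extract the joint asymptotics (the $\sqrt u(\sqrt u+\sqrt\epsilon)$ shape) to see that $I(R)$ is in fact bounded. I expect to carry this out by using the factorization of $\Phi$ near $R$ together with the expansion $A(r)=-r^{3/2}(2M-r)^{1/2}+O(r-R)$ along the turning locus, uniformly as $R\to2M$, splitting $[R,2M]$ into an inner region near $R$ and a region near $2M$ and estimating each; the coalescing-root estimate in the first limit will likewise require the analogous uniform control of the bounded factors near $r_H$.
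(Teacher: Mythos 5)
Your proposal is correct and follows essentially the same route as the paper's own proof: the paper also starts from (\ref{Vcoord}), proves $\int_R^{2M}\bar f'\,\mathrm{d}x\to\infty$ as $R\to r_H$ via the factorization $(Hr^3+c(R))^2+r^3(r-2M)=(r-R)^2P_3+(r-R)Q_5(R)$ with $Q_5(R)\to 0$ (exactly your coalescing-root, logarithmic-divergence mechanism), and for $R\to 2M$ establishes precisely your joint asymptotic shape, namely $\bar f'(r)\le 16M/\bigl((r-R)+(2M-R)^{\frac12}(r-R)^{\frac12}\bigr)$, so the integral stays bounded (by $32M\ln 2$) and the boundary term $2M\ln(2M-R)\to-\infty$ wins. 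The differences are only cosmetic: the paper bounds the integral uniformly rather than extracting your sharp limit $4M\ln 2$.
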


\begin{proof}
Recall that from (\ref{Vcoord}),
\begin{align*}
V(r;c)
=\mathrm{e}^{\frac1{4M}\left(r_{H,c}^++2M\ln|r_{H,c}^+-2M|+\int_{r_{H,c}^+}^r\bar{f}'(x)\,\mathrm{d}x\right)},
\end{align*}
where
\begin{align*}
\bar{f}'(r)=\frac{r^4}{(Hr^3+c)^2+r^3(r-2M)-(Hr^3+c)\sqrt{(Hr^3+c)^2+r^3(r-2M)}}.
\end{align*}
is given in (\ref{barf}). Notice that
{\small
\begin{align}
&\ (Hr^3+c(R))^2+r^3(r-2M) \notag \\
=&\ (r-R)^2P_3(r,R,\mbox{deg}=4)+(r-R)(2R^2)\left(-3HR^{\frac32}(2M-R)^{\frac12}-3M+2R\right) \label{eqn222}
\end{align}}
with $-3HR^{\frac32}(2M-R)^{\frac12}-3M+2R>0$ for $R>r_H$.
Let
\begin{align*}
Q_4(R)&=\max_{r\in[R,2M]}P_3(r,R,\mbox{deg}=4), \\
Q_5(R)&=2R^2(-3HR^{\frac32}(2M-R)^{\frac12}-3M+2R)>0, \\
m(R)&=\min_{r\in[R,2M]}\frac{r^4}{\sqrt{(Hr^3+c)^2+r^3(r-2M)}-(Hr^3+c)},\quad\mbox{and}\\
m&=\min_{R\in[r_H,2M]}m(R)>0.
\end{align*}
We have
\begin{align*}
\int_R^{2M}\bar{f}'(r,c(R))\,\mathrm{d}r
\geq&\ m\int_{R}^{2M}\frac{1}{\sqrt{(r-R)^2Q_4(R)+(r-R)Q_5(R)}}\,\mathrm{d}r.
\end{align*}
The quantity $Q_5\to 0$ and $Q_4$ bounded as $R\to r_H$ ($c\to c_H)$ imply $V(2M;c)\to\infty$ as $c\to c_H$.

Now we look at the case $c\to 0$, that is $R\to 2M$.
To study the limit $\lim\limits_{R\to 2M}\int_R^{2M}\bar{f}'(r,c(R))\,\mathrm{d}r$,
we need to estimate the denominator of $\bar{f}'(r)$. From (\ref{eqn222}) and
{\small
\begin{align*}
&\lim_{R\to 2M}\left(\min_{r\in[R,2M]}(r-R)P_3(r,R,\mathrm{deg}=4)+(2R^2)\left(-3HR^{\frac32}(2M-R)^{\frac12}-3M+2R\right)\right)\\
&=8M^3, \\
&\lim_{R\to 2M}\min_{r\in[R,2M]}\frac{-(Hr^3+c)}{(2M-R)^{\frac12}}
=\lim_{R\to 2M}\min_{r\in[R,2M]}\frac{-Hr^3+HR^3+R^{\frac32}(2M-R)^{\frac12}}{(2M-R)^{\frac12}} \\
&=(2M)^{\frac32},
\end{align*}}
for $R$ close to $2M$, we have the following estimate
{\small
\begin{align*}
\int_{R}^{2M}\bar{f}'(r,c(R))\mathrm{d}r\leq&\ \int_R^{2M}\frac{(2M)^4}{M^3(r-R)+M^3(2M-R)^{\frac12}(r-R)^{\frac12}}\,\mathrm{d}r \\
\leq&\ 16M\int_R^{2M}\frac{1}{(r-R)+(2M-R)^{\frac12}(r-R)^{\frac12}}\,\mathrm{d}r=32M\ln2.
\end{align*}}
Hence
\begin{align*}
\lim_{c\to 0}V(2M;c)
=\lim_{R\to 2M}\sqrt{2M-R}\ \mathrm{e}^{\frac1{4M}\left(R+\int_{R}^{2M}\bar{f}'(x)\,\mathrm{d}x\right)}=0.
\end{align*}
\end{proof}

\begin{prop}
The \TSC family $\Sigma_{H,-8M^3H>c>c_H}$ in region {\tt I$\!$I} pointwise converges to the cylindrical hypersurface $r=r_H$
as $c\to c_H$.
\end{prop}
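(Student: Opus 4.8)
The plan is to follow verbatim the argument used for the family $\Sigma^-_{H,c_H<c<0}$ in the preceding proposition, replacing the blow-up of $f(0;H,c)$ by the blow-up of $V(2M;c)$ established in Proposition~\ref{prop23}. By $T$-axisymmetry it suffices to work in the $U+V>0$ region, where the $V$-coordinate of a member $\Sigma^+_{H,c}$ is the one in (\ref{Vcoord}),
\begin{align*}
V(r;c)=\mathrm{e}^{\frac1{4M}\left(r_{H,c}^++2M\ln|r_{H,c}^+-2M|+\int_{r_{H,c}^+}^r\bar{f}'(x)\,\mathrm{d}x\right)},
\end{align*}
with $\bar{f}'$ as in (\ref{barf}). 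Since $\bar{f}'>0$, the map $r\mapsto V(r;c)$ is continuous and strictly increasing on $[r_{H,c}^+,2M)$ and is continuous in $c$; at the lower endpoint it equals $\sqrt{2M-r_{H,c}^+}\,\mathrm{e}^{\frac{r_{H,c}^+}{4M}}$, which tends to the finite number $\sqrt{2M-r_H}\,\mathrm{e}^{\frac{r_H}{4M}}$ as $c\to c_H$, while $V(2M;c)\to\infty$ by Proposition~\ref{prop23}.

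First I would fix a level $V_0>\sqrt{2M-r_H}\,\mathrm{e}^{\frac{r_H}{4M}}$. For $c$ close enough to $c_H$, monotonicity together with the two endpoint limits guarantees a unique $r(c,V_0)\in(r_{H,c}^+,2M)$ with $V(r(c,V_0);c)=V_0$, and the whole statement reduces to proving $\lim_{c\to c_H}r(c,V_0)=r_H$. Taking logarithms gives
\begin{align*}
4M\ln V_0=r_{H,c}^++2M\ln|r_{H,c}^+-2M|+\int_{r_{H,c}^+}^{r(c,V_0)}\bar{f}'(x)\,\mathrm{d}x.
\end{align*}
As $c\to c_H$ we have $r_{H,c}^+\to r_H$, so the first two terms converge and hence $\int_{r_{H,c}^+}^{r(c,V_0)}\bar{f}'\,\mathrm{d}x$ remains bounded.

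Next I would argue by contradiction as before. If $r(c,V_0)$ did not converge to $r_H$, say $r(c,V_0)\geq r_H+\delta$ along some sequence, then $\int_{r_{H,c}^+}^{r(c,V_0)}\bar{f}'\geq\int_{r_{H,c}^+}^{r_H+\delta}\bar{f}'$. The estimate in the proof of Proposition~\ref{prop23} bounds the integrand below by $m\big[(x-r_{H,c}^+)^2Q_4+(x-r_{H,c}^+)Q_5\big]^{-1/2}$ for a positive constant $m$, with $Q_5\to 0$ as $c\to c_H$, so the lower-endpoint singularity strengthens to a nonintegrable one and the truncated integral diverges. This contradicts the boundedness just obtained, forcing $r(c,V_0)\to r_H$. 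Finally, since $UV=(r-2M)\mathrm{e}^{\frac{r}{2M}}$, the point of $\Sigma^+_{H,c}$ at level $V_0$ has coordinates $\big((r(c,V_0)-2M)\mathrm{e}^{\frac{r(c,V_0)}{2M}}/V_0,\,V_0\big)$, converging to $\big((r_H-2M)\mathrm{e}^{\frac{r_H}{2M}}/V_0,\,V_0\big)$, the point of $r=r_H$ at the same level; this is the asserted pointwise convergence, and the $U+V<0$ part follows by $T$-axisymmetry.

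The step I expect to be the main obstacle is the \emph{localization} of the divergence: one must be sure the blow-up of $\int_{r_{H,c}^+}^{2M}\bar{f}'$ is produced entirely near the lower endpoint, so that it survives on the truncated interval $[r_{H,c}^+,r_H+\delta]$. This is exactly what the factorization $(Hr^3+c(R))^2+r^3(r-2M)=(r-R)^2P_3+(r-R)Q_5$ with $Q_5(r_H)=0$ furnishes, but I would record explicitly that the resulting integral $\int_{0}^{\epsilon}u^{-1/2}(uQ_4+Q_5)^{-1/2}\,\mathrm{d}u$ diverges like $-\log Q_5$ as $Q_5\to 0$, which pins the divergence to $u=0$ and thereby makes the contradiction effective.
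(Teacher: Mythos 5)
Your proposal is correct and takes essentially the same approach as the paper: fix a level set, locate the radius $r$ at which each hypersurface crosses it, and rule out $r\not\to r_H$ by the blow-up of the truncated integral near its lower endpoint, which comes from the factorization $(Hx^3+c(R))^2+x^3(x-2M)=(x-R)^2P_3+(x-R)Q_5$ with $Q_5\to 0$ and $Q_4$ bounded as in Proposition~\ref{prop23}. The only deviation is cosmetic: you fix a Kruskal level $V=V_0$ and bound $\int\bar f'$, while the paper fixes a Schwarzschild time level $t=t_0$ and bounds $\int f'$; your write-up does have the minor merit of making the localization of the divergence (the $-\log Q_5$ rate) explicit, which the paper leaves implicit.
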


\begin{proof}
Given $t_0>0$ and $c\in(c_H,-8M^3H)$, there uniquely exists $r_0=r_0(R)\in[R,2M)$ such that
\begin{align*}
t_0=\int_{R}^{r_0(R)}f'(x,c(R))\,\mathrm{d}x=\int_{R}^{r_0(R)}\frac{x}{x-2M}\frac{Hx^3+c}{\sqrt{(Hx^3+c)^2+x^3(x-2M)}}\,\mathrm{d}x.
\end{align*}
Letting $c$ tend to $c_H$, if $r_0(R)\not\to r_H$ then right hand side will be unbounded,
and it contradicts to the finite value of left hand side. Hence we have $r_0(R)\to r_H$.
\end{proof}

The case $\tilde{\Sigma}_{H,C_H>c>-8M^3H}^-$ can be treated similarly, and we can conclude that
\begin{thm}\label{localCMCfoliation34}
There exists a constant $C>0$ such that for any given $H\geq -C$, the \TSC family $\Sigma_{H,-8M^3H\geq c>c_H}^+$ forms a
foliation in region {\tt I$\!$I}, and for any given $H\leq C$, the \TSC family $\tilde{\Sigma}_{H,C_H>c>-8M^3H}^-$
forms a foliation in region {\tt I$\!$I'}.
\end{thm}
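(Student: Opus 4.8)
The plan is to assemble the theorem from the three propositions established in this subsection, in exactly the way Theorem~\ref{CMCfoliation1} is assembled from the corresponding results of section~\ref{CMCFinterior}. To foliate region {\tt I$\!$I} by the family $\Sigma^+_{H,-8M^3H\geq c>c_H}$ I must verify two things: that distinct leaves are disjoint, and that the leaves together cover every point between the inner cylinder $r=r_H$ and the outer leaf $\tilde{\Sigma}^-_{H,-8M^3H}$ through the origin.

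First I would settle disjointness. By $T$-axisymmetry it suffices to work in $U+V>0$, where $V>0$, so the hypothesis of Proposition~\ref{prop17} may be written as $\frac{\mathrm{d}\ln V(r';c)}{\mathrm{d}c}\leq 0$. The first proposition of this subsection supplies a constant $C>0$ for which $\frac{\mathrm{d}\ln V(2M;c)}{\mathrm{d}c}\leq 0$ whenever $H\geq -C$; choosing $r'=2M$ in Proposition~\ref{prop17} then forces the leaves to be pairwise disjoint for every $r\in[r_H,2M]$, hence throughout region {\tt I$\!$I}.

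Next I would establish the covering, following the pattern of Theorem~\ref{CMCfoliation1}. Proposition~\ref{prop23} shows $V(2M;c)\to\infty$ as $c\to c_H$ and $V(2M;c)\to 0$ as $c\to -8M^3H$, so at the horizon $r=2M$ the leaves sweep continuously and, by the monotonicity just established, strictly from the origin leaf up to arbitrarily large $V$; meanwhile the preceding proposition shows that the leaves converge to the inner cylinder $r=r_H$ as $c\to c_H$. These two limits pin the family between its two boundary hypersurfaces, and an intermediate-value argument in $c$ at any fixed radius $r_0\in(r_H,2M)$ places exactly one leaf through each point of region {\tt I$\!$I}. Together with disjointness this yields the foliation for $H\geq -C$.

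Finally, the family $\tilde{\Sigma}^-_{H,C_H>c>-8M^3H}$ in region {\tt I$\!$I'} is handled by the mirror argument: reflection across the $X$-axis interchanges regions {\tt I$\!$I} and {\tt I$\!$I'} and exchanges the roles of $U$ and $V$, so the three propositions have exact analogues with $U(2M;c)$ in place of $V(2M;c)$ and the sign of $H$ reversed, giving the foliation for $H\leq C$. The genuine obstacle, and the reason the statement is confined to $H\geq -C$ rather than to all $H$, is the sign of the numerator $H\cdot F(x,R)+G(x,R)$ on $[R,2M]$: for $H\geq 0$ one checks directly that $F$ and $G$ are negative through the cubic $\bar{F}(z,b)$, but for $H<0$ the term $H\cdot F$ reverses sign, and only a continuity and compactness argument secures the inequality on a neighborhood $H\geq -C$, without yielding an explicit value of $C$.
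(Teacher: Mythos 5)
Your proposal matches the paper's own proof of Theorem~\ref{localCMCfoliation34}: the paper obtains it by exactly this assembly of the three propositions of the subsection—the disjointness criterion $\frac{\mathrm{d}\ln V(2M;c)}{\mathrm{d}c}\leq 0$ for $H\geq -C$ fed into Proposition~\ref{prop17} with $r'=2M$, the limits $V(2M;c)\to\infty$ ($c\to c_H$) and $V(2M;c)\to 0$ ($c\to -8M^3H$) of Proposition~\ref{prop23} together with pointwise convergence to the cylinder $r=r_H$ for coverage, and the mirror (sign of $H$ reversed, $U$ in place of $V$) treatment of $\tilde{\Sigma}^-_{H,C_H>c>-8M^3H}$ in region {\tt I$\!$I'}. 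Your explanation of where the constant $C$ comes from (negativity of $H\cdot F+G$ on $[R,2M]$ checked directly for $H\geq 0$, then extended by continuity without an explicit value) is likewise the paper's reasoning.
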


\subsection{Maximal hypersurfaces foliation in the Kruskal extension}\label{Maxfoliation}
In this subsection, we will show that $T$-axisymmetric spacelike spherically symmetric maximal hypersurfaces form a foliation in the whole Kruskal extension.
From Theorem \ref{CMCfoliation1} and \ref{localCMCfoliation34}, we know that $\{\Sigma_{H=0}\}$ forms a foliation in region {\tt I$\!$I} and {\tt I$\!$I'}.
Using the above method, we can also prove that $\{\Sigma_{H=0}\}$ forms a foliation in region {\tt I} and {\tt I'}.
Thus it forms a foliation in the whole Kruskal extension.
This reproves the result of Beig and \'{O} Murchadha in \cite{BO}.

When $H=0$, from (\ref{k2function}) and (\ref{ktilde}), we have $k_{H}(r)=-\tilde{k}_{H}(r)$.
Since $C_{H}$ is the maximum value of $\tilde{k}_H(r)$ and $c_{H}$ is the minimum value of $k_{H}(r)$, we get $C_{H}=-c_{H}$.
So two hypersurfaces $\Sigma^+_{H=0,c}$ and $\tilde{\Sigma}^-_{H=0,-c}, c\in(c_H,0)$ are symmetric about the $X$-axis in the Kruskal extension.
To prove the maximal hypersurfaces foliation, it suffices to show the case of $\Sigma^+_{H=0,0\geq c>c_H}$.

Remark that Proposition~\ref{prop17} implies that in $\Sigma_{H=0,0>c>c_H}^+$,
if the limit $\lim\limits_{r\to\infty}\frac{\mathrm{d}\ln V(r,c)}{\mathrm{d}c}\leq 0$, then hypersurfaces are disjoint.
Referring to the computation of $\frac{\mathrm{d}\ln V(r,c)}{\mathrm{d}c}$ in Appendix \ref{app72} and \ref{app73}, we put $H=0$ in (\ref{criterion4}).
In this case, $a=\frac43$, and from (\ref{changevariable}) and (\ref{bdyterm}) we get
{\small
\begin{align}
&\ 4M(2R-3M)\frac{\mathrm{d}\ln V(r;c(R))}{\mathrm{d}c} \notag\\
=&\int_1^{\frac{r}R}\frac{z^2\left(\left(2-\frac32b\right)z-\frac32b\right)}{(z-1)^{\frac12}\left(z^3+z^2+z+1-b(z^2+z+1)\right)^{\frac32}}\,\mathrm{d}z
-\left.\frac{1}{\sqrt{(b-1)+z^3(z-b)}}\right|_{z=\frac{r}R}. \label{maxfoliationcase}
\end{align}}
Consider the limit of (\ref{maxfoliationcase}) as $r$ tends to infinity and let $y=z-1$:
\begin{align}
&\ \int_1^\infty\frac{z^2\left(\left(2-\frac32b\right)z-\frac32b\right)}{(z-1)^{\frac12}\left((z^3+z^2+z+1)-b(z^2+z+1)\right)^{\frac32}}\,\mathrm{d}z \notag\\
=&\ \int_0^\infty\frac{\left(2-\frac32b\right)y^3+(6-6b)y^2
+\left(6-\frac{15}2b\right)y+(2-3b)}{y^{\frac12}\left(y^3+(4-b)y^2+(6-3b)y+(4-3b)\right)^{\frac32}}\,\mathrm{d}y. \label{maxestimate}
\end{align}
In this case, $1\leq b\leq\frac43$, and the denominator can be bounded by
\begin{align*}
y(y+1)^2\leq y^3+(4-b)y^2+(6-3b)y+(4-3b)\leq (y+1)^3.
\end{align*}
Hence (\ref{maxestimate}) has the following estimate:
{\small
\begin{align*}
&\ \int_0^\infty\frac{\left(2-\frac32b\right)y^3+(6-6b)y^2
+\left(6-\frac{15}2b\right)y+(2-3b)}{y^{\frac12}\left(y^3+(4-b)y^2+(6-3b)y+(4-3b)\right)^{\frac32}}\,\mathrm{d}y \\
\leq&\ \int_0^\infty\left(\frac{\left(2-\frac32b\right)y^3}{y^{\frac12}(y(y+1)^2)^{\frac32}}+\frac{(6-6b)y^2}{y^{\frac12}((y+1)^3)^{\frac32}}
+\frac{\left(6-\frac{15}2b\right)y}{y^{\frac12}((y+1)^3)^{\frac32}}+\frac{2-3b}{y^{\frac12}((y+1)^3)^{\frac32}}\right)\mathrm{d}y \\
=&\ \frac12\left(2-\frac32b\right)+\frac4{35}(6-6b)+\frac{16}{105}\left(6-\frac{15}2b\right)+\frac{32}{35}(2-3b) \\
=&\ \frac{31}7-\frac{149}{28}b\leq-\frac{25}{28}<0.
\end{align*}}
So we have $\lim\limits_{r\to\infty}\frac{\mathrm{d}\ln V(r;c(R))}{{\mathrm{d}}c}<0$, which means all maximal hypersurfaces are disjoint.

To show these maximal hypersurfaces cover the whole Kruskal extension, it suffices to show that for all fixed $r>2M$, $\lim\limits_{c\to c_H}V(r;c)=\infty$
and $\lim\limits_{c\to 0}\frac{U(r;c)}{V(r;c)}=1$.
Since $V(r;c)\geq V(2M;c)$ and $V(2M;c)\to\infty$ as $c\to c_{H}$ by Proposition \ref{prop23}, we get $\lim\limits_{c\to c_H}V(r;c)=\infty$.

From (\ref{barf}), (\ref{Vcoord}), and $c=-R^{\frac32}(2M-R)^{\frac12}$, we have
\begin{align*}
\frac{U(r;c(R))}{V(r;c(R))}
&=\mathrm{e}^{\frac1{2M}(r+2M\ln|r-2M|-R-2M\ln|R-2M|-\int_R^r\bar{f}'(x)\,\mathrm{d}x)} \\
&=\mathrm{e}^{\frac1{2M}(\int_R^r(1+\frac{2M}{x-2M}-\bar{f}'(x,c(R)))\,\mathrm{d}x)} \\
&=\mathrm{e}^{\frac{R^{\frac32}(2M-R)^{\frac12}}{2M}\left(\int_R^r\frac{x}{(x-2M)\sqrt{x-R}\sqrt{x^3-(2M-R)(x^2+Rx+R^2)}}\,\mathrm{d}x\right)}.
\end{align*}
The positive function $\frac{x}{\sqrt{x^3-(2M-R)(x^2+Rx+R^2)}}$ is bounded by $\frac{4}{\sqrt{3M}}$ as long as $R\geq\frac{7M}4$ and $x>R$.
We also have
\begin{align*}
\int\frac{1}{(x-2M)\sqrt{x-R}}\mathrm{d}x=\frac1{\sqrt{2M-R}}\ln\left|\frac{\sqrt{x-R}-\sqrt{2M-R}}{\sqrt{x-R}+\sqrt{2M-R}}\right|+C.
\end{align*}
Hence
\begin{align*}
1\leq\lim_{c\to 0}\frac{U(r;c(R))}{V(r;c(R))}
&\leq\lim_{R\to 2M}\mathrm{e}^{\frac{R^{\frac32}}{2M}\left.\ln\left|\frac{\sqrt{x-R}-\sqrt{2M-R}}{\sqrt{x-R}+\sqrt{2M-R}}\right|\right|_{x=R}^{x=r}} \\
&=\lim_{R\to 2M}\left|\frac{\sqrt{r-R}-\sqrt{2M-R}}{\sqrt{r-R}+\sqrt{2M-R}}\right|^{\frac{R^{\frac32}}{2M}}=1.
\end{align*}
In conclusion, we get the foliation theorem.
\begin{thm}
If $H=0$, the foliation Conjecture \ref{conj} is true.
\end{thm}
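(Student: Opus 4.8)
The plan is to exploit the extra reflection symmetry available only at $H=0$ to cut the four families of Conjecture~\ref{conj} down to essentially one, and then feed this single family into the disjointness criterion of Proposition~\ref{prop17} together with boundary estimates. Since $k_{0}(r)=-\tilde{k}_{0}(r)$ forces $C_{0}=-c_{0}$ and identifies $\Sigma^{+}_{0,c}$ with the $X$-reflection of $\tilde{\Sigma}^{-}_{0,-c}$, the two interior families $\Sigma^{-}_{0,c_{0}\le c<0}$ and $\tilde{\Sigma}^{+}_{0,0<c\le C_{0}}$ are already handled by Theorems~\ref{CMCfoliation1} and~\ref{CMCfoliation2}, which hold for every $H$ and in particular foliate regions {\tt I$\!$I} and {\tt I$\!$I'} between $r=0$ and the cylinders $r=r_{0}$, $r=R_{0}$. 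It therefore suffices to prove that $\Sigma^{+}_{0,0\ge c>c_{0}}$ foliates the portion of region {\tt I} it sweeps; the mirror family $\tilde{\Sigma}^{-}_{0,C_{0}>c>0}$ then covers the symmetric portion, and the barrier cylinders $r=r_{0}$, $r=R_{0}$ keep the inner and outer families from colliding.

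For disjointness I would invoke Proposition~\ref{prop17} in the limit $r\to\infty$: it is enough to show $\lim_{r\to\infty}\frac{\mathrm{d}\ln V(r;c)}{\mathrm{d}c}\le 0$ on this family. Specializing the Appendix expression for $\frac{\mathrm{d}\ln V(r;c(R))}{\mathrm{d}c}$ to $H=0$ collapses the parameter to $a=\frac43$ and sends the boundary term to $0$ as $r\to\infty$, leaving an improper integral in $y=z-1$. The integrand's denominator is a cubic that I would sandwich between $y(y+1)^{2}$ and $(y+1)^{3}$ for $1\le b\le\frac43$; bounding the four numerator monomials term by term turns the integral into a finite sum of elementary Beta-type integrals, which evaluates to $\frac{31}{7}-\frac{149}{28}b\le-\frac{25}{28}<0$. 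This strict negativity is exactly the hypothesis of Proposition~\ref{prop17}, so the outer family is disjoint.

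For coverage I would track the two ends of the parameter range. As $c\to c_{0}$, the monotonicity $V(r;c)\ge V(2M;c)$ together with $\lim_{c\to c_{0}}V(2M;c)=\infty$ from Proposition~\ref{prop23} forces $V(r;c)\to\infty$ for every fixed $r>2M$, so the slices sweep upward without leaving a gap next to $r=r_{0}$. As $c\to 0$, a direct estimate of $\frac{U(r;c)}{V(r;c)}$ using $c=-R^{\frac32}(2M-R)^{\frac12}$ and the explicit antiderivative of $\frac{1}{(x-2M)\sqrt{x-R}}$ gives $\lim_{c\to0}\frac{U(r;c)}{V(r;c)}=1$, i.e. the family limits onto the $X$-axis, which is precisely the $c=0$ maximal slice. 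By continuity in $c$ and the intermediate value theorem, every point between the $X$-axis and $r=r_{0}$ lies on exactly one member of $\Sigma^{+}_{0,0\ge c>c_{0}}$. Combining this with the mirror family and the two interior foliations exhausts the whole Kruskal extension, which is Conjecture~\ref{conj} at $H=0$.

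The main obstacle is the disjointness estimate in region {\tt I}, not the interior foliations or the coverage limits. For general $H$ the sign of $\lim_{r\to\infty}\frac{\mathrm{d}\ln V(r;c)}{\mathrm{d}c}$ is genuinely delicate, which is why regions {\tt I}, {\tt I'} are left open for $H\neq0$. The reason $H=0$ is tractable is structural: killing the $Hr^{3}$ terms makes the denominator of the criterion a clean cubic admitting the $y(y+1)^{2}$--$(y+1)^{3}$ sandwich, so the improper integral reduces to an exactly computable negative constant rather than an $H$-dependent expression whose sign is unclear. Once this single inequality is in hand, the remainder is bookkeeping with the reflection symmetry and the previously established theorems.
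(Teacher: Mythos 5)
Your proposal reproduces the paper's own proof essentially step for step: the $X$-axis reflection symmetry coming from $k_{0}(r)=-\tilde{k}_{0}(r)$, disjointness of the outer family via Proposition~\ref{prop17} in the limit $r\to\infty$, the substitution $y=z-1$ with the sandwich
\begin{align*}
y(y+1)^{2}\leq y^{3}+(4-b)y^{2}+(6-3b)y+(4-3b)\leq (y+1)^{3},\qquad 1\leq b\leq\tfrac43,
\end{align*}
leading to the bound $\frac{31}{7}-\frac{149}{28}b\leq-\frac{25}{28}<0$, and the coverage limits $\lim_{c\to c_{H}}V(r;c)=\infty$ (via $V(r;c)\geq V(2M;c)$ and Proposition~\ref{prop23}) together with $\lim_{c\to 0}U(r;c)/V(r;c)=1$. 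That core is correct and identical to the paper's section on maximal hypersurfaces.

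The one genuine gap is in your reduction step. Theorems~\ref{CMCfoliation1} and~\ref{CMCfoliation2} foliate only the portions of regions {\tt I$\!$I} and {\tt I$\!$I'} between $r=0$ and the cylinders $r=r_{H}$, $r=R_{H}$, so it does \emph{not} suffice to treat ``the portion of region {\tt I}'' swept by $\Sigma^{+}_{H=0,\,0\geq c>c_{H}}$: each such hypersurface with $c_{H}<c<0$ passes through regions {\tt I}, {\tt I$\!$I}, and {\tt I'}, and the family must also foliate the band $r_{H}<r<2M$ of region {\tt I$\!$I} (and, after $X$-axis reflection, the band $R_{H}<r<2M$ of region {\tt I$\!$I'}), which lies in neither region {\tt I} nor the range of the two theorems you cite. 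Disjointness on these bands does follow from your $r\to\infty$ criterion, but coverage does not: your intermediate-value argument rests on the inequality $V(r;c)\geq V(2M;c)$, which holds because $\bar{f}'>0$ makes $V$ increasing in $r$, and therefore goes the wrong way when $r<2M$, while the limit $U/V\to 1$ is a statement about points with $r>2M$ only. The paper closes exactly this hole by invoking Theorem~\ref{localCMCfoliation34}, which is valid at $H=0$, and whose proof (Proposition~\ref{prop23} together with the pointwise convergence of the family to the cylinder $r=r_{H}$) supplies the coverage of those bands. Supplementing your citation of Theorem~\ref{CMCfoliation2} with Theorem~\ref{localCMCfoliation34} repairs the argument; everything else matches the paper.
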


\appendix
\appendixpage
\section{Formula of $\frac{\mathrm{d}\ln V(r;c(R))}{\mathrm{d}c}$ in $\Sigma^-_{H,c_H<c<0}$} \label{app71}
The aim of the appendices \ref{app71} and \ref{app72} is to derive the formula of $\frac{\mathrm{d}\ln V(r;c(R))}{\mathrm{d}c}$.
First of all, we discuss \TSC hypersurfaces in $\Sigma^-_{H,c_H<c<0}$.
For $r\in[0,R)$, by the chain rule, we have
\begin{align*}
\frac{\mathrm{d}\ln V(r;c(R))}{\mathrm{d}c}
=\frac{\mathrm{d}\ln V(r;c(R))}{\mathrm{d}R}\frac{\mathrm{d}R}{\mathrm{d}c}
=\frac1{4M}\left(\frac{\mathrm{d}}{\mathrm{d}R}\int_R^rf'(x;c(R))\,\mathrm{d}x\right)\frac{\mathrm{d}R}{\mathrm{d}c},
\end{align*}
where
{\small
\begin{align*}
f'(x,c(R))=\frac1{h(x)}\sqrt{\frac{l^2(x,c(R))}{l^2(x,c(R))-1}}\quad\mbox{and}\quad l(x,c(R))=\frac1{\sqrt{-h(R)}}\left(-Hx-\frac{c(R)}{x^2}\right)
\end{align*}}
from (\ref{f2negative}) and $c(R)=-HR^3-R^{\frac32}(2M-R)^{\frac12}$.
Some rearrangements give
\begin{align*}
\frac{\mathrm{d}}{\mathrm{d}R}\int_R^rf'(x;c(R))\mathrm{d}x
&=\frac{\mathrm{d}}{\mathrm{d}R}\int_r^R\frac{A(x,R)}{h(x)\sqrt{A^2(x,R)+B(x)}}\,\mathrm{d}x \\
&=\frac{\mathrm{d}}{\mathrm{d}R}\int_r^R\frac{A(x,R)}{h(x)\sqrt{(R-x)P(x,R)}}\,\mathrm{d}x,
\end{align*}
where
$A(x,R)=Hx^3-HR^3-R^{\frac32}(2M-R)^{\frac12}, \, B(x)=x^3(x-2M)$, and  $P(x,R)\neq 0$.
Since $\int_R^rf'(x;c(R))\mathrm{d}x$ is an improper integral, we have to be careful.
For $\varepsilon>0$, define
\begin{align*}
\phi_{\varepsilon}(R)=\int_r^{R-\varepsilon}\frac{A(x,R)}{h(x)\sqrt{A^2(x,R)+B(x)}}\,\mathrm{d}x
=\int_r^{R-\varepsilon}\frac{A(x,R)}{h(x)\sqrt{(R-x)P(x,R)}}\,\mathrm{d}x.
\end{align*}
By the fundamental theorem of calculus, we have
{\small
\begin{align}
&\frac{\mathrm{d}\phi_{\varepsilon}(R)}{\mathrm{d}R} \notag \\
=\ &\frac{A(R-\varepsilon,R)}{h(R-\varepsilon)\sqrt{A^2(R-\varepsilon,R)+B(R-\varepsilon)}}
+\int_r^{R-\varepsilon}\frac{1}{h(x)}
\frac{\mathrm{d}}{\mathrm{d}R}\left(\frac{A(x,R)}{\sqrt{A^2(x,R)+B(x)}}\right)\mathrm{d}x \notag \\
=\ &\frac{1}{h(R-\varepsilon)}\int_r^{R-\varepsilon}
\left[\begin{array}{l}
\frac{\mathrm{d}}{\mathrm{d}x}\left(\frac{A(x,R)}{\sqrt{A^2(x,R)+B(x)}}\right)
+\frac{h(R-\varepsilon)}{h(x)}\frac{\mathrm{d}}{\mathrm{d}R}\left(\frac{A(x,R)}{\sqrt{A^2(x,R)+B(x)}}\right)
\end{array}\right]\mathrm{d}x \notag \\
&+\frac{1}{h(R-\varepsilon)}\frac{A(r,R)}{\sqrt{A^2(r,R)+B(r)}}. \label{formulaphi}
\end{align}}
A direct computation shows that the terms with order $(R-x)^{-\frac32}$ in the integrand all have $\varepsilon$ in their coefficients.
Therefore $\frac{\mathrm{d}\phi_{\varepsilon}(R)}{\mathrm{d}R}$ converges uniformly and it implies that
{\small
\begin{align*}
&\frac{\mathrm{d}}{\mathrm{d}R}\int_R^rf'(x;c(R))\,\mathrm{d}x \notag \\
=\ &\frac{1}{h(R)}\int_{r}^{R}
\frac{H\cdot F(x,R)+G(x,R)}
{(R-x)^{\frac12}(P(x,R))^{\frac32}}\,\mathrm{d}x+\frac{1}{h(R)}\frac{A(r,R)}{\sqrt{A^2(r,R)+B(r)}},
\end{align*}}
where
\begin{align}
F(x,R)&=x^2(-3x^2(x+R-2M)+(2x-3M)(x^2+Rx+R^2)) \notag\\
&=x^2((3M-x)(x^2-R^2)+xR(R-3M-x)) \label{Ffunction}\\
&=x^2(-x^3+(3M-R)x^2+(2R^2-3MR)x-3MR^2), \label{Ffunction2}\\
G(x,R)&=x^2\sqrt{-h(R)}(x(R-3M)+R(x-3M)). \label{Gfunction}
\end{align}

The function $c(R)=-HR^3-R^{\frac32}(2M-R)^{\frac12}$ implies
\begin{align}
\frac{\mathrm{d}R}{\mathrm{d}c}=\frac{\sqrt{-h(R)}}{-3HR^{\frac32}(2M-R)+(2R-3M)} \label{diffRc}
\end{align}
and we denote
\begin{align}
J(R)=-3HR^{\frac32}(2M-R)+(2R-3M). \label{JR}
\end{align}
In conclusion, we have
\begin{align}
&\ \frac{\mathrm{d}\ln V(r;c(R))}{\mathrm{d}c} \notag \\
=&-\frac1{4MJ(R)\sqrt{-h(R)}}\left(\int_{r}^{R}\frac{H\cdot F(x,R)+G(x,R)}{(R-x)^{\frac12}(P(x,R))^{\frac32}}\,\mathrm{d}x
+\frac{A(r,R)}{\sqrt{A^2(r,R)+B(r)}}\right).
\label{VformulaII}
\end{align}
Taking $r=0$ in (\ref{VformulaII}) and $\frac{A(0,R)}{\sqrt{A^2(0,R)+B(0)}}=-1$ give
\begin{align}
\frac{\mathrm{d}\ln V(0;c(R))}{\mathrm{d}c}
=-\frac1{4MJ(R)\sqrt{-h(R)}}\left(\int_{0}^{R}\frac{H\cdot F(x,R)+G(x,R)}{(R-x)^{\frac12}(P(x,R))^{\frac32}}\,\mathrm{d}x-1\right)
\label{VformulaII0}
\end{align}
The criteria in Proposition~\ref{prop15} implies that if
$\frac{\mathrm{d}\ln V(0;c)}{\mathrm{d}c}=\lim\limits_{r\to 0}\frac{\mathrm{d}\ln V(r;c)}{\mathrm{d}c}\leq 0$,
then hypersurfaces in $\Sigma_{H,c_H<c<0}^-$ are disjoint.
Remark that $\frac{\mathrm{d}R}{\mathrm{d}c}<0$ for $\Sigma_{H,c_H<c<0}^-$, which implies $J(R)<0$.

\section{Formula of $\frac{\mathrm{d}\ln V(r;c(R))}{\mathrm{d}c}$ in $\Sigma_{H,-8M^3H>c>c_H}^+$} \label{app72}
Next, we consider \TSC hypersurfaces in $\Sigma_{H,-8M^3H>c>c_H}^+$.
For $r\in(R,\infty)$, by the chain rule, we have $\frac{\mathrm{d}\ln V(r;c(R))}{\mathrm{d}c}
=\frac{\mathrm{d}\ln V(r;c(R))}{\mathrm{d}R}\frac{\mathrm{d}R}{\mathrm{d}c}$ and formula (\ref{Vcoord}) implies
\begin{align*}
&\ 4M\frac{\mathrm{d}\ln V(r;c(R))}{\mathrm{d}R}
=\lim_{\varepsilon\to 0}\frac{\mathrm{d}}{\mathrm{d}R}\left(\int_{R+\varepsilon}^r\bar{f}'(x;R)\,\mathrm{d}x+R+2M\ln|R-2M|\right).
\end{align*}
Since $\bar{f}'$ is smooth on $r\in[R+\varepsilon,r]$, we can use the fundamental theorem of calculus to get
\begin{align*}
&\ \ 4M\frac{\mathrm{d}\ln V(r;c(R))}{\mathrm{d}R} \\
=&\ \lim_{\varepsilon\to 0}\left(-\bar{f}'(R+\varepsilon;R)
+\int_{R+\varepsilon}^r\frac{\mathrm{d}}{\mathrm{d}R}\bar{f}'(x;R)\,\mathrm{d}x+\frac1{h(R+\varepsilon)}\right) \\
=&\ \lim_{\varepsilon\to 0}\left(-f'(R+\varepsilon;R)+\int_{R+\varepsilon}^r\frac{\mathrm{d}}{\mathrm{d}R}f'(x;R)\,\mathrm{d}x\right) \\
=&\ \lim_{\varepsilon\to 0}\left[
\begin{array}{l}
-\frac{A(R+\varepsilon,R)}{h(R+\varepsilon)\sqrt{A^2(R+\varepsilon,R)+B(R+\varepsilon)}}
+\int_{R+\varepsilon}^r\frac{1}{h(x)}
\frac{\mathrm{d}}{\mathrm{d}R}\left(\frac{A(x,R)}{\sqrt{A^2(x,R)+B(x)}}\right)\mathrm{d}x
\end{array}\right]
\end{align*}

Similar to the formula (\ref{formulaphi}) and its argument, we get
\begin{align*}
&\ 4M\frac{\mathrm{d}\ln V(r;c(R))}{\mathrm{d}R} \\
=&-\frac{1}{h(R)}\int_R^r\frac{H\cdot F(x,R)+G(x,R)}{(R-x)^{\frac12}(P(x,R))^{\frac32}}\,\mathrm{d}x-\frac{1}{h(R)}\frac{A(r,R)}{\sqrt{A^2(r,R)+B(r)}},
\end{align*}
where $F(x,R),G(x,R)$ are as (\ref{Ffunction}) and (\ref{Gfunction}), respectively.
From (\ref{diffRc}), we get
\begin{align}
&\ \frac{\mathrm{d}\ln V(r;c(R))}{\mathrm{d}c} \notag \\
=&\ \frac{1}{4M J(R)\sqrt{-h(R)}}\left(\int_{R}^r\frac{H\cdot F(x,R)+G(x,R)}
{(x-R)^{\frac12}(P(x,R))^{\frac32}}\,\mathrm{d}x+\frac{A(r,R)}{\sqrt{A^2(r,R)+B(r)}}\right), \label{criterion4}
\end{align}
where $J(R)=-3HR^{\frac32}(2M-R)+(2R-3M)$.
In particular, taking $r=2M$, formula (\ref{criterion4}) becomes
\begin{align}
\frac{\mathrm{d}\ln V(2M;c(R))}{\mathrm{d}c}
=\frac{1}{4M J(R)\sqrt{-h(R)}}\left(\int_{R}^{2M}\frac{H\cdot F(x,R)+G(x,R)}
{(x-R)^{\frac12}(P(x,R))^{\frac32}}\,\mathrm{d}x-1\right). \label{criterion2M}
\end{align}
Remark that Proposition~\ref{prop17} implies that in $\Sigma_{H,-8M^3H>c>c_H}^+$,
if the limit $\lim\limits_{r\to\infty}\frac{\mathrm{d}\ln V(r,c)}{\mathrm{d}c}\leq 0$,
then hypersurfaces are disjoint.
Furthermore, we have $\frac{\mathrm{d}R}{\mathrm{d}c}>0$, which implies $J(R)>0$.

\section{Change of variables} \label{app73}
We shall use the following change of variables for better control.
Define $a=\frac{2M}{r_H}$, $b=\frac{2M}{R}$, and $z=\frac{x}{R}$.
Then $h(R)=1-b$ and
{\small
\begin{align}
HR
=\frac{2r_H-3M}{3\sqrt{r_H^3(2M-r_H)}}R
=\frac{R}{r_H}\frac1{\sqrt{\frac{2M}{r_H}-1}}\left(\frac{4-\frac{6M}{r_H}}6\right)
=\frac{a}b\frac1{\sqrt{a-1}}\left(\frac{4-3a}6\right). \label{HRformula}
\end{align}}
Furthermore, we have
{\small
\begin{align*}
H\cdot F(x,R)&=\frac{a}b\frac{R^4z^2}{\sqrt{a-1}}\left(\frac{4-3a}6\right)\left(-3z^2(z+1-b)+\left(2z-\frac32b\right)(z^2+z+1)\right), \\
G(x,R)&=R^4z^2\sqrt{b-1}\left(z\left(1-\frac32b\right)+\left(z-\frac32b\right)\right),
\end{align*}
and
\begin{align*}
&\ P(x,R) \\
=&\ R^3\left(
\begin{array}{l}
\left(\frac{a(4-3a)}{6b(a-1)^{\frac12}}\right)^2(z-1)(z^2+z+1)^2-\frac{a(4-3a)}{3b(a-1)^{\frac12}}(z^2+z+1)(b-1)^{\frac12} \\
+(z^3+z^2+z+1)-b(z^2+z+1).
\end{array}
\right)
\end{align*}}
Hence
\begin{align}
\int_{R}^r\frac{H\cdot F(x,R)+G(x,R)}{(x-R)^{\frac12}(P(x,R))^{\frac32}}\,\mathrm{d}x
=\int_{1}^{\frac{r}R}\frac{\tilde{F}(z,a,b)+\tilde{G}(z,b)}{(z-1)^{\frac12}(\tilde{P}(z,a,b))^{\frac32}}\,\mathrm{d}z, \label{changevariable}
\end{align}
where
\begin{align*}
\tilde{F}(z,a,b)&=\frac{a(4-3a)}{6b(a-1)^{\frac12}}z^2\left(-3z^2(z+1-b)+\left(2z-\frac32b\right)(z^2+z+1)\right), \\
\tilde{G}(z,b)&=z^2(b-1)^{\frac12}\left(\left(2-\frac32b\right)z-\frac32b\right),
\end{align*}
and
\begin{align*}
&\ \tilde{P}(z,a,b)\\
=&\left(\frac{a(4-3a)}{6b(a-1)^{\frac12}}\right)^2(z-1)(z^2+z+1)^2-\frac{a(4-3a)}{3b(a-1)^{\frac12}}(z^2+z+1)(b-1)^{\frac12} \\
&+(z^3+z^2+z+1)-b(z^2+z+1).
\end{align*}
We also have
\begin{align}
\frac{A(r,R)}{\sqrt{A^2(r,R)+B(r)}}
&=\left.\frac{A(x,R)}{\sqrt{A^2(x,R)+B(x)}}\right|_{x=r} \notag \\
&=\left.\frac{\frac{a(4-3a)}{6b(a-1)^{\frac12}}(z^3-1)-(b-1)^{\frac12}}
{\sqrt{\left(\frac{a(4-3a)}{6b(a-1)^{\frac12}}(z^3-1)-(b-1)^{\frac12}\right)^2+z^3(z-b)}}\right|_{z=\frac{r}R}. \label{bdyterm}
\end{align}
These change of variables are helpful to get better estimates on the criteria (\ref{VformulaII0}) and (\ref{criterion4}).

\subsection*{Acknowledgment}
The authors would like to thank Quo-Shin Chi, Mao-Pei Tsui, and Mu-Tao Wang for their interests and discussions.
The first author also likes to express his gratitude to Robert Bartnik, Pengzi Miao and Todd Oliynyk for helpful suggestions
and hospitality when he visited Monash University.
The first author is supported by the NSC research grant 101-2917-I-564-005 and
the second author is partially supported by the NSC research grant 99-2115-M-002-008 in Taiwan.
We are also grateful to Zhuo-Bin Liang for useful comments.

\end{document}